\theoremstyle{plain}
\newtheorem{theorem}{Theorem}[section]
\newtheorem{lemma}[theorem]{Lemma}
\newtheorem{corollary}[theorem]{Corollary}
\newtheorem{proposition}[theorem]{Proposition}
\theoremstyle{definition}
\newtheorem{definition}[theorem]{Definition}
\newtheorem{definition-theorem}[theorem]{Definition-Theorem}
\newtheorem{example}[theorem]{Example}
\theoremstyle{remark}
\newtheorem{remark}[theorem]{Remark}
\def\Im{\mathrm{Im}}
\def\Id{\mathrm{Id}}
\def\ch{\mathrm{ch}}
\def\cF{\mathcal{F}}
\def\cE{\mathcal{E}}
\def\cO{\mathcal{O}}
\def\cG{\mathcal{G}}
\def\cT{\mathcal{T}}
\def\cK{\mathcal{K}}
\def\RR{\mathbb{R}}
\def\PP{\mathbb{P}}
\def\del{\partial}
\def\R{\mathbb{R}}
\def\Q{\mathbb{Q}}
\def\Z{\mathbb{Z}}
\def\N{\mathbb{N}}
\def\P{\mathbb{P}}
\def\C{\mathbb{C}}
\def\Om{\Omega}
\def\>{\rangle}
\def\<{\langle}
\def\>{\rangle}
\def\Hom{\mathrm{Hom}}
\def\End{\mathrm{End}}
\def\Spec{\mathrm{Spec}}
\def\rk{\mathrm{rk}}
\def\deg{\mathrm{deg}}
\def\cl{\mathrm{cl}}
\def\codim{\mathrm{codim}}
\def\arccot{\mathrm{arccot}}
\def\Pic{\mathrm{Pic}}
\def\rank{\mathrm{rank}}
\def\dim{\mathrm{dim}}
\def\Eff{\mathrm{Eff}}
\def\Cone{\mathrm{Cone}}
\newcommand{\m}[1]{\mathcal{#1}}%
\newcommand{\mrm}[1]{\mathrm{#1}}%
\newcommand{\Alt}{\mathcal{A}}
\DeclareMathOperator{\Tr}{Tr}
\DeclareMathOperator{\range}{ran}
\newcommand{\delbar}{\bar{\partial}}%
\newcommand{\dd}{\mathrm{d}}%
\newcommand{\I}{\mkern1mu\mathrm{i}\mkern1mu}
\newcommand{\e}{\mathrm{e}}
\newcommand{\Todd}{\mathrm{Td}}
\newcommand{\chern}{\mathrm{c}}
\newcommand{\Chern}{\mathrm{ch}}
\newcommand{\BC}{\mathrm{BC}}
\newcommand{\curvform}{\mathcal{F}}
\newcommand{\Zch}{Z}
\newcommand{\sym}{\mathrm{sym}}
\newcommand{\Herm}{\mathcal{H}}
\newcommand{\HermPos}{\mathcal{H}^+}
\newcommand{\Holstr}{\mathcal{A}}
\newcommand{\PLagr}{\mathcal{P}}
\DeclarePairedDelimiter\norm{\lVert}{\rVert}
\DeclarePairedDelimiter{\set}{\{}{\}}
\newcommand{\suchthat}{\mathrel{}\mathclose{}\middle|\mathopen{}\mathrel{}}
\renewcommand{\gg}{>\!\!>}
\numberwithin{equation}{section}
\title[P-stability, P-positivity, equivariance and blow-ups]
{Polynomial stability conditions for vector bundles:\\ positivity, equivariance and blow-ups}
\author[R. Delloque]{R\'emi Delloque}
\address{R\'emi Delloque, Univ Brest, UMR CNRS 6205, Laboratoire de Math\'ematiques de Bretagne Atlantique, France}
\email{remi.delloque@univ-brest.fr}
\author[A. Napame]{Achim Napame}
\address{Achim Napame, Instituto de Matem{\'a}tica, Estat{\'i}stica e Computa{\c c}{\~a}o Cient{\'i}fica - UNICAMP, Rua S{\'e}rgio Buarque de Holanda 651, 13083-970 Campinas-SP, Brazil}
\email{achim@unicamp.br}
\author[C. Scarpa]{Carlo Scarpa}
\address{Carlo Scarpa, Universit\'e Claude Bernard Lyon 1, Institut Camille Jordan, 21 av. Claude Bernard, 69100 Villeurbanne, France}
\email{scarpa@math.univ-lyon1.fr}
\author[C. Tipler]{Carl Tipler}
\address{Carl Tipler, Univ. Brest, UMR CNRS 6205, Laboratoire de Math\'ematiques de Bretagne Atlantique, France}
\email{carl.tipler@univ-brest.fr}
\begin{document}

\begin{abstract}
    We introduce the notion of $P$-critical connections for hermitian holomorphic vector bundles over compact balanced manifolds: integrable hermitian connections whose curvature solve a polynomial equation. Such connections include HYM and dHYM connections, as well as solutions to higher rank Monge--Amp\`ere or $J$-equations, and are a slight generalisation of Dervan--McCarthy--Sektnan's $Z$-critical connections motivated by Bayer's polynomial Bridgeland stability conditions. The associated equations come with a moment map interpretation, and we provide numerical conditions that are expected to characterise existence of solutions in suitable cases: $P$-positivity and $P$-stability. We then provide some devices to check those numerical conditions in practice. First, we observe that $P$-positivity is equivalent to its equivariant version over $T$-varieties.  In the toric case, we thus obtain an explicit finite set of subvarieties to test $P$-positivity on, independently on the choice of the polynomial equation. We also introduce equivariant $P$-stability and discuss its relation to $P$-stability. Secondly, we show that a uniform version of $P$-positivity is preserved by pulling back along a blow-up of points. We apply those results to some examples, such as blow-ups of Hirzebruch surfaces, or a Fano $3$-fold.
\end{abstract}

\maketitle

\setcounter{tocdepth}{1}
\tableofcontents

\section{Introduction}\label{sec:intro}

Given a simple holomorphic vector bundle on a compact K\"ahler manifold, the famous Hitchin-Kobayashi correspondence \cite{Donaldson85,uhlenbeckyau86} gives an equivalence between the existence of Hermitian metrics whose Chern connection solves the Hermitian Yang-Mills equation, and the Mumford-Takemoto stability of the bundle. The far-reaching consequences of this correspondence include tools for studying moduli problems of vector bundles, hyperk\"ahler varieties, and the topology of complex surfaces, among others.

On the algebraic side of this correspondence, it has long been clear that other stability conditions might be better suited to the study of moduli problems, and wall-crossing phenomena have rightly acquired great prominence in the subject, given the connections with theoretical physics. Indeed, a wealth of algebraic stability conditions for sheaves are systematically studied in the theory of \emph{Bridgeland stability conditions} introduced in \cite{Bridgeland}.

It is natural to ask whether a holomorphic vector bundle satisfying one of these stability conditions admits a connection satisfying some special curvature property, by analogy with the Kobayashi-Hitchin correspondence. Dervan, McCarthy and Sektnan introduced in \cite{DMS} a class of partial differential equations, called \emph{$\Zch$-critical equations}, each corresponding to a \emph{polynomial central charge} (as defined by Bayer~\cite{Bayer}) on the category of coherent sheaves on a projective manifold, and whose existence of solutions is conjecturally related to the corresponding Bridgeland stability condition.

In this work, we slightly expand the approach of \cite{DMS} to consider a larger class of equations and stability conditions. The difference between the two approaches is mainly cosmetic, as most of the results of \cite{DMS} apply to our case. The formalism we are about to introduce makes several results on the $\Zch$-critical equations more transparent, such as the positivity and stability results of \cite{KScarpa} that we recall below.

\subsection{Polynomial curvature equations}
\label{sec:equations}
First, some notation. Let $E\to X$ be a holomorphic vector bundle over the compact complex manifold $X$ of dimension $n$. Given any affine connection $\nabla$ on $E$, we denote by $F(\nabla)\in\Alt^{1,1}(\End E)$ the curvature of $\nabla$. It will also be convenient to denote by $\curvform(\nabla)\coloneqq\frac{\I}{2\pi}F(\nabla)$ the \emph{normalised curvature form} of $\nabla$, which is a real $\End(E)$-valued $(1,1)$-form on $X$. Given a Hermitian metric on $E$, $h\in\Herm(E)$, we will also denote by $\curvform(h)$ the normalised curvature form of the Chern connection defined by $h$ and the holomorphic structure of $E$. The normalisation is chosen in such a way that the closed differential form
\begin{equation}
\Tr\left(\e^{\curvform(h)}\right)\coloneqq \sum_{j\geq 0}\frac{1}{j!}\Tr\left(\curvform(h)^j\right)
\end{equation}
represents the Chern character $\Chern(E)\in H^\bullet(X)$. Here and in what follows, $\curvform(h)^j$ is the $j$th wedge product $\curvform(h)\wedge\dots\wedge\curvform(h)$, and $\curvform(h)^0=\Id_E$ by convention.

\smallskip

Given closed differential forms $(\gamma_0,\dots,\gamma_n)$ on $X$ with $\gamma_j\in\Alt^{n-j,n-j}(X,\R)$ we consider this \emph{polynomial equation} for the curvature form of a connection $\nabla$ on $E$
\begin{equation}\label{eq:polynomial_eq}
    \sum_{j=0}^n\gamma_j\wedge\frac{1}{j!}\curvform(\nabla)^j=0.
\end{equation}
Note in particular that the leading order coefficient $\gamma_n$ is a constant, while $\gamma_0$ is a top-degree form on $X$. Formally, we write~\eqref{eq:polynomial_eq} as $P(\curvform(\nabla))=0$, where
\begin{equation}\label{eq:polynomial}
    P(x)=\sum_{j=0}^n\gamma_j\wedge\frac{1}{j!}x^j\in\Alt^{\bullet}(X,\RR)[x]
\end{equation}
is a polynomial whose coefficients are closed differential forms. Equation~\eqref{eq:polynomial} is a version, on bundles of arbitrary rank, of the PDEs considered in \cite{FangMa_rank1}: we recover their setting when $\rk(E)=1$.

We will often refer to~\eqref{eq:polynomial_eq} as the \emph{$P$-critical equation}, and its solutions are called \emph{$P$-critical connections}. If $\nabla$ is the Chern connection of $h\in\Herm(E)$, we will also say that $h$ is a \emph{$P$-critical metric}. The $P$-critical equation~\eqref{eq:polynomial_eq} can be interpreted as a moment map equation, see Proposition~\ref{prop:momentmap}, and is the Euler-Lagrange equation of a natural generalisation of Donaldson's functional~\cite{Donaldson85}.
\begin{proposition}\label{prop:Lagrangian}
    Having fixed any $h_0\in\Herm(E)$, the $P$-critical metrics on $E$ are the critical points of the functional
    \begin{equation}
        \PLagr(h,h_0)\coloneqq 2\pi\int_X\left(\textstyle\sum_{j\geq 0}\gamma_j\right)\wedge\BC(h,h_0),
    \end{equation}
    where $\BC(h,h_0)$ is the Bott-Chern representative of the total Chern-Simons class.
\end{proposition}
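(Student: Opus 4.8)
The plan is to identify~\eqref{eq:polynomial_eq} as the Euler--Lagrange equation of $\PLagr(\,\cdot\,,h_0)$ by a direct first-variation computation, in the spirit of Donaldson's original argument for the HYM functional. Fix $h_0\in\Herm(E)$ and let $(h_t)$ be a smooth path in $\Herm(E)$ with $h_{t_0}=h$; write $v_t\coloneqq h_t^{-1}\dot h_t$, which is pointwise self-adjoint for $h_t$, and note that as $(h_t)$ varies over all such paths $v_{t_0}$ sweeps out all of $\Herm(E)$ at $h$. One should first observe that $\PLagr$ is well posed: the Bott--Chern secondary class $\BC(h_t,h_0)$ is defined only modulo $\Im\del+\Im\delb$, but each $\gamma_j$ is closed of pure bidegree $(n-j,n-j)$, hence $\del\gamma_j=\delb\gamma_j=0$, so wedging with $\sum_j\gamma_j$ and integrating over the compact manifold $X$ annihilates this indeterminacy by Stokes' theorem.

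The key input is the first-variation formula for the Bott--Chern representative of the total Chern character. I would derive it from the variation of the Chern curvature, $\dot{\curvform}(h_t)=\frac{\I}{2\pi}\,\delb\del^{h_t}(v_t)$, combined with the Bianchi identities $\del^{h_t}\curvform(h_t)^j=\delb\,\curvform(h_t)^j=0$ and the trace identities $\Tr\del^{h_t}=\del\Tr$, $\Tr\delb=\delb\Tr$. These give $\frac{\dd}{\dd t}\Tr\!\left(\e^{\curvform(h_t)}\right)=\frac{\I}{2\pi}\,\delb\del\,\Tr\!\left(v_t\,\e^{\curvform(h_t)}\right)$, which says precisely that, with the Bott--Chern normalisation,
\[
    \frac{\dd}{\dd t}\BC(h_t,h_0)=\Tr\!\left(v_t\,\e^{\curvform(h_t)}\right)\quad\text{modulo }\Im\del+\Im\delb .
\]
No constant survives here because $\curvform$ is normalised so that $\Tr(\e^{\curvform})$ represents $\Chern(E)$.

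Now differentiate $\PLagr(h_t,h_0)$ under the integral sign and wedge with $2\pi\sum_j\gamma_j$. The $\del$- and $\delb$-exact remainder drops out exactly as in the well-posedness remark, and a bidegree count shows that $\gamma_k\wedge\frac{1}{j!}\curvform(h_t)^j$ contributes in top degree precisely when $j=k$, so $\left(\sum_j\gamma_j\right)\wedge\e^{\curvform(h_t)}$ restricts in bidegree $(n,n)$ to $P(\curvform(h_t))$. Hence
\[
    \frac{\dd}{\dd t}\PLagr(h_t,h_0)=2\pi\int_X\Tr\!\left(v_t\,P(\curvform(h_t))\right).
\]
Since $P(\curvform(h))$ is a Hermitian $\End(E)$-valued $(n,n)$-form and the pairing $(v,\Psi)\mapsto\int_X\Tr(v\Psi)$ is non-degenerate on such forms, vanishing of the right-hand side at $t_0$ for every path, equivalently for every $v\in\Herm(E)$, is equivalent to $P(\curvform(h))=0$. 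That is, $h$ is a critical point of $\PLagr(\,\cdot\,,h_0)$ if and only if $h$ is a $P$-critical metric.

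The main obstacle is the first-variation formula for $\BC$: one must pin down the construction of the Bott--Chern secondary class attached to $\phi=\exp$ (equivalently, the double transgression) and verify that its $t$-derivative equals $\Tr(v_t\,\e^{\curvform(h_t)})$ modulo exact forms with the stated normalisation. The remaining ingredients --- the well-posedness observation, the bidegree bookkeeping, and the appeal to the fundamental lemma of the calculus of variations --- are routine.
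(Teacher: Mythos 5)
Your proposal is correct and follows essentially the same route as the paper: the paper packages your key step (the first variation of the Bott--Chern representative, derived from $\dot{\curvform}=\frac{\I}{2\pi}\delb\del^{h}v$ and the Bianchi identity) as property (2) of Proposition~\ref{prop:ChernSimons}, quoted from Donaldson, and then wedges with $\sum_j\gamma_j$ and integrates exactly as you do. The only discrepancy is normalisation: with the paper's convention $2\pi\I\del\delbar\BC(h,h_0)=\Tr(\e^{\curvform(h)}-\e^{\curvform(h_0)})$ one gets $\partial_t\BC(h_t,h_0)=-\frac{1}{2\pi}\Tr\bigl(v_t\,\e^{\curvform(h_t)}\bigr)$ rather than $+\Tr\bigl(v_t\,\e^{\curvform(h_t)}\bigr)$, which is immaterial for identifying the critical points (an overall nonzero constant) but would flip the sign of the functional and hence of the convexity statements used later.
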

Using this Lagrangian, we will show a local uniqueness result for $P$-critical metrics, see Corollary~\ref{cor:uniqueness_local}. The definition of the total Chern-Simons class of $E$ is briefly recalled in Section~\ref{sec:Donaldson_functional}.

\smallskip

For each polynomial as in~\eqref{eq:polynomial}, we can define a notion of \emph{stability} on coherent sheaves. First, for a coherent sheaf $\cE$ on a projective variety $X$, let
\begin{equation}\label{eq:pol_bundle}
    P_X(\cE)=\Bigg(\sum_{j=0}^n [\gamma_j]\smile\Chern_j(\cE)\Bigg)\frown[X]\ \in\RR
\end{equation}
where $\Chern_j(\cE)\in H^{2j}(X)$ are the components of the Chern character of $\cE$, $\Chern(\cE)=\sum_j\Chern_j(\cE)$. Note that for a vector bundle $ E\to X$ on a smooth variety and for any~$h\in\Herm(E)$ we have the simple identities
\begin{equation}
    P_X(E)=\int_X\Tr\,P(\curvform(h))=\int_X\Tr\left(\left(\textstyle\sum_{j\geq 0}\gamma_j\right)\wedge\e^{\curvform(h)}\right)
\end{equation}
with the convention that only the top-degree forms are considered in the integral. In particular, $P_X(E)=0$ is a necessary condition for a solution of~\eqref{eq:polynomial_eq} to exist on a bundle $E\to X$: this is simply a normalisation condition on the coefficients $\gamma_j$, that we assume from now on.
\begin{definition}
    Given a polynomial $P(x)$ as in~\eqref{eq:polynomial}, we say that the vector bundle $E$ is \emph{$P$-stable} if $$P_X(S)<0$$ for any coherent saturated subsheaf $S\subset E$ of rank $0<\rk(S)<\rk(E)$.
\end{definition}
This notion of stability (or its \emph{asymptotic} versions; see Section~\ref{sec:asymptotic_regimes} below) encompasses a wide variety of different stability conditions, from Mumford-Takemoto stability to Gieseker stability, all the Bridgeland stability conditions induced by polynomial central charges as in \cite{Bayer}, the notions of \emph{Monge-Ampère stability} introduced by Pingali \cite{Pingali_vbMA}, the high-rank J-stability in Takahashi's work \cite{Takahashi_Jeq_bundles}, and the semistability notions induced by ample classes recently introduced by Toma and collaborators \cite{Toma_semistab_ampleclasses}. The main results of \cite{DMS} can be translated to show that, in an asymptotic regime known as the \emph{large volume limit}, the existence of solutions of~\eqref{eq:polynomial_eq} on a (sufficiently smooth, simple) vector bundle is equivalent to $P$-stability: we will give more details on the results of \cite{DMS}, and their relation to \cite{Toma_semistab_ampleclasses}, in Section~\ref{sec:asymptotic_regimes}.
\begin{remark}
    Note that in contrast with the usual stability notions of Mumford--Takemoto or Gieseker, we restrict ourselves in the definition to considering saturated subsheaves for testing $P$-stability of a bundle. In fact, in \cite{Delloque_adapted}, the first named author provides examples of destabilizing subsheaves with torsion quotients (even in asymp\-totic regimes). The main point is that in general, taking the saturation doesn't increase the value of $P_X$.
\end{remark}
So far, very few results have been obtained linking $P$-stability to the existence of solutions of~\eqref{eq:polynomial_eq} outside asymptotic regimes. Among these, the most notable have been obtained for the J-equation and the \emph{deformed Hermitian Yang-Mills equation} (dHYM, from now on) on a line bundle $E$ over a projective manifold, where the notion of stability becomes empty and the existence of solutions is in fact characterised (under mild hypotheses) by a notion of \emph{positivity} for $E$, reminiscent of the Nakai-Moishezon criterion \cite{Che21,CLT24,DatarPingali_numericalcriterion}. This result was the culmination of a programme started by Lejmi and Székelyhidi \cite{LejmiSzekelyhidi_Jflow} for the J-equation and Collins, Jacob and Yau \cite{CJY20} for the dHYM equation, with the main technical results proved by G.\ Chen in \cite{Che21}. It is remarkable that these results in rank $1$ have been generalised to a large class of $P$-critical equations in rank $1$: see \cite{FangMa_rank1}.

On higher rank bundles, we only have very partial results and a few examples, see~\cite{Cor24}. It is know that in some cases the existence of solutions of~\eqref{eq:polynomial_eq} implies that $E$ is $P$-stable. See \cite[Theorem $1.1$]{KScarpa} for a slightly sharper result. 
\begin{theorem}[\cite{KScarpa}]\label{thm:Pstab}
    Let $E\to X$ be a rank $2$ holomorphic vector bundle on a compact K\"ahler surface, and let $P(x)$ be a degree $2$ polynomial with positive leading order coefficient. If there exists a Hermitian metric $h$ on $E$ that is $P$-critical and \emph{$P$-positive}, then $E$ is $P$-stable.
\end{theorem}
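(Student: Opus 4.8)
The plan is to run the Chern--Weil argument underlying the Kobayashi--Hitchin correspondence, in the form used by \cite{DMS} to treat the large volume limit, but to exploit the restrictive hypotheses ($\rk E = 2$, $\dim_{\CC} X = 2$, $\deg P = 2$ and positive leading coefficient $\gamma_2 > 0$) in order to control the error terms by $P$-positivity alone rather than only asymptotically. First, since $\rk E = 2$, $P$-stability only requires $P_X(S) < 0$ for saturated subsheaves $S$ of rank one. On a smooth surface such an $S$ is reflexive of rank one, hence a line bundle $L$; saturation forces the inclusion $L \hookrightarrow E$ to be a subbundle inclusion away from the finite set $Z = \Sing(E/L)$, and over $X \setminus Z$ the metric $h$ provides a smooth $C^\infty$-orthogonal splitting $E = L \oplus Q$ with $Q \cong L^\perp$ and second fundamental form $\beta \in \Alt^{1,0}(X \setminus Z, \Hom(L,Q))$.

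Next, writing the Chern classes of $L$ through the normalised curvature $\curvform(h_L)$ of the induced metric, one has
\[
P_X(L) \;=\; \int_X \Tr P\bigl(\curvform(h_L)\bigr) \;=\; \int_X \Bigl( \tfrac{\gamma_2}{2}\,\curvform(h_L)^2 + \gamma_1 \wedge \curvform(h_L) + \gamma_0 \Bigr),
\]
the integrand being $L^1$ and the set $Z$ negligible (as in the Uhlenbeck--Yau/Simpson degree formula; on a surface the term quadratic in the curvature costs only a mild additional regularity argument for the $L^2_1$ projection $\pi_L$). The Gauss--Codazzi equations give $\curvform(h_L) = \mu - \tau$, where $\mu = \curvform(h)_{LL}$ is the $LL$-block of the curvature of $(E,h)$ and $\tau \coloneqq -\tfrac{\I}{2\pi}\beta^* \wedge \beta \ge 0$, while the $LL$-block of $\curvform(h)^2$ equals $\mu^2 + \sigma \wedge \sigma^*$ with $\sigma = \curvform(h)_{LQ}$ and $\sigma^* = \curvform(h)_{QL}$. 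Taking the $LL$-block of the $P$-critical equation $P(\curvform(h)) = 0$ yields $\tfrac{\gamma_2}{2}\mu^2 + \gamma_1 \wedge \mu + \gamma_0 = -\tfrac{\gamma_2}{2}\,\sigma \wedge \sigma^*$; substituting, the ``main term'' cancels and
\[
P_X(L) \;=\; \int_X \Bigl( -\tfrac{\gamma_2}{2}\,\sigma \wedge \sigma^* \;-\; (\gamma_1 + \gamma_2 \mu) \wedge \tau \;+\; \tfrac{\gamma_2}{2}\,\tau^2 \Bigr),
\]
an integral in $\beta$ and the curvature blocks of $h$ only.

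The heart of the matter is the sign of this error integral, and here $P$-positivity of $h$ enters. The defining positivity --- that $P'(\curvform(h)) = \gamma_1 \Id_E + \gamma_2\,\curvform(h)$ is a positive Hermitian $(1,1)$-form --- forces $\gamma_1 + \gamma_2 \mu > 0$ on the $LL$-block, so $(\gamma_1 + \gamma_2\mu)\wedge\tau \ge 0$ pointwise (a wedge of semipositive $(1,1)$-forms on a surface) and the middle term is $\le 0$; the term $\tfrac{\gamma_2}{2}\tau^2$ is $\ge 0$ since $\gamma_2 > 0$ and $\tau \ge 0$, and $-\tfrac{\gamma_2}{2}\,\sigma\wedge\sigma^*$ is not of definite sign. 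The remaining, and crucial, input is an integration by parts: using the Codazzi identity $\sigma^* \propto \delb\beta$ together with the Bianchi identity, one rewrites $\int_X \sigma\wedge\sigma^*$ in terms of $\int_X \tau \wedge (\text{curvature blocks of } h)$, after which the three error terms recombine, and positivity of $\gamma_1 + \gamma_2\mu$ together with $\gamma_2 > 0$ forces the total to be $\le 0$ --- morally a Bogomolov-type inequality for the extension $0 \to L \to E \to Q \to 0$ weighted by the positive form $P'(\curvform(h))$. I expect this last step to be the main obstacle: it is exactly here that $\rk E = 2$, $\dim_{\CC} X = 2$ and $\deg P = 2$ are used together, and it replaces the manifestly non-positive correction $-\abs{\beta}^2$ of the classical HYM computation by a non-obvious cancellation whose constants must be made to close.

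Finally, the argument above gives $P_X(L) \le 0$, with equality only if every error term vanishes, hence $\beta \equiv 0$ on $X \setminus Z$ and a holomorphic (and metric) decomposition $E = L \oplus Q$. Such a splitting is excluded under the hypotheses: a $P$-critical and $P$-positive metric has an invertible linearisation (compare the local uniqueness of Corollary~\ref{cor:uniqueness_local}), so $E$ is simple and in particular indecomposable --- equivalently, a decomposable bundle is never $P$-stable, so only the indecomposable case is at issue. Hence $P_X(S) < 0$ for every saturated rank-one subsheaf $S \subset E$, and $E$ is $P$-stable.
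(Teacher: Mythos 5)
Your set-up is the right one and coincides with the strategy of \cite{KScarpa} (which the present paper cites for this theorem without reproving it): reduce to a rank-one saturated subsheaf, which on a surface is a line bundle $L$ embedding as a subbundle off a finite set; decompose $\curvform(h_L)=\mu-\tau$ with $\tau=-\tfrac{\I}{2\pi}b\wedge b^*\geq 0$ the second-fundamental-form term; take the $LL$-block of $P(\curvform(h))=0$; and arrive at
\[
P_X(L)=\int_X\Bigl(-\tfrac{\gamma_2}{2}\,\sigma\wedge\sigma^*-(\gamma_1+\gamma_2\mu)\wedge\tau\Bigr),
\]
where in fact the $\tfrac{\gamma_2}{2}\tau^2$ term you keep vanishes identically ($\tau$ has rank one as a $(1,1)$-form on a surface). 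The claim that $P$-positivity forces $\gamma_1+\gamma_2\mu>0$ is also correct. But the step on which the whole theorem rests --- the treatment of $\int_X\sigma\wedge\sigma^*$ --- is not carried out: you describe it ``morally'' and explicitly write that you ``expect this last step to be the main obstacle'' whose ``constants must be made to close''. That is a plan, not a proof, and it is precisely where the content lies, since $\sigma\wedge\sigma^*$ has no pointwise sign. For the record the step does close, essentially as you guess: Codazzi gives $\delbar b=0$, hence $\sigma$ is proportional to $\nabla^{1,0}b$, and Stokes together with the curvature identity on $\Hom(Q,L)$-valued forms yields $\int_X\sigma\wedge\sigma^*=\int_X(\nu-\mu)\wedge\tau$ with $\nu=\curvform(h)_{QQ}$; substituting, the error terms recombine into
\[
P_X(L)=-\int_X\bigl(\gamma_1+\tfrac{\gamma_2}{2}(\mu+\nu)\bigr)\wedge\tau=-\tfrac12\int_X\Tr\bigl(P'(\curvform(h))\bigr)\wedge\tau\leq 0,
\]
which is nonpositive because $P$-positivity makes $\Tr P'(\curvform(h))$ a positive $(1,1)$-form. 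Until you perform this integration by parts and track the constants, the argument is incomplete at its central point.

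Two further problems. First, your strictness argument is wrong: a $P$-critical, $P$-positive metric does not have invertible linearisation (holomorphic endomorphisms always lie in its kernel, as the moment-map picture shows), and it does not force $E$ to be simple. A split bundle $L_1\oplus L_2$ with $P_X(L_i)=0$ and suitable rank-one solutions on each factor is $P$-critical and can be $P$-positive, yet is never $P$-stable; so the equality case $\tau\equiv 0$ genuinely occurs and must be removed by an extra indecomposability/simplicity hypothesis --- this is exactly why the statement in \cite{KScarpa} is described here as ``slightly sharper'' than the version quoted. Second, the passage from genuine line subbundles to saturated subsheaves is not as cheap as ``$Z$ is negligible'': the term $\sigma\wedge\sigma^*$ is quadratic in second derivatives of the orthogonal projection, and justifying the Chern--Weil identity for $\Chern_2$ and the integration by parts across the degeneracy locus requires an actual regularity or cut-off argument (or a reduction to an honest subbundle after an elementary modification), which you should supply rather than assert.
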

The precise definition of \emph{$P$-positivity} is given in Section~\ref{sec:momentmap}. Briefly, this condition on $h\in\Herm(E)$ is a generalisation of Griffiths positivity guaranteeing that the linearisation of the $P$-critical equation~\eqref{eq:polynomial_eq} is elliptic at $h$.

For the specific choices of polynomial defining the versions of the $J$ equation and the Monge-Ampère equation on vector bundles, Theorem~\ref{thm:Pstab} is due to Takahashi~\cite{Takahashi_Jeq_bundles} and Pingali~\cite{Pingali_vbMA}, respectively. Note that in \cite{KScarpa} the authors consider only those $P$-critical equations that come from a \emph{polynomial central charge} as in \cite{DMS}; the arguments in \cite{KScarpa} however only assume that the equation is defined by a degree~$2$ polynomial, and so they directly apply to our more general case.

Some examples in \cite{KScarpa}, together with the moment map interpretation of the equation, point out that only the existence of solutions of the $P$-critical equation in the set
\begin{equation}
    \HermPos_P(E)\coloneqq \set*{ h\in\Herm(E) \suchthat h\text{ is $P$-positive}}
\end{equation}
can be related to stability properties of $E$. It is natural to wonder if there are some natural conditions that guarantee the existence of $P$-positive metrics; in \cite{KScarpa}, a set of obstructions for a bundle to admit a $P$-positive metric was identified.
\begin{theorem}[\cite{KScarpa}]\label{theo:Ppositive is necessary}
    Assume that the bundle $E\to X$ and a polynomial $P(x)$ are as in Theorem~\ref{thm:Pstab}. If $\HermPos_P(E)\not=\emptyset$, then $P'_Y(Q)>0$ for every irreducible analytic curve $Y\subset X$ and every torsion-free quotient $Q$ of $E_{\restriction Y}$ of rank $\rk(Q)\geq 1$.
\end{theorem}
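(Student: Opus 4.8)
\medskip

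\noindent\emph{Proof strategy.} Since $\dim X = 2$ and $P$ has degree $2$, write $P(x) = \gamma_0 + \gamma_1\wedge x + \tfrac12\gamma_2\, x^2$, where the leading coefficient $\gamma_2 = \gamma_n$ is a positive constant by hypothesis, $\gamma_1$ is a real $(1,1)$-form, $\gamma_0$ a real $(2,2)$-form, and set $P'(x) \coloneqq \gamma_1 + \gamma_2\, x$ for the formal derivative. The plan is to reduce the whole statement to a single positivity property of an $\End(E)$-valued form. In this degree-two surface case the definition of $P$-positivity from Section~\ref{sec:momentmap} unwinds to the condition that the real $\End(E)$-valued $(1,1)$-form
\[
    \Theta_h \;\coloneqq\; P'(\curvform(h)) \;=\; \gamma_1\otimes\Id_E \,+\, \gamma_2\,\curvform(h)
\]
be Griffiths-positive with respect to $h$: computing the principal symbol of the linearisation of~\eqref{eq:polynomial_eq} at $h$, one finds that in a direction $f\in\Herm(E)$ and a nonzero covector $\xi$ it is, up to a positive volume factor, the self-adjoint operator $f\mapsto\tfrac12\{A_\xi, f\}$ on $\Herm(E)$, where $A_\xi\in\Herm(E)$ is the endomorphism determined by $\I\,\xi^{1,0}\wedge\overline{\xi^{1,0}}\wedge\Theta_h$; on a surface this is positive-definite for every $\xi\neq0$ precisely when $\Theta_h$ is Griffiths-positive. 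Granting this, the claim reduces to the statement that Griffiths-positivity of $\Theta_h$ on $X$ restricts to curves and is inherited by torsion-free quotients of restrictions (the latter using $\gamma_n>0$), while $P'_Y(Q)$ is the integral over $Y$ of the fibrewise trace of the resulting form.

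So I would fix an irreducible analytic curve $Y\subset X$ and pass to its normalisation $\nu\colon\tilde Y\to Y\hookrightarrow X$, a compact connected Riemann surface, with $\nu$ finite and birational onto $Y$ and an immersion away from the finitely many points over $\Sing(Y)$. Pullback along $\nu$ commutes with taking Chern connections, so $\nu^\ast\Theta_h = \nu^\ast\gamma_1\otimes\Id + \gamma_2\,\curvform(\nu^\ast h)$; and since Griffiths-positivity of $\Theta_h$ survives restriction of tangent directions along the immersive locus, $\nu^\ast\Theta_h$ is Griffiths-semipositive on $\nu^\ast E$ and strictly Griffiths-positive off the (finite) ramification locus of $\nu$. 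The torsion-free quotient $E|_Y\twoheadrightarrow Q$ pulls back to $\nu^\ast E\twoheadrightarrow\nu^\ast Q$; killing the torsion of $\nu^\ast Q$ gives a holomorphic quotient \emph{bundle} $\tilde Q$ of $\nu^\ast E$, fitting into an exact sequence $0\to\tilde S\to\nu^\ast E\to\tilde Q\to 0$ of bundles on $\tilde Y$, which I equip with the quotient metric $h_{\tilde Q}$.

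The second-fundamental-form identity for a metric quotient then reads $\curvform(h_{\tilde Q}) = \mathrm{pr}_{\tilde Q}\!\circ\curvform(\nu^\ast h)\circ\iota_{\tilde Q} + \tau$ with $\tau$ a Griffiths-semipositive $\End(\tilde Q)$-valued $(1,1)$-form, whence
\[
    \Theta_{h_{\tilde Q}} \;=\; P'(\curvform(h_{\tilde Q})) \;=\; \mathrm{pr}_{\tilde Q}\!\circ\bigl(\nu^\ast\Theta_h\bigr)\circ\iota_{\tilde Q} \;+\; \gamma_2\,\tau,
\]
which, because $\gamma_2>0$, is the sum of a Griffiths-semipositive compression (strictly positive off the ramification locus) and a Griffiths-semipositive term; thus $\Theta_{h_{\tilde Q}}$ is Griffiths-semipositive on $\tilde Q$ and Griffiths-positive on a dense open subset of $\tilde Y$. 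Its fibrewise trace is therefore a real $(1,1)$-form on $\tilde Y$ that is $\geq 0$ pointwise and positive on a dense open set, whence
\[
    0 \;<\; \int_{\tilde Y}\Tr\,\Theta_{h_{\tilde Q}} \;=\; \rk(Q)\!\int_{\tilde Y}\!\nu^\ast\gamma_1 \;+\; \gamma_2\!\int_{\tilde Y}\!\Tr\,\curvform(h_{\tilde Q}) \;=\; \rk(Q)\!\int_{Y}\!\gamma_1 \;+\; \gamma_2\,\deg(\tilde Q),
\]
using that $\nu$ is birational and that $\Tr\,\curvform(h_{\tilde Q})$ represents $\Chern_1(\tilde Q)$ for the normalisation fixed in Section~\ref{sec:equations}. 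Finally, passing from $Q$ to $\tilde Q$ (pull back, then discard torsion) can only lower the degree, i.e.\ $\deg(\tilde Q)\leq\Chern_1(Q)\frown[Y]$, so with $\gamma_2>0$ one gets
\[
    P'_Y(Q) \;=\; \rk(Q)\!\int_{Y}\!\gamma_1 \;+\; \gamma_2\,\bigl(\Chern_1(Q)\frown[Y]\bigr) \;\geq\; \int_{\tilde Y}\Tr\,\Theta_{h_{\tilde Q}} \;>\; 0,
\]
which is the assertion; for $Y$ smooth one has $\tilde Y = Y$, $\tilde Q = Q$, and all three quantities coincide.

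The conceptual crux, and the step I would treat most carefully, is the sign of $\tau$ in the second-fundamental-form identity: Griffiths-positivity of $\Theta_h = P'(\curvform(h))$ passes to quotient bundles \emph{precisely because} the leading coefficient $\gamma_n$ is positive, so the positivity hypothesis on $\gamma_n$ is what makes the argument work rather than a cosmetic assumption. The remaining ingredients are routine: identifying $P$-positivity with Griffiths-positivity of $\Theta_h$ requires the short symbol computation sketched above (and here $\dim X = 2$ and $\deg P = 2$ are genuinely used), and the bookkeeping for a singular $Y$ with non-locally-free $Q$ only needs the elementary fact that degree drops under normalisation and desaturation — again in the direction compatible with $\gamma_n>0$.
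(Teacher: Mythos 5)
The paper does not actually prove this statement --- it is quoted from \cite{KScarpa} --- so there is no internal proof to compare against; judged on its own terms, your argument is sound and is, in outline, the argument of the cited source: restrict $P'(\curvform(h))$ to (the normalisation of) the curve, pass to the torsion-free quotient via the second fundamental form, and integrate the trace, with the positive leading coefficient $\gamma_2$ guaranteeing that the second-fundamental-form correction contributes with the favourable sign. Two details deserve to be pinned down rather than sketched. First, the identification of $P$-positivity with Griffiths positivity of $\Theta_h$ on a surface: testing the definition against $\xi=\bar\beta\otimes A$ and using cyclicity of the trace, the six terms of the symmetrisation collapse to $\tfrac12\,\I\,\Tr\bigl(\Theta_h(A^*A+AA^*)\bigr)\wedge\beta\wedge\bar\beta$, and since $A^*A+AA^*$ ranges over all nonzero positive semidefinite $h$-Hermitian endomorphisms as $A$ ranges over nonzero endomorphisms, positivity for all decomposable $\xi$ is indeed equivalent to Griffiths positivity of $\Theta_h$; but what your proof actually consumes is only the partial-trace consequence that $\Tr(\Theta_h\,\pi)$ is a positive $(1,1)$-form for every $h$-orthogonal projection $\pi$ (rank-one projections give the Griffiths case, $\pi=\Id$ the case $Q=E_{\restriction Y}$), and this is the cleaner lemma to isolate and prove. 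Second, the comparison $\deg_{\tilde Y}\tilde Q\le\Chern_1(Q)\frown[Y]$ for a singular curve and a non-locally-free $Q$ is asserted rather than justified; it does hold (pullback along the finite birational normalisation preserves the degree by the projection formula, and killing the torsion of $\nu^*Q$ subtracts its length), and since $\gamma_2>0$ the inequality points the right way, but this is precisely the spot where a sign slip would silently invalidate the conclusion, so it merits an explicit line. With those two points made precise, the proof is complete.
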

Here, $P'(x)=\sum_{j=1}^n\gamma_j\frac{1}{(j-1)!}x^{j-1}$ simply denotes the derivative of the polynomial $P(x)=\sum_{j=0}^n\gamma_j\frac{1}{j!}x^{j}$, and note that $Q$ is allowed to coincide with $E_{\restriction Y}$ itself. We refer again to \cite{KScarpa} for a more precise statement of Theorem~\ref{theo:Ppositive is necessary}. In particular, if the leading order coefficient of $P$ is negative rather than positive, the opposite inequality holds for quotients.

The numerical necessary condition in Theorem~\ref{theo:Ppositive is necessary} is called \textit{$P$-positivity of $E$ with respect to quotients}. Leaving the large volume limit case of \cite{DMS} aside, it is a remarkable fact that even on line bundles, the existence of a $P$-critical metric is a highly non-trivial problem (see e.g.\ \cite{Che21}).  The appropriate positivity notion for the holomorphic vector bundle appears then as a key new aspect in the theory of $P$-stability when comparing to more classical stability notions for vector bundles. Despite this observation, there are still few known explicit vector bundles carrying $Z$-critical connections (see e.g.\ \cite{CLSY} for line bundles on elliptic surfaces, \cite{Cor23,Cor24} for homogeneous examples, \cite{JaShe} and \cite{KSD} for line bundles on the blow-up of $\P^n$ in one or two points). In this paper, we develop some devices to check $P$-positivity in practice and to produce new examples of $P$-positive bundles. 

\subsection{\texorpdfstring{$P$}{P}-positivity, \texorpdfstring{$P$}{P}-stability, and equivariance.}
\label{sec:definition Zpositivity}

Assume now that $X$ is a smooth $n$-dimensional projective variety. Fix a polynomial 
$$
 P(x)=\sum_{j=0}^n[\gamma_j]\wedge\frac{1}{j!}x^j\in H^{\bullet}(X,\RR)[x].
$$

\subsubsection{Equivariant $P_\delta$-positivity}

For any coherent sheaf $\cE$ on $X$ and any $d$-di\-men\-sion\-al subvariety $V\subset X$, we  set
\begin{equation}\label{eq:Pstab_restricted}
    P_V(\cE)= \left[\textstyle\sum_{j\geq 0}[\gamma_j] \frown \ch(\cE)\right]^{(d,d)} \smile \cl(V)\in \R.
\end{equation}
where $\cl(V)\in H^{2n-2d}(X,\Q)$ is the cycle class of $V$. A direct adaptation of \cite[Theorem $4.3.13$]{McCarthy_thesis} shows that if $\HermPos_P(E)\not=\emptyset$ then $P_V(\cE)>0$ for any codimension $1$ subvariety $V\subset X$. We expand this condition in the following
\begin{definition}\label{def:Ppositivity}
    Let $\delta=\lbrace \delta_0,\ldots,\delta_{n-1}\rbrace\in \lbrace -1, +1\rbrace^n$ and
    let $\cE$ be a reflexive sheaf on $X$ such that $P_X(\cE)=0$. Then $\cE$ is said to be \emph{$P_\delta$-positive} if for any $1\leq k \leq n-1$, and any subvariety $V\subset X$ with $\dim(V)=k$, one has 
    \begin{equation}\label{eq:inequalitysubvariety}
        \delta_k P_V(\cE)>0.
    \end{equation}
    If in addition the inequality also holds for any $0$-dimensional cycle $V\subset X$, $\cE$ is said to be \emph{strongly $P_\delta$-positive}.
\end{definition}

\begin{remark}
    We'll see in Lemma~\ref{LEM:P positivité forte} that strong $P_\delta$-positivity is equivalent to $P_\delta$-positivity and the fact that $\delta_0\gamma_n>0$. This strong positivity condition then gives a way to take into account the leading coefficient hypothesis of Theorems~\ref{thm:Pstab} and~\ref{theo:Ppositive is necessary}. It ensures in particular that $P$ has maximal degree.
\end{remark}

This definition of positivity extends the notion of $Z$-positivity from \cite{DMS,KScarpa}, where the polynomial $P$ comes from a central charge (see Section~\ref{sec:Zcritic example}). The introduction of the various signs through the $\delta_k$s in Definition~\ref{def:Ppositivity} is motivated by results in \cite{KScarpa,JaShe}, see \cite[Remark $3.15$]{KScarpa} in particular.

The notion of $P_\delta$-positivity for vector bundles should be taken as a first obstruction to the existence of $P$-positive metrics \emph{for suitable polynomials $P$}, that should satisfy an analogue of the supercritical phase condition for the dHYM equation. Indeed, in Section~\ref{sec:equivpositivity}, we provide an example of a $P$-positive metric on a line bundle over the product of three copies of the projective line that is \emph{not} $P_\delta$-positive in the above sense. Note also that in higher rank and on a higher-dimensional base, one should consider a stronger version of positivity to ensure the existence of a positive metric. This stronger version should involve  the signs of the quantities $P_V(Q)$, for all torsion-free quotients $Q$ of $\cE_{\vert V}$, c.f.\ Theorem~\ref{theo:Ppositive is necessary}.

We will be interested in an equivariant version of $P_\delta$-positivity. 
\begin{definition}
 \label{def:equivariant2}
 Let $X$ be a projective $G$-variety, for $G$ an algebraic group. A reflexive sheaf $\cE$ on $X$ is \emph{equivariantly $P_\delta$-positive}  (resp. \emph{equivariantly strongly $P_\delta$-positive}) if Condition~\eqref{eq:inequalitysubvariety} holds for any $G$-invariant subvariety $V\subset X$ with $0<  \dim(V)<\dim(X)$ (resp. with $0\leq  \dim(V)<\dim(X)$). 
\end{definition}
Using Brion's work \cite{Brion}, we easily obtain: 
\begin{theorem}
 \label{theo:intro equiv}
 Let $G$ be a connected and solvable affine algebraic group acting regularly on $X$ a smooth projective variety. Let $\cE$ be a reflexive sheaf over $X$. Then  $\cE$ is (strongly) $P_\delta$-positive if and only if it is $G$-equivariantly (strongly) $P_\delta$-positive.
\end{theorem}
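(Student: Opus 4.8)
The plan is to reduce the statement to a purely cohomological fact about $G$-invariant subvarieties, and then invoke Brion's description of the equivariant Chow groups (or rational cohomology) of smooth projective varieties acted on by connected solvable affine algebraic groups. The key observation is that $P_\delta$-positivity (resp. strong $P_\delta$-positivity) only depends, through the quantities $P_V(\cE)$ of~\eqref{eq:Pstab_restricted}, on the numerical (cycle) classes $\cl(V)\in H^{2n-2k}(X,\Q)$ of the subvarieties $V$ being tested, together with their dimension $k$. Indeed, $P_V(\cE)$ is obtained by pairing a fixed cohomology class built from $[\gamma_j]$ and $\ch(\cE)$ against $\cl(V)$. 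So the content to establish is: for each $0\le k<n$, the classes of $k$-dimensional $G$-invariant subvarieties span (over $\Q_{\ge 0}$, i.e. as a cone) the same set of ``testable'' directions as the classes of all $k$-dimensional subvarieties, in the sense relevant to the sign conditions~\eqref{eq:inequalitysubvariety}.

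First I would make precise what is needed. One implication is trivial: if $\cE$ is (strongly) $P_\delta$-positive then in particular $\delta_k P_V(\cE)>0$ for every $G$-invariant $V$ of the appropriate dimension, so it is equivariantly (strongly) $P_\delta$-positive. For the converse, suppose $\cE$ is equivariantly (strongly) $P_\delta$-positive and let $V\subset X$ be an arbitrary subvariety with $\dim V=k$ (with $0<k<n$, or $0\le k<n$ in the strong case). By Brion's result~\cite{Brion} on actions of connected solvable affine algebraic groups $G$ on smooth projective varieties, the rational Chow group $A_k(X)_\Q$ (equivalently $H^{2n-2k}(X,\Q)$, since $X$ is smooth projective and these agree for such $X$ after Brion's theorem identifies cycles with Schubert-type classes) is generated by the classes of $B$-invariant — hence, up to the same generation statement, $G$-invariant — subvarieties of dimension $k$; moreover the cone of effective classes is generated by such invariant subvarieties. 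Thus $\cl(V)=\sum_i a_i\,\cl(W_i)$ for finitely many $G$-invariant subvarieties $W_i$ of dimension $k$ and \emph{nonnegative} rationals $a_i$, not all zero. Applying $P_{(-)}(\cE)$, which is linear in the cycle class, gives $P_V(\cE)=\sum_i a_i\,P_{W_i}(\cE)$, and since each $\delta_k P_{W_i}(\cE)>0$ by the equivariant hypothesis and $a_i\ge 0$ with $\sum a_i\neq 0$, we get $\delta_k P_V(\cE)>0$. This proves~\eqref{eq:inequalitysubvariety} for $V$, hence $\cE$ is (strongly) $P_\delta$-positive.

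The main obstacle — and the point that needs the most care — is the precise invocation of Brion's work: one must cite the correct statement, namely that for a connected solvable linear algebraic group $G$ acting on a smooth projective $X$, the effective cone of $k$-cycles (rationally) is generated by the $G$-invariant subvarieties of dimension $k$, and in particular every class is a nonnegative combination of such. For a general connected solvable $G$ one typically reduces to the Borel-type situation, or more simply to a maximal torus together with the unipotent radical using that $G$ is a semidirect product $T\ltimes G_u$; the fixed-point and Bialynicki-Birula-type decompositions then yield that invariant subvarieties generate. I would state this as a lemma, attributing it to~\cite{Brion}, and note that the nonnegativity of the coefficients $a_i$ is exactly what makes the sign conditions pass to the closure of the effective cone, so that no cancellation can occur. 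A secondary point to verify is that $0$-cycles are handled identically in the ``strong'' case: the class of a point is, up to positive rational multiple, the class of any $G$-fixed point (there is at least one, by Borel's fixed point theorem applied to $G$ connected solvable acting on the projective variety $X$), and $A_0(X)_\Q\cong\Q$ is generated by it, so $\delta_0 P_V(\cE)>0$ for a fixed point forces it for every $0$-cycle.
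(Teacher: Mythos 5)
Your proposal is correct and follows essentially the same route as the paper: both reduce the statement to Brion's theorem that effective cycles are rationally equivalent to nonnegative combinations of $G$-invariant cycles, and then conclude by linearity of $V\mapsto P_V(\cE)$ in the cycle class. Your extra care about the $0$-dimensional case (existence of a $G$-fixed point via Borel, and the fact that all points share the same cohomology class) is a sound addition but not needed beyond what the paper's Lemma~\ref{LEM:P positivité forte} already provides.
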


We now give some applications of Theorem~\ref{theo:intro equiv}. For a fixed line bundle $H$ on a compact K\"ahler surface $X$, Khalid and Sj\"ostr\"om Dyrefelt have shown in \cite{KSD} that for any compact region $\cK$ in the set of polynomial central charges (satisfying a minor numerical constraint), there is a finite number of curves $(C_i)_{1\leq i\leq m}$ in $X$ such that for any $Z\in \cK$, $H$ is $Z$-positive if and only if the analogue of condition~\eqref{eq:inequalitysubvariety} is satisfied for each $C_i$ (see also \cite{KSD2} for similar results in higher dimension).
Such finiteness results are crucial in both testing effectively $Z$-positivity and in understanding wall-crossing phenomena under variations of the stability condition. In comparison, we obtain: 
\begin{corollary}
 \label{cor:intro}
 Let $X$ be a smooth projective toric variety and let $\cE$ be a reflexive sheaf over $X$. Then $\cE$ is (strongly) $P_\delta$-positive if and only if for any torus orbit closure $V\subset X$ with $0(\leq)<k=\dim(V)<\dim(X)$, one has 
 $$
\delta_kP_V(\cE)>0.
 $$
\end{corollary}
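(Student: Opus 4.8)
The plan is to apply Theorem~\ref{theo:intro equiv} with $G$ the torus $T$ acting on $X$, and then to recall that on a toric variety the $T$-invariant subvarieties are exactly the torus orbit closures.

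First, observe that the hypotheses of Theorem~\ref{theo:intro equiv} are satisfied for $G=T\cong(\C^*)^{\dim X}$: the torus is a connected affine algebraic group, it is abelian and hence solvable, and it acts regularly on the toric variety $X$ by construction. Theorem~\ref{theo:intro equiv} therefore gives that $\cE$ is (strongly) $P_\delta$-positive if and only if it is $T$-equivariantly (strongly) $P_\delta$-positive. By Definition~\ref{def:equivariant2}, the latter means precisely that $\delta_k P_V(\cE)>0$ holds for every $T$-invariant subvariety $V\subset X$ with $0(\leq)<k=\dim(V)<\dim(X)$.

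Second, I would invoke the orbit--cone correspondence for the toric variety $X$ with fan $\Sigma$: one has the orbit decomposition $X=\bigsqcup_{\sigma\in\Sigma}O(\sigma)$, each orbit closure $V(\sigma)=\overline{O(\sigma)}$ is an irreducible $T$-invariant subvariety of dimension $\dim(X)-\dim(\sigma)$, and conversely every irreducible $T$-invariant closed subvariety of $X$ arises this way (this is standard; see e.g.\ Fulton's or Cox--Little--Schenck's book on toric varieties). In particular, the $0$-dimensional $T$-invariant subvarieties are exactly the torus fixed points. Hence the family of $T$-invariant subvarieties tested in the equivariant condition above is exactly the family of torus orbit closures $V\subset X$ with $0(\leq)<\dim(V)<\dim(X)$, and so $T$-equivariant (strong) $P_\delta$-positivity of $\cE$ is word for word the condition appearing in the statement. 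Combining the two paragraphs proves the corollary.

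There is no genuine obstacle here: essentially all the content is already packaged in Theorem~\ref{theo:intro equiv}, and the only things to check are the elementary facts that an algebraic torus is connected, affine and solvable, together with the standard identification of irreducible $T$-invariant subvarieties of a toric variety with the orbit closures. Finally, as in the discussion preceding the corollary, one should remark that since $\Sigma$ is a finite fan there are only finitely many orbit closures, so this yields a finite and explicit test for (strong) $P_\delta$-positivity, independent of the chosen polynomial $P$.
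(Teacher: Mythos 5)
Your proof is correct and follows exactly the route the paper intends: the paper simply remarks that a toric variety satisfies the hypotheses of Theorem~\ref{theo:intro equiv} (the torus being connected, affine and solvable) and that the corollary is then immediate via the orbit--cone correspondence identifying irreducible $T$-invariant subvarieties with orbit closures. Your write-up just makes these two elementary verifications explicit.
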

Note that by the orbit-cone correspondence for toric varieties, the orbit closures are in bijection with the cones in the fan of $X$ \cite[Chapter 3]{CLS}. Hence, they form a finite family only depending on the variety. We thus obtain finiteness of the number of subvarieties to test $P_\delta$-positivity on, \emph{independently of the central charge and of the bundle}. From the definition, $P_\delta$-positivity is an open condition with respect to deformations of $E$. In the toric case, by Corollary~\ref{cor:intro}, it is also an open condition with respect to the choice of $P\in H^{\bullet}(X,\RR)[x]$.

While in Theorem~\ref{theo:intro equiv}, the sheaf is not required to be $G$-equivariant, in the toric case,  torus-equivariant sheaves admit a simple combinatorial description due to Klyachko and Perling \cite{Kl,Perl}. 
Together with the intersection theory of toric orbit closures, which is well-understood (see e.g.\ \cite[Chapter 12]{CLS}), Theorem~\ref{theo:intro equiv} enables to check $P_\delta$-positivity of equivariant sheaves on toric varieties very efficiently.  This is briefly discussed in Section~\ref{sec:toric case}. As  examples, in Section~\ref{sec:toric applications}, we show that the tangent  bundles of Hirzebruch surfaces are positive for the dHYM central charge. We also provide a $3$-dimensional toric Fano example with the same property.

\subsubsection{Equivariant \texorpdfstring{$P$}{P}-stability}
In higher rank, the results from \cite{KScarpa} suggest that relating existence of $P$-critical connections to some stability notion requires to consider $P$-stability. We introduce in Section~\ref{sec:Zstability} \emph{equivariant $P$-stability} for toric sheaves, and explain the difficulties to overcome in order to show that it implies its non-equivariant version. For slope (or Gieseker) stability, it is known that stability of a toric sheaf is implied by its equivariant stability \cite{Koo}, and one of the missing features to extend the arguments  here is the existence of Jordan--H\"older and Harder--Narasimhan filtrations. Ultimately, the issue comes from the fact that the process of saturating a sheaf doesn't increase what we could call its $P$-slope in general, which is a source of counter-examples, as explained in \cite{Delloque_adapted}.  Nevertheless, equivariant $P$-stability is expected to be a necessary condition for solving the $P$-critical equation, and can be checked efficiently for toric sheaves. In the large volume limit case, for suitable polynomials (called adapted to torsion-free sheaves in \cite{Delloque_adapted}, see Section~\ref{sec:Zstability}), we obtain: 

\begin{theorem}
 \label{theo:intro equiv Zstab asymptotic}
 Let $\cE$ be an equivariant reflexive sheaf over a $T$-variety $X$, and let $P_r$ be an asymptotic polynomial equation adapted to torsion-free sheaves. Then $\cE$ is asymptotically $P_r$-stable if and only if it is equivariantly asymptotically $P_r$-stable.
\end{theorem}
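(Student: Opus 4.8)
The plan is to reduce the statement to the non-asymptotic equivariant positivity/stability machinery developed earlier, applied degree by degree in the asymptotic parameter $r$. Recall that an asymptotic polynomial equation $P_r$ is a family $P_r(x)=\sum_j \gamma_j(r)\wedge \frac{1}{j!}x^j$ whose coefficients are polynomials (or Laurent polynomials) in $r$, and that asymptotic $P_r$-stability of $\cE$ means that for every saturated subsheaf $S\subset\cE$ with $0<\rk(S)<\rk(\cE)$ one has $(P_r)_X(S)<0$ for all $r\gg 0$; equivalently, the leading coefficient in $r$ of the polynomial $r\mapsto (P_r)_X(S)$ is negative (with the convention that if this polynomial vanishes identically one passes to the next relevant order, which is exactly the role played by the hypothesis that $P_r$ is \emph{adapted to torsion-free sheaves} in the sense of \cite{Delloque_adapted}). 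The key structural input is that ``adapted to torsion-free sheaves'' ensures that saturating a subsheaf does not decrease the relevant leading-order quantity, so that testing on saturated subsheaves is equivalent to testing on all subsheaves at that order.

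First I would set up the comparison: equivariant asymptotic $P_r$-stability of the equivariant reflexive sheaf $\cE$ means the same inequalities hold only for $T$-invariant (equivalently, equivariant) saturated subsheaves $S\subset\cE$. The implication ``$P_r$-stable $\Rightarrow$ equivariantly $P_r$-stable'' is immediate since the latter tests on a subclass of subsheaves. For the converse, I would argue as in Kool's proof \cite{Koo} of the toric analogue for slope/Gieseker stability, but carried out order-by-order in $r$. Given an arbitrary saturated destabilising (or non-stable) subsheaf $S\subset\cE$, decompose the leading coefficient of $r\mapsto(P_r)_X(S)$ and observe that each coefficient $\gamma_j(r)$ is (at each fixed order of $r$) a fixed cohomology class, so the order-$d$ term of $(P_r)_X(S)$ is $P^{(d)}_X(S)$ for a fixed polynomial $P^{(d)}$ of the form~\eqref{eq:polynomial}. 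The point is then that $P^{(d)}_X$, being built from Chern characters, is additive on exact sequences, and that the torus action on $X$ induces an action on the relevant Quot/Hilbert-type space whose fixed locus parametrises $T$-invariant subsheaves; a standard Bia\l ynicki-Birula / limit argument produces, from $S$, a $T$-invariant saturated subsheaf $S'\subset\cE$ of the same rank with $P^{(d)}_X(S')\geq P^{(d)}_X(S)$ at the leading relevant order. Concretely, one takes a one-parameter subgroup $\lambda\colon\mathbb{G}_m\to T$ generic enough that $\lim_{t\to0}\lambda(t)\cdot S$ exists in the appropriate moduli space and is $T$-fixed, uses flatness to see that the fixed Chern classes (hence $P^{(d)}_X$) are preserved under this degeneration, and finally saturates, invoking the ``adapted to torsion-free'' hypothesis to guarantee the saturation does not decrease the order-$d$ value. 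Since $S'$ is $T$-invariant, equivariant asymptotic $P_r$-stability gives $P^{(d)}_X(S')<0$ at leading order, hence $P^{(d)}_X(S)\le P^{(d)}_X(S')<0$, i.e.\ $(P_r)_X(S)<0$ for $r\gg0$, proving asymptotic $P_r$-stability.

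The step I expect to be the main obstacle is the degeneration argument producing a $T$-invariant saturated subsheaf without decreasing the leading $r$-order of $P_X$. In the classical slope case one uses that the Quot scheme of quotients of $\cE$ with fixed Hilbert polynomial is projective, so limits along one-parameter subgroups always exist and land in a $T$-fixed point, and Hilbert polynomials are locally constant in flat families; here the subtlety is twofold. First, $(P_r)_X$ is a single numerical invariant rather than a full Hilbert polynomial, so one must be careful that the degeneration is chosen to control precisely the leading-order term (a generic one-parameter subgroup in $T$, combined with the fact that at each order $P^{(d)}$ is a fixed polynomial in Chern characters, handles this, since Chern characters are deformation invariant in flat families over a connected base). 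Second, and more seriously, the flat limit of a \emph{saturated} subsheaf need not be saturated, which is exactly the phenomenon flagged in the introduction and in \cite{Delloque_adapted}; this is why the theorem is stated only for $P_r$ \emph{adapted to torsion-free sheaves}, a hypothesis precisely engineered so that re-saturating the (possibly non-saturated) $T$-invariant limit does not decrease the relevant leading-order value of $P_X$. I would isolate this re-saturation estimate as a lemma (citing the corresponding statement in \cite{Delloque_adapted}) and then the rest of the argument is the order-by-order bookkeeping sketched above; the remaining verifications — existence of limits, $T$-invariance of the fixed point, deformation-invariance of the fixed Chern classes — are routine applications of the projectivity of Quot and Bia\l ynicki-Birula, once the moduli-theoretic framework of Section~\ref{sec:Zstability} is in place.
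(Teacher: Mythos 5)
Your proposal is correct in outline but follows a genuinely different route from the paper's. The paper does not degenerate subsheaves at all: it imports from \cite{Delloque_adapted} the existence and uniqueness of Harder--Narasimhan and Jordan--H\"older filtrations for asymptotic polynomials adapted to torsion-free sheaves, together with the see-saw property of $P$-slopes, and then runs Kool's filtration argument from \cite{Koo}: uniqueness of the HN filtration forces it to be equivariant (settling the semistable case), and uniqueness of the reflexive hull of the Jordan--H\"older graded object, plus the absence of morphisms between non-isomorphic stable objects of the same slope, settles the stable case. You instead take an arbitrary saturated subsheaf $S$, degenerate the quotient $\cE/S$ along a generic one-parameter subgroup inside a projective Quot scheme to a $T$-fixed quotient, note that the kernel $S_0$ of the limit is a $T$-invariant subsheaf with $\ch(S_0)=\ch(S)$, saturate, and use adaptedness to get $P_{r,X}(S)=P_{r,X}(S_0)\leq P_{r,X}(\mathrm{Sat}(S_0))$ for $r\gg 0$, so that equivariant stability applied to the invariant saturated sheaf $\mathrm{Sat}(S_0)$ yields the desired inequality for $S$. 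This is a legitimate alternative: it trades the filtration machinery of \cite{Delloque_adapted} for Quot-scheme and Bia{\l}ynicki-Birula machinery, and it correctly isolates adaptedness as the hypothesis controlling the re-saturation step, which is exactly where general polynomials break down. Two small repairs are needed in your write-up: the coefficients $\gamma_j(r)$ in Definition~\ref{def:asymptotic_eqs} are merely functions of $r$ with prescribed leading asymptotics, not polynomials, so your ``order-by-order in $r$'' decomposition into fixed polynomials $P^{(d)}$ is not available in general --- but it is also unnecessary, since the flat degeneration preserves the Chern character on the nose and hence $P_{r,X}$ for every $r$; and your opening gloss of ``adapted to torsion-free sheaves'' as ``passing to the next relevant order'' misstates the definition (it is the requirement that subsheaves with torsion quotient have strictly smaller $P_{r,X}$ for $r$ large), although you use the correct form of the hypothesis where it actually matters.
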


As a corollary, relying on \cite{DMS}, we obtain in Section~\ref{sec:application asymptotic case} an example of a rank $3$-bundle on a $3$-dimensional manifold that is not slope polystable, but still carries $P$-critical connections. This example provides solutions to the gauge theoretical equations associated to the recent $\alpha$-semistability conditions introduced in \cite{Toma_semistab_ampleclasses}.

\subsection{Blow-ups along points}

Leaving the equivariant context, in another direction, we use blow-ups to construct $P_\delta$-positive bundles by pullbacks. Let $\pi: \tilde X \to X$  be the blow-up of a smooth projective variety of dimension $n\geq 2$ along a point and we call $D$ the exceptional divisor. Consider the polynomial
$$
P_V(\cE) = \left(\sum_{j = 0}^n [\gamma_j] \smile \Chern(\cE)\right) \frown [V],
$$
which naturally induces a polynomial on $\tilde{X}$,
$$
\tilde{P}_{0,V}(\cE) = \left(\sum_{j = 0}^n \pi^*[\gamma_j] \smile \Chern(\cE)\right) \frown [V].
$$
With this definition, it is easy to see that the pullback of any bundle $\cE$ on $X$ is not $\tilde{P}_{0,\delta}$-positive because for all $V \subset D$ sub-variety of positive dimension, $\tilde{P}_{0,V}(\pi^*\cE) = 0$. To avoid it and be able to construct $P_\delta$-positive bundles by pullback, we deform the coefficients of $\tilde{P}$,
$$
\tilde{P}_{\varepsilon,V}(\cE) = \left(\sum_{j = 0}^n (\pi^*[\gamma_j] + \varepsilon_j\cl(D)^{n - j}) \smile \Chern(\cE)\right) \frown [V].
$$
where each $\varepsilon_j$ is a small real number. When $\cE$ is a $P_\delta$-positive bundle, we look for necessary and sufficient conditions of the $\varepsilon_j$ for $\pi^*\cE$ to be $\tilde{P}_{\varepsilon,\delta}$-positive. We this, we need to introduce a uniform version of $P_\delta$-positivity.

Notice that $P_V(\cE)$ can be expressed as
$$
P_V(\cE) = \left(\sum_{j = 0}^n [\gamma_j] \smile \Chern(\cE) \smile \cl(V)\right) \frown [X].
$$
We have $\cl(V) \in H^{n - k,n - k}(X,\RR)$ where $k = \codim(V)$. We define the effective cone $\Eff_k(X)\subset H^{n - k,n - k}(X,\RR)$ by
$$
\Eff_k(X) = \left\{\sum_{i \in I} a_i\cl(V_i)\middle|I \textrm{ finite}, V_i \subset X \textrm{ of dimension } k, a_i \geq 0 \right\} ,
$$
and the pseudo-effective cone as its closure. As the effective cone has a non-empty interior in $H^{n - k,n - k}(X,\RR)$, we may extend $P_\bullet(\cE)$ to the whole $H^{n - k,n - k}(X,\RR)$ by linearity, for all $k$. Clearly, $\cE$ is (strongly) $P_\delta$-positive if and only if,
$$
P_X(\cE) = 0, \qquad \forall\, 0 <(\leq)\, k \leq n - 1,\, \forall\, c \in \Eff_k(X), \quad \delta_kP_c(\cE) > 0.
$$

\begin{definition}
    We say that $\cE$ is \textit{(strongly) uniformly $P_\delta$-positive} if
    $$
    P_X(\cE) = 0, \qquad \forall\, 0 <(\leq)\, k \leq n - 1,\, \forall\, c \in \overline{\Eff_k(X)}, \quad \delta_kP_c(\cE) > 0.
    $$
\end{definition}

Our main result on the $P_\delta$-positivity of pullbacks along blow-ups is the following.

\begin{theorem}\label{THE:P positivité pull-back}
    Let $\cE$ be a (strongly) uniformly $P_\delta$-positive vector bundle on $X$. There is a constant $\eta > 0$ only depending on $X$, the blown-up point, the $[\gamma_j]$ and $\cE$ such that for all $\varepsilon = (\varepsilon_0,\ldots,\varepsilon_n)$ with for all $k$, $|\varepsilon_k| < \eta$, we have equivalence between
    \begin{enumerate}
        \item $\pi^*\cE$ is (strongly) $\tilde{P}_{\varepsilon,\delta}$-positive.
        \item $\pi^*\cE$ is (strongly) uniformly $\tilde{P}_{\varepsilon,\delta}$-positive.
        \item $\varepsilon_0 = (-1)^n\frac{\Chern_n(\cE) \frown [X]}{\rk(\cE)}\varepsilon_n$ and for all $1 \leq k \leq n - 1$, $(-1)^k\delta_k\varepsilon_{n - k} > 0$.
    \end{enumerate}
\end{theorem}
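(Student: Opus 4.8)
The plan is to compute $\tilde{P}_{\varepsilon,\bullet}(\pi^*\cE)$ completely explicitly, using the cohomology of the blow-up, and then to split every positivity condition into an ``$X$-part'' controlled by the uniform $P_\delta$-positivity of $\cE$ and a ``$D$-part'' controlled by the single coefficient $\varepsilon_{n-k}$. First I would recall the blow-up formulae: for $1\leq k\leq n-1$ one has $H^{2(n-k)}(\tilde X,\RR)=\pi^*H^{2(n-k)}(X,\RR)\oplus\RR\,\cl(D)^{n-k}$, with $\pi_*\cl(D)^{n-k}=0$, $\cl(D)^n\frown[\tilde X]=(-1)^{n-1}$, and $\pi^*\alpha\smile\cl(D)=0$ for $\alpha\in H^{>0}(X,\RR)$; moreover $\Chern(\pi^*\cE)=\pi^*\Chern(\cE)$ and $\Chern_0(\cE)=\rk(\cE)$. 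Writing a $k$-dimensional subvariety $V\subset\tilde X$ as $\cl(V)=\pi^*\beta+t\,\cl(D)^{n-k}$ with $\beta=\pi_*\cl(V)\in\overline{\Eff_k(X)}$ and $t\in\RR$, a direct calculation then gives
\[
\tilde{P}_{\varepsilon,V}(\pi^*\cE)=P_\beta(\cE)+\varepsilon_n\,\bigl(\Chern_k(\cE)\smile\beta\bigr)\frown[X]+(-1)^{n-1}\rk(\cE)\,\varepsilon_{n-k}\,t ,
\]
and the right-hand side, read as a function of $c=\pi^*\beta+t\,\cl(D)^{n-k}$, is exactly the linear extension of $\tilde{P}_{\varepsilon,\bullet}(\pi^*\cE)$ to $H^{2(n-k)}(\tilde X,\RR)$. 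Likewise one gets $\tilde{P}_{\tilde X}(\pi^*\cE)=(-1)^{n-1}\rk(\cE)\varepsilon_0+\varepsilon_n\bigl(\Chern_n(\cE)\frown[X]\bigr)$ and $\tilde{P}_{\varepsilon,\mathrm{pt}}(\pi^*\cE)=\rk(\cE)(\gamma_n+\varepsilon_n)$, since every point of $\tilde X$ has class $\pi^*[\mathrm{pt}]$.

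With these formulae in hand, the implication (2)$\Rightarrow$(1) is immediate, as subvariety classes lie in $\overline{\Eff_k(\tilde X)}$. For (1)$\Rightarrow$(3): the normalisation $\tilde{P}_{\tilde X}(\pi^*\cE)=0$, which is part of being $\tilde{P}_{\varepsilon,\delta}$-positive, reads off as $\varepsilon_0=(-1)^n\tfrac{\Chern_n(\cE)\frown[X]}{\rk(\cE)}\varepsilon_n$; and testing (1) against a linear subspace $\PP^k\subset\PP^{n-1}=D$ (whose class is $(-1)^{n-1-k}\cl(D)^{n-k}$, so $\beta=0$ and $t=(-1)^{n-1-k}$) gives $\delta_k\tilde{P}_{\varepsilon,V}(\pi^*\cE)=(-1)^k\delta_k\rk(\cE)\varepsilon_{n-k}>0$, i.e.\ $(-1)^k\delta_k\varepsilon_{n-k}>0$ for each $1\le k\le n-1$.

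The core is (3)$\Rightarrow$(2). Fix $1\leq k\leq n-1$ and $c=\pi^*\beta+t\,\cl(D)^{n-k}\in\overline{\Eff_k(\tilde X)}\setminus\{0\}$, so that $\beta=\pi_*c\in\overline{\Eff_k(X)}$. Uniform $P_\delta$-positivity of $\cE$ gives, by compactness of the unit slice of $\overline{\Eff_k(X)}$ and homogeneity, a constant $c_0>0$ with $\delta_kP_\beta(\cE)\geq c_0\norm{\beta}$, while $\abs{(\Chern_k(\cE)\smile\beta)\frown[X]}\leq C_1\norm{\beta}$. If $t$ has the sign of $(-1)^{n-1-k}$ (or $t=0$), condition (3) makes the last term of $\tilde{P}_{\varepsilon,c}$ have the sign of $\delta_k$, whence $\delta_k\tilde{P}_{\varepsilon,c}(\pi^*\cE)\geq(c_0-C_1\abs{\varepsilon_n})\norm{\beta}>0$ for $\abs{\varepsilon_n}$ small (note $\beta\neq0$ if $t=0$). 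If instead $t$ has the opposite sign, the key geometric input is that $(-1)^{n-k}\cl(D)^{n-k}$ is \emph{not} pseudo-effective on $\tilde X$: taking $A$ ample on $X$ and $\omega=\pi^*A-\epsilon D$ ample on $\tilde X$ for small $\epsilon>0$, one computes $\omega^k\smile(-1)^{n-k}\cl(D)^{n-k}\frown[\tilde X]=-\epsilon^k<0$, whereas any product of $k$ nef divisor classes pairs non-negatively with $\overline{\Eff_k(\tilde X)}$. Since $\overline{\Eff_k(\tilde X)}$ is a closed cone, maps into $\overline{\Eff_k(X)}$ under $\pi_*$, and contains no positive multiple of $(-1)^{n-k}\cl(D)^{n-k}$, a rescaling argument produces a constant $C_2$ with $\abs{t}\leq C_2\norm{\beta}$ for all such $c$; hence $\delta_k\tilde{P}_{\varepsilon,c}(\pi^*\cE)\geq\bigl(c_0-C_1\abs{\varepsilon_n}-C_2\rk(\cE)\abs{\varepsilon_{n-k}}\bigr)\norm{\beta}>0$ once $\abs{\varepsilon}$ is small. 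For the strong statement one additionally tests $c=\mathrm{pt}$: $\delta_0\tilde{P}_{\varepsilon,\mathrm{pt}}(\pi^*\cE)=\delta_0\rk(\cE)(\gamma_n+\varepsilon_n)>0$ for $\abs{\varepsilon_n}<\abs{\gamma_n}$, using $\delta_0\gamma_n>0$ (which holds by strong uniform $P_\delta$-positivity of $\cE$). Taking $\eta$ to be the minimum over $k$ of the finitely many thresholds above gives the asserted constant.

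I expect the main obstacle to be the geometric lemma that $(-1)^{n-k}\cl(D)^{n-k}$ is not pseudo-effective on $\tilde X$ — equivalently, that an effective $k$-cycle with vanishing $\pi_*$ is supported on $D$ and hence lies on the ray $\RR_{\geq0}(-1)^{n-1-k}\cl(D)^{n-k}$ — together with turning this into the uniform estimate $\abs{t}\leq C_2\norm{\beta}$ via closedness of $\overline{\Eff_k(\tilde X)}$. The remainder is the sign-heavy but otherwise routine Chern-character bookkeeping on the blow-up that produces the displayed formula for $\tilde{P}_{\varepsilon,V}(\pi^*\cE)$.
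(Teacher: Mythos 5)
Your proposal is correct and follows essentially the same route as the paper: the explicit formula for $\tilde{P}_{\varepsilon,c}(\pi^*\cE)$ via the decomposition $c=\pi^*\pi_*c+t\,\cl(D)^{n-k}$, the normalisation and the linear subspaces of $D$ for (1)$\Rightarrow$(3), and for (3)$\Rightarrow$(2) the combination of the uniform lower bound $\delta_kP_{\pi_*c}(\cE)\geq\epsilon\norm{\pi_*c}$ with the one-sided estimate on the $D$-component of pseudo-effective classes. The only cosmetic difference is your justification that the wrong-sign multiple of $\cl(D)^{n-k}$ is not pseudo-effective (an explicit negative intersection with $(\pi^*A-\epsilon D)^k$), where the paper instead invokes that pseudo-effective cones contain no lines together with the effectivity of the opposite class; these are interchangeable.
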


The equality $\varepsilon_0 = (-1)^n\frac{\Chern_n(\cE) \frown [X]}{\rk(\cE)}\varepsilon_n$ comes from the condition $\tilde{P}_{\varepsilon,\tilde{X}}(\pi^*\cE) = 0$. The uniform $\tilde{P}_{\varepsilon,\delta}$-positivity of $\pi^*\cE$ with respect to sub-varieties of $X$ not included in the exceptional divisor is ensured by the uniform $P_\delta$-positivity of $\cE$ as long as the $|\varepsilon_k|$ are small enough. And the inequalities $(-1)^k\delta_k\varepsilon_{n - k} > 0$ are equivalent to $\tilde{P}_{\varepsilon,\delta}$-positivity of $\pi^*\cE$ with respect to the sub-varieties of $D$.

From Corollary~\ref{cor:intro}, on toric varieties, $P_\delta$-positivity is equivalent to its uniform version. Hence, we may apply Theorem~\ref{THE:P positivité pull-back} to the examples of $P_\delta$-positive bundles constructed in Section~\ref{sec:toric applications} to produce new examples of $P_\delta$-positive bundles of rank strictly greater than $1$ on iterated toric blow-ups of either Hirzebruch surfaces or the Fano $3$-fold considered in Proposition~\ref{prop:introFano3fold}.

 \subsection*{Organisation of the paper} In Section~\ref{sec:polynomial curv eq}, we provide the moment map interpretation of $P$-critical metrics and the associated Lagrangian. We study its second variation and obtain a local uniqueness result. Finally, we give some specific examples of $P$-critical equations, and introduce their asymptotic versions. In Section~\ref{sec:equivpos}, we give the proof of Theorem~\ref{theo:intro equiv} and discuss its applications for toric sheaves. We also provide an example of $P$-critical metric on a line bundle that is not $P_\delta$-positive. We then introduce equivariant $P$-stability and prove Theorem~\ref{theo:intro equiv Zstab asymptotic}.  Section~\ref{sec:blowups} is devoted to blow-ups, and the proof of Theorem~\ref{THE:P positivité pull-back}. In Section~\ref{sec:applications}, we derive  some applications, including an example of a Fano $3$-fold of Picard rank $3$ whose tangent bundle is $Z$-positive for the dHYM central charge.
 
\subsection*{Acknowledgements}  
The authors would like to thank Michel Brion for kindly answering their questions on rational equivalence over toric varieties and for pointing to them references \cite{Brion,FuMacStu}. They are also grateful to Ruadha\'i Dervan and Lars Martin Sektnan  for many stimulating exchanges on the topic.
R.D. and C.T. are partially supported by Centre Henri Lebesgue ANR-11-LABX-0020-01, and the grants MARGE ANR-21-CE40-0011 and BRIDGES ANR--FAPESP ANR-21-CE40-0017.
A.N. is supported by the FAPESP post-doctoral grant number 2023/06502-0.
C.S. is supported by a MSCA Postdoctoral Fellowship {\euflag} under grant agreement No 101149320.

\section{Polynomial curvature equations}
\label{sec:polynomial curv eq}

\subsection{A moment map picture}\label{sec:momentmap}

An unpleasant feature of~\eqref{eq:polynomial_eq} is that it might fail to be elliptic. To understand the situations in which the equation is elliptic at least near a solution, consider the linearisation of the operator $h \mapsto\e^{\curvform(h)}$, defined on the space $\Herm(E)$ of Hermitian metrics on $E$.

Taking a one-parameter path of metrics $h_t=\e^{tV}h$, we have
\begin{equation}
    \partial_t\curvform(h_t)=\frac{\I}{2\pi}\left(\nabla^{0,1}\nabla^{1,0}-\nabla^{1,0}\nabla^{0,1}\right)V
\end{equation}
and our polynomial operator linearises as
\begin{equation}
    \partial_{t=0}P(\curvform(h_t))=\sum_{j=1}^n\gamma_j\wedge\frac{1}{j!}\sum_{1\leq p\leq j}\curvform(h)^{p-1}\wedge(\partial_{t=0}\curvform(h_t))\wedge\curvform(h)^{j-p}.
\end{equation}
We can obtain a more compact expression by considering, as in \cite{DMS}, the multi-linear product on $\Alt^\bullet(\End E)$ defined by
\begin{equation}
    \left[A_1\wedge\dots\wedge A_j\right]_{\sym}\coloneqq \frac{1}{j!}\sum_{\sigma\in S_j}(-1)^{\mathrm{gr\,sgn}(\sigma)}A_{\sigma(1)}\wedge\dots\wedge A_{\sigma(j)}
\end{equation}
where the graded sign of a permutation, $\mathrm{gr\,sgn}(\sigma)$, is sign obtained by permuting $A_1\wedge\dots A_j$ to $A_{\sigma(1)}\wedge\dots\wedge A_{\sigma(j)}$ as differential forms (so, ignoring $\End E$ factors). By a slight abuse of notation then the linearisation of $P(\curvform(h))$ becomes
\begin{equation}
    \partial_{t=0}P(\curvform(h_t))=\sum_{j=1}^n\gamma_j\wedge\frac{1}{(j-1)!}\big[\curvform(h_0)^{j-1}\wedge\partial_{t=0}\curvform(h_t)\big]_{\sym}.
\end{equation}
As all the $\gamma_j$ $\wedge$-commute with endomorphism-valued forms, the linearisation can be expressed simply as
\begin{equation}\label{eq:lin_first}
    \partial_{t=0}P(\curvform(h_t))=[P'(\curvform(h))\wedge\partial_{t=0}\curvform(h_t)]_{\sym}.
\end{equation}

\begin{definition}[\cite{DMS}]
    We say that a connection $\nabla$ on $E$ is \emph{$P$-positive} if the $2(n-1)$ $\End E$-valued form $P'(\curvform(\nabla))$ is positive definite, i.e.\ for any $p\in X$ and any non-zero $\xi\in T^{0,1}_p{}^*X\times\End E_p)$
    \begin{equation}
        \Tr\I\left[P'(\curvform(\nabla))\wedge \xi^*\wedge \xi\right]_{\sym}>0.
    \end{equation}
    A Hermitian metric is said to be $P$-positive if its associated Chern connection is, and the set of all $P$-positive Hermitian metrics on $E$ will be denoted by $\HermPos_P(E)$.
\end{definition}
Reasoning as in \cite[Lemma $2.36$]{DMS} one readily obtains
\begin{lemma}
	If $\nabla$ is a $P$-positive connection on $E$, then the $P$-critical equation is elliptic at $\nabla$.
\end{lemma}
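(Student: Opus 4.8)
The plan is to compute the principal symbol of the linearisation of the $P$-critical operator and to recognise it, up to a positive factor, as the bilinear form appearing in the definition of $P$-positivity. Regard $P(\curvform(h))=0$ as a second order equation for the metric $h\in\Herm(E)$; by~\eqref{eq:lin_first} its linearisation along $h_t=\e^{tV}h$ is
\[
L(V)=\big[P'(\curvform(\nabla))\wedge\partial_{t=0}\curvform(h_t)\big]_{\sym},\qquad \partial_{t=0}\curvform(h_t)=\tfrac{\I}{2\pi}\big(\nabla^{0,1}\nabla^{1,0}-\nabla^{1,0}\nabla^{0,1}\big)V .
\]
Since $\curvform(h)$, hence $P(\curvform(h))$, is valued in Hermitian endomorphisms when $h$ is a metric, $L$ maps $\Herm(E)$ to $\Alt^{n,n}(\Herm(E))$, two real vector bundles of the same rank $\rk(E)^2$; so ellipticity of the equation at $\nabla$ is equivalent to injectivity of the principal symbol of $L$ in every non-zero real cotangent direction.

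First I would compute that symbol. For a real covector $\eta=\eta^{1,0}+\eta^{0,1}$ with $\eta^{0,1}=\overline{\eta^{1,0}}$, the principal symbols of $\nabla^{1,0}$ and $\nabla^{0,1}$ are $\I\,\eta^{1,0}\wedge(\cdot)$ and $\I\,\eta^{0,1}\wedge(\cdot)$; since a $(1,0)$- and a $(0,1)$-covector anticommute, the operator $V\mapsto\partial_{t=0}\curvform(h_t)$ has principal symbol $V\mapsto\Omega_\eta\otimes V$ with $\Omega_\eta\coloneqq\tfrac1\pi\,\I\,\eta^{1,0}\wedge\eta^{0,1}$, a real $(1,1)$-form that is positive semidefinite of rank one and non-zero as soon as $\eta\neq0$. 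As the $\gamma_j$ and $\Omega_\eta$ are scalar forms they pull out of the $\sym$-bracket, whence
\[
\sigma_\eta(L)(V)=\Omega_\eta\wedge\big[P'(\curvform(\nabla))\wedge V\big]_{\sym}.
\]

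Then I would deduce injectivity from $P$-positivity. Fix $\eta\neq0$ and suppose $\sigma_\eta(L)(V)=0$ for some Hermitian $V$. Pairing with $V$ by the trace and setting $\xi\coloneqq\eta^{0,1}\otimes V\in T^{0,1}_p{}^*X\otimes\End E_p$, so that $\xi^*=\eta^{1,0}\otimes V$, the crucial point is the identity
\[
\pi\,\Tr\!\big(V\,\sigma_\eta(L)(V)\big)=\Tr\,\I\big[P'(\curvform(\nabla))\wedge\xi^*\wedge\xi\big]_{\sym}.
\]
If $V\neq0$ then $\xi\neq0$ (recall $\eta\neq0$), so by $P$-positivity the right-hand side is a strictly positive multiple of the orientation form; hence $V=0$. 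Thus $\sigma_\eta(L)$ is injective, and being a map between bundles of equal rank it is an isomorphism, so $L$ is elliptic at $\nabla$.

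The only step that is not purely formal is the displayed trace identity, which is the content of the symbol computation in \cite[Lemma~2.36]{DMS}. To prove it one writes $P'(\curvform)=\sum_j\tfrac{\gamma_j}{(j-1)!}\,\curvform^{j-1}$ and notes that $\big[\curvform^{j-1}\wedge\xi^*\wedge\xi\big]_{\sym}$ symmetrises the two odd-degree factors $\xi^*,\xi$ among the $j-1$ copies of $\curvform$, the only non-trivial graded sign being the one from a transposition of $\xi$ and $\xi^*$; a short bookkeeping of placements and signs shows that both orderings $\xi^*\!\cdots\xi$ and $\xi\cdots\xi^*$ contribute $+(\eta^{1,0}\wedge\eta^{0,1})\wedge\curvform^{a}V\curvform^{b}V\curvform^{c}$ with $a+b+c=j-1$, while $\Omega_\eta\wedge\big[P'(\curvform)\wedge V\big]_{\sym}$ expands into terms $\Omega_\eta\wedge\curvform^{p}V\curvform^{j-1-p}V$. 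Taking traces and using trace cyclicity together with the commutativity of the even-degree forms, both sides reduce to $\big(\I\,\eta^{1,0}\wedge\eta^{0,1}\big)\wedge\sum_j\gamma_j\sum_{m=0}^{j-1}c_{j,m}\,\Tr\!\big(\curvform^{m}V\curvform^{j-1-m}V\big)$, and the combinatorial coefficients are seen to agree once one symmetrises in $m\leftrightarrow j-1-m$ (using $\Tr(\curvform^{m}V\curvform^{j-1-m}V)=\Tr(\curvform^{j-1-m}V\curvform^{m}V)$). I expect this bookkeeping to be the main, and only real, obstacle.
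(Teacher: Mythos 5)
Your argument is correct and is essentially the one the paper invokes by citing \cite[Lemma 2.36]{DMS}: compute the principal symbol of the linearisation~\eqref{eq:lin_first}, pair it against $V$ to recognise the $P$-positivity pairing with $\xi=\eta^{0,1}\otimes V$, and conclude injectivity (hence bijectivity, the source and target having equal rank). The sign bookkeeping you flag does close as you expect, since the graded sign from transposing $\xi$ and $\xi^*$ cancels against $\eta^{0,1}\wedge\eta^{1,0}=-\eta^{1,0}\wedge\eta^{0,1}$.
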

This positivity condition is also crucial for the moment map interpretation of the $P$-critical equation, as remarked in \cite{DMS}.
\begin{proposition}[\cite{DMS}]\label{prop:momentmap}
	Fix some $h_0\in\Herm(E)$, and let $\Holstr$ be the set of $h_0$-unitary connections on $E$. The action of the $h_0$-unitary gauge group on $\Holstr$ is Hamiltonian with respect to the Hermitian pairing
	\begin{equation}\label{eq:momentmap_pairing}
	    \langle a,b\rangle=-\I\int_X\Tr\left[P'(\curvform(\nabla))\wedge a\wedge b^*\right]_{\sym},
	\end{equation}
	and a moment map is given by $\nabla\mapsto P(\curvform(\nabla))$.
\end{proposition}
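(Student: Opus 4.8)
The plan is to follow the moment map picture for $Z$-critical connections in \cite{DMS}, verifying along the way that it depends on the polynomial only through the closedness of the forms $\gamma_j$, which we assume. First I would fix $h_0\in\Herm(E)$ and identify $\Holstr$ with the space of Dolbeault operators on $E$ via $\nabla\mapsto\nabla^{0,1}$, so that $\Holstr$ becomes an affine space modelled on $\Alt^{0,1}(\End E)$ and the complex structure $J$ on $\Holstr$ is multiplication by $\I$ on this model, equivalently $J$ acts as $\pm\I$ on the $(1,0)$- and $(0,1)$-components of $\u(E,h_0)$-valued $1$-forms. Only the $(1,1)$-part of $\curvform(\nabla)$ enters the formulas below (products of top total degree with the $\gamma_j$ involve nothing else), so one may work on the locus of integrable connections or on all of $\Holstr$ indifferently. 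Using the symmetry properties of the Chern curvature and of the graded-symmetric product $[\,\cdot\,]_{\sym}$, one checks that~\eqref{eq:momentmap_pairing} is a Hermitian, $J$-invariant pairing on each tangent space; it thus defines a (possibly degenerate) Riemannian metric $g_\nabla$ and a $2$-form $\omega_\nabla$, its imaginary part, with $\omega_\nabla(\,\cdot\,,\,\cdot\,)=g_\nabla(J\,\cdot\,,\,\cdot\,)$. This form is non-degenerate precisely over $\HermPos_P(E)$ — which is why the genuine Kähler picture lives there — but for the Hamiltonian formalism I only need $\omega$ closed together with the moment map identity.

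Second, I would show that $\omega$ is closed. Since $\Holstr$ is affine, $\dd\omega$ can be evaluated on constant vector fields $a,b,c\in\Alt^1(\u(E,h_0))$, where it reduces to the cyclic sum $a\!\cdot\!\omega(b,c)-b\!\cdot\!\omega(a,c)+c\!\cdot\!\omega(a,b)$. Along $\nabla_t=\nabla+ta$ one has $\partial_{t=0}\curvform(\nabla_t)=\tfrac{\I}{2\pi}d_\nabla a$, so by~\eqref{eq:lin_first} applied to $P'$ in place of $P$, $\partial_{t=0}P'(\curvform(\nabla_t))=\bigl[P''(\curvform(\nabla))\wedge\tfrac{\I}{2\pi}d_\nabla a\bigr]_{\sym}$; hence each term of the cyclic sum is an integral over $X$ of (the real part of) $\Tr$ of a graded-symmetric product of $P''(\curvform(\nabla))$ with one $d_\nabla$-differentiated and two undifferentiated slots. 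I would then apply Stokes' theorem to $\int_X\dd\,\Tr\bigl[P''(\curvform(\nabla))\wedge a\wedge b\wedge c^{*}\bigr]_{\sym}=0$ and use the Bianchi identity in the form $d_\nabla P''(\curvform(\nabla))=0$ — valid exactly because $P''$ is a polynomial in $\curvform(\nabla)$ with \emph{closed} coefficients and $d_\nabla\curvform(\nabla)=0$ — to rewrite the three contributions and cancel them. This is the computation of \cite[Section~2]{DMS}.

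Third, the moment map identity. The Lie algebra of the $h_0$-unitary gauge group $\mathcal{G}$ is $\Alt^0(\u(E,h_0))$, and $\xi\in\Lie\mathcal{G}$ generates on $\Holstr$ the vector field $X_\xi$ with value $-d_\nabla\xi$ at $\nabla$. Using the $\Tr$-pairing (normalised to be real-valued) between $\Lie\mathcal{G}$ and top-degree $\End E$-valued forms, view $\hat\mu(\nabla)=P(\curvform(\nabla))$ as an element of $\Lie\mathcal{G}^{*}$. On one hand, by~\eqref{eq:lin_first}, $\dd\langle\hat\mu,\xi\rangle(a)$ is the integral over $X$ of $\Tr$ of $\bigl[P'(\curvform(\nabla))\wedge\tfrac{\I}{2\pi}d_\nabla a\bigr]_{\sym}$ against $\xi$. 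On the other hand, $\iota_{X_\xi}\omega$ evaluated on $a$ is, up to the same constant, an integral over $X$ of $\Tr$ of a graded-symmetric product of $P'(\curvform(\nabla))$ with $d_\nabla\xi$ and $a$. Since $\xi$ is a $0$-form, Stokes applied to $\int_X\dd\,\Tr\bigl[P'(\curvform(\nabla))\wedge\xi\wedge a\bigr]_{\sym}=0$ together with $d_\nabla P'(\curvform(\nabla))=0$ identifies these two expressions after matching signs and constants, giving $\dd\langle\hat\mu,\xi\rangle=\iota_{X_\xi}\omega$. Equivariance is immediate since $\curvform(g\cdot\nabla)=g\,\curvform(\nabla)g^{-1}$ for a gauge transformation $g$ and $\Tr$ is conjugation-invariant, so $\hat\mu(g\cdot\nabla)=\Ad^{*}_{g}\hat\mu(\nabla)$.

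The hard part is the closedness of $\omega$: the cancellation in the cyclic sum requires careful tracking of the signs produced by the graded-symmetric products $[\,\cdot\,]_{\sym}$ as the $d_\nabla$-derivative is moved across the undifferentiated slots and across $P''(\curvform(\nabla))$. As already indicated, this is precisely the corresponding computation in \cite{DMS}, whose only input on the coefficient forms is their closedness; since we keep that hypothesis, the argument transfers with no change, and the remaining steps are routine.
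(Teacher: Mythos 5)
Your proposal is correct and follows essentially the same route as the paper, which itself simply defers to the computation of \cite[Theorem 2.45]{DMS} after observing that the only property of the coefficients used there is their closedness. Your sketch fills in the standard steps (affine structure on $\Holstr$, closedness of the $2$-form via Stokes and the Bianchi identity $d_\nabla P''(\curvform(\nabla))=0$, and the moment map identity for $\xi\mapsto -d_\nabla\xi$) in exactly the way that adaptation proceeds.
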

Note that the pairing~\eqref{eq:momentmap_pairing} is positive precisely when $\nabla$ is $P$-positive. This moment map description of the $P$-critical metrics is a straightforward adaptation of \cite[Theorem $2.45$]{DMS}.

\subsection{A Lagrangian for $P$-critical metrics}\label{sec:Donaldson_functional}

Given a polynomial $P(x)$ as in~\eqref{eq:polynomial}, we now describe a modified version of Donaldson's Lagrangian for the Hermite-Einstein equation, whose Euler-Lagrange equation is the $P$-critical equation~\eqref{eq:polynomial_eq}.

\smallbreak

We start by collecting a few basic facts about the Bott-Chern representatives of secondary classes, also known as Chern-Simons classes, from \cite[\S$1.2$]{Donaldson85}.

\begin{proposition}\label{prop:ChernSimons}
    Consider a holomorphic vector bundle $E\to X$ of rank $r$, an integer $0\leq p\leq r$, and a Hermitian metric $h_0$ on $E$. Then, for any $p$-linear, totally symmetric, and $\mrm{GL}(r,\mathbb{C})$-invariant function $\varphi_p:\mathfrak{gl}(r,\mathbb{C})^p\to\mathbb{C}$, there is a functional
    \begin{equation}
    \begin{split}
        R:\Herm(E)&\to\frac{\Alt^{p-1,p-1}(X)}{\range\del+\range\delbar}\\
        h&\mapsto R(h,h_0)
    \end{split}
    \end{equation}
    with the following properties:
    \begin{enumerate}
        \item $R(h_0,h_0)=0$ and $R(h_2,h_0)=R(h_2,h_1)+R(h_1,h_0)$;
        \item $\partial_tR(h_t,h_0)=- p\,\varphi_p(h_t^{-1}\dot{h}_t,\curvform(h_t),\dots,\curvform(h_t))$;
        \item $\I\del\delbar R(h_1,h_0)=\varphi_p(\curvform(h_1),\dots,\curvform(h_1))-\varphi_p(\curvform(h_0),\dots,\curvform(h_0))\in\Alt^{p,p}(X)$.
    \end{enumerate}
\end{proposition}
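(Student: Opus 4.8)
The plan is to construct $R(h,h_0)$ by integration along a path of metrics and to verify the three properties by transgression. Since $\Herm(E)$ is an open convex cone (in particular path-connected and simply connected), we may join $h_0$ to $h$ by a smooth path $h_t$, $t\in[0,1]$, with $h_0=h_{t=0}$ and $h=h_{t=1}$, and \emph{define}
\[
R(h,h_0)\coloneqq -p\int_0^1\varphi_p\big(h_t^{-1}\dot h_t,\curvform(h_t),\dots,\curvform(h_t)\big)\,\dd t\ \in\ \Alt^{p-1,p-1}(X).
\]
Property~(2) is then immediate from the fundamental theorem of calculus, for a fixed path; the whole subtlety is to show that the class of $R(h,h_0)$ modulo $\range\del+\range\delbar$ does not depend on the chosen path. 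Once that is known, Property~(1) follows formally: the constant path gives $R(h_0,h_0)=0$, and concatenating a path from $h_0$ to $h_1$ with one from $h_1$ to $h_2$ splits the integral and yields the cocycle relation $R(h_2,h_0)=R(h_2,h_1)+R(h_1,h_0)$.

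For Property~(3), I would differentiate $t\mapsto\varphi_p(\curvform(h_t),\dots,\curvform(h_t))$, using total symmetry of $\varphi_p$ to get $p\,\varphi_p\big(\partial_t\curvform(h_t),\curvform(h_t),\dots,\curvform(h_t)\big)$, and then substitute the variational formula for the normalised Chern curvature, namely $\partial_t\curvform(h_t)=c\,\delbar^{\nabla_{h_t}}\del^{\nabla_{h_t}}(h_t^{-1}\dot h_t)$ for a universal constant $c$, where $\del^{\nabla_{h_t}},\delbar^{\nabla_{h_t}}$ are the components of the Chern connection $\nabla_{h_t}$ acting on $\End E$-valued forms. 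The key algebraic input is the \emph{transgression identity}: for any connection $\nabla$ on $E$, any $\End E$-valued form $\alpha$, and any $\mathrm{GL}(r,\C)$-invariant totally symmetric $\varphi_p$,
\[
\varphi_p\big(\del^{\nabla}\alpha,F(\nabla),\dots,F(\nabla)\big)=\del\,\varphi_p\big(\alpha,F(\nabla),\dots,F(\nabla)\big),\qquad
\varphi_p\big(\delbar^{\nabla}\alpha,F,\dots,F\big)=\delbar\,\varphi_p(\alpha,F,\dots,F),
\]
which follows from the polarised infinitesimal invariance of $\varphi_p$, the Bianchi identities $\del^{\nabla}F=\delbar^{\nabla}F=0$, and a splitting into $(p,q)$-types. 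Applying this twice turns $p\,\varphi_p(\partial_t\curvform(h_t),\dots)$ into $\I\del\delbar\big(\partial_t R(h_t,h_0)\big)$ once $c$ and the normalisation of $\varphi_p$ are matched to those of \cite[\S1.2]{Donaldson85}, and integrating over $t\in[0,1]$ gives Property~(3).

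The main obstacle is the path-independence, i.e.\ well-definedness modulo $\range\del+\range\delbar$. For this I would take two paths $h^{(0)}_t$ and $h^{(1)}_t$ from $h_0$ to $h_1$, embed them in a two-parameter family $h_{t,u}$ with $h_{t,0}=h^{(0)}_t$, $h_{t,1}=h^{(1)}_t$, and $h_{0,u}\equiv h_0$, $h_{1,u}\equiv h_1$, and compute $\partial_u$ of the defining integral along $t\mapsto h_{t,u}$. The computation uses the Maurer--Cartan-type identity $\del_u(h^{-1}\del_t h)-\del_t(h^{-1}\del_u h)=[\,h^{-1}\del_t h,\,h^{-1}\del_u h\,]$ together with the $u$-direction curvature variation, then the transgression identities above to pull $\del$ and $\delbar$ outside $\varphi_p$, and finally an integration by parts in $t$ whose boundary terms at $t=0,1$ vanish because $h_{0,u},h_{1,u}$ are constant in $u$. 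One is left with $\partial_u R(h_{\cdot,u},h_0)\in\range\del+\range\delbar$, so the class is constant in $u$. (Alternatively, one pulls $E$ back to $X\times\Delta$, equips it with a metric interpolating $h_0$ and $h$ along the disc, and realises $R(h,h_0)$ as a fibre integral of the genuinely defined closed form $\varphi_p(F_H,\dots,F_H)$; Stokes on the family then makes both Property~(3) and path-independence transparent, at the cost of handling cut-offs near the boundary of $\Delta$.) The only point requiring care beyond the classical scalar case of Bott--Chern is that the bookkeeping is done with $\End E$-valued forms; this causes no trouble since the transgression identities and the graded-symmetric product are compatible. All of this is carried out in \cite[\S1.2]{Donaldson85}, to which I would ultimately refer for the detailed verification.
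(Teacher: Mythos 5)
Your proposal is correct: it reconstructs the standard transgression/Bott--Chern construction (path integral definition, path-independence modulo $\range\del+\range\delbar$ via a two-parameter family, and the invariance/Bianchi identities for Property~(3)), which is precisely the argument of \cite[\S1.2]{Donaldson85}. The paper offers no independent proof of this proposition --- it explicitly presents it as a collection of facts imported from that reference --- so your sketch matches the intended source and approach.
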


We will be mostly interested with a representative $\BC(h,h_0)$ of the \emph{Chern-Simons character}, that satisfies
\begin{equation}
    2\pi\I\del\delbar\BC(h,h_0)=\Tr\big(\e^{\curvform(h)}-\e^{\curvform(h_0)}\big).
\end{equation}
We can define $\BC(h,h_0)$ as follows: for any $j\geq 0$, consider the symmetric multilinear functions on $\mathfrak{gl}(r,\mathbb{C})$ defined by
\begin{equation}
    \varphi_j(A_1,\dots,A_j)=\frac{1}{2\pi}\Tr\left[A_1 \wedge \dots \wedge A_j\right]_{\sym}.
\end{equation}
Consider the Chern-Simons classes corresponding to these $\varphi_j$, and their representatives $R_j(h,h_0)$. The total Chern-Simons class is represented by
\begin{equation}
    \BC(h,h_0)\coloneqq\textstyle\sum\nolimits_{j\geq 0}\frac{1}{j!}R_j(h,h_0)
\end{equation}
and taking the derivative along a path of metrics $h_t$ one obtains
\begin{equation}
    \partial_t\BC(h_t,h_0)=
    -\frac{1}{2\pi}\Tr\left(h_t^{-1}\dot{h}_t\cdot\e^{\curvform(h_t)}\right).
\end{equation}
Given a reference metric $h_0$ and a polynomial $P(x)=\sum\gamma_j\wedge\frac{1}{j!}x^j$, we consider the $P$-critical Lagrangian defined in Proposition~\ref{prop:Lagrangian}:
\begin{equation}
    \PLagr(h,h_0)=2\pi\int_X{\textstyle\sum_{j\geq 0}}\gamma_j\wedge\frac{1}{j!}R_j(h,h_0)=2\pi\int_X\left(\textstyle\sum_{j\geq 0}\gamma_j\right)\wedge\BC(h,h_0).
\end{equation}
Then, the differential of $\PLagr(h)$ is
\begin{equation}
\begin{split}
    \partial_{t=0}\PLagr(h+t\dot{h},h_0)=&2\pi\int_X\left(\textstyle\sum_{j\geq 0}\gamma_j\right)\wedge\partial_{t=0}\BC(h+t\dot{h},h_0)=\\
    =&-\int_X\Tr\left(h^{-1}\dot{h} (\textstyle\sum_{j\geq 0}\gamma_j) \e^{\curvform(h)}\right)=-\int_X\Tr\left(h^{-1}\dot{h}\,P(\curvform(h))\right).
\end{split}
\end{equation}
This computation shows that the Euler-Lagrange equation of $\PLagr$ is the $P$-critical equation~\eqref{eq:polynomial_eq}, proving Proposition~\ref{prop:Lagrangian}. Recalling the moment map description of the $P$-critical equation we have the following, along the lines of \cite[\S$4$]{Donaldson85}.
\begin{proposition}
    Up to the possible addition of a constant, $\PLagr$ is the Kempf-Ness functional for the Hamiltonian action of Proposition~\ref{prop:momentmap}.
\end{proposition}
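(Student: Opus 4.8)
The plan is to verify that $\PLagr(\cdot,h_0)$ has the two defining properties of a Kempf-Ness functional and then conclude by uniqueness up to an additive constant. Recall the abstract picture: for a Hamiltonian action of a group $K$ on a K\"ahler manifold, with moment map $\mu$ and complexification $G=K^{\C}$, and with a chosen basepoint, the associated Kempf-Ness functional is the function on the complexified orbit, well defined up to an additive constant, whose differential along a path of the form $t\mapsto\exp(t\I\xi)\cdot x$ with $\xi\in\Lie(K)$ equals $-\langle\mu(\exp(t\I\xi)\cdot x),\xi\rangle$; equivalently, it is a primitive along the orbit of the canonical $1$-form attached to $\mu$ and the infinitesimal $G$-action, and it satisfies the cocycle relation governing changes of basepoint. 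In the situation of Proposition~\ref{prop:momentmap} one takes $G=\cG^{\C}$ the complex gauge group, $K$ the $h_0$-unitary gauge group acting on $\Holstr$, and $\mu(\nabla)=P(\curvform(\nabla))$ paired with $\Lie(K)$ via~\eqref{eq:momentmap_pairing}.

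First I would recall, following \cite[\S4]{Donaldson85} and its adaptation in \cite{DMS}, the standard dictionary identifying the complexified orbit with $\Herm(E)$: every $h\in\Herm(E)$ equals $g^\ast h_0$ for $g=\sqrt{h_0^{-1}h}\in\cG^{\C}$, which exhibits $\Herm(E)\cong\cG^{\C}/K$ and transports the $\cG^{\C}$-action on $\Holstr$ (through holomorphic structures and the Chern correspondence) to a transitive action on $\Herm(E)$. Under this dictionary, a path $h_t=\e^{tV}h$ with $V$ an $h$-self-adjoint endomorphism corresponds to moving in a non-compact direction $\I\xi$, $\xi\in\Lie(K)$; and, up to the gauge conjugation intrinsic to the dictionary (which leaves unchanged all the traces entering either side), the family $t\mapsto P(\curvform(h_t))$ represents $\mu$ along this path. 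Consequently the defining $1$-form of the Kempf-Ness functional sends $\partial_{t=0}h_t$ to $-\int_X\Tr\big(V\,P(\curvform(h))\big)$, up to the sign and normalisation fixed by~\eqref{eq:momentmap_pairing} and the $\tfrac{\I}{2\pi}$ in the definition of $\curvform$.

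Next I would match this against $\PLagr$. The first-variation computation carried out just above gives $\partial_{t=0}\PLagr(h+t\dot h,h_0)=-\int_X\Tr\big(h^{-1}\dot h\,P(\curvform(h))\big)$ for an arbitrary variation $\dot h$, so the differential of $\PLagr(\cdot,h_0)$ is exactly the defining $1$-form above; and the cocycle relation $\PLagr(h_2,h_0)=\PLagr(h_2,h_1)+\PLagr(h_1,h_0)$ follows from the additivity of the Bott-Chern forms $R_j$, Proposition~\ref{prop:ChernSimons}(1), hence of $\BC$ and of $\PLagr$. Since $\Herm(E)$ is connected, two functionals with the same differential differ by an additive constant, which is the assertion. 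Note that the one potential obstruction in infinite dimensions, namely the existence of a global primitive of the moment-map $1$-form, does not arise here precisely because $\PLagr$ provides such a primitive.

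The only genuine work lies in the second paragraph: spelling out the infinitesimal $\cG^{\C}$-action on $\Holstr$ in terms of tangent vectors to $\Herm(E)$, tracking the conjugation that appears, and checking that the $\tfrac{\I}{2\pi}$-normalisation of $\curvform$ together with the signs in~\eqref{eq:momentmap_pairing} are consistent with the derivative of $\BC$ computed above. This is routine and parallels \cite[\S4]{Donaldson85} essentially verbatim; it is the same verification underlying the moment map statement of Proposition~\ref{prop:momentmap}.
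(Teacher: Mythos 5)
Your argument is correct and is essentially the paper's own (largely implicit) proof: the paper establishes the proposition by the preceding first-variation computation showing $\partial_{t=0}\PLagr(h+t\dot h,h_0)=-\int_X\Tr\big(h^{-1}\dot h\,P(\curvform(h))\big)$, together with the cocycle property from Proposition~\ref{prop:ChernSimons}(1) and the standard identification of $\Herm(E)$ with the complexified orbit as in \cite[\S4]{Donaldson85}. You have simply spelled out the same steps in more detail.
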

In particular, we expect $\PLagr$ to be a \emph{convex} functional, in some sense, at least on the space $\HermPos_P(E)$ of $P$-positive metrics on $E$. 

To confirm this expectation, let us start by computing the second variation of the $\PLagr$ functional. We can start from the first variation of $\PLagr$ to obtain
\begin{equation}
\begin{split}
    \partial^2_t\PLagr(h_t)=&\int_X\Tr\left(-h_t^{-1} \dot{h}_t h_t^{-1} \dot{h}_t\,P(h_t)\right) +\int_X\Tr\left(h_t^{-1}\ddot{h}_t\,P(\curvform(h_t))\right)\\
    &+\int_X\Tr\left(h_t^{-1}\dot{h}_t\,\left[P'(\curvform(h))\wedge\frac{\I}{2\pi}\left(\nabla^{0,1}\nabla^{1,0}-\nabla^{1,0}\nabla^{0,1}\right)(h^{-1}_t\dot{h}_t)\right]_{\sym}\right).
\end{split}
\end{equation}
As the trace is invariant under cyclic permutations, we integrate by parts and get
\begin{equation}\label{eq:secondvar}
\begin{split}
\partial^2_t\PLagr(h_t)=&\int_X\Tr\left(h_t^{-1}(\ddot{h}_t-\dot{h}_th_t^{-1}\dot{h}_t) P(\curvform(h_t))\right)\\
&-\frac{\I}{2\pi}\int_X\Tr\left[P'(\curvform(h_t))\wedge\left(\nabla_t^{1,0}(\dot{h}_th_t^{-1})\right)^*\wedge\nabla_t^{1,0}(\dot{h}_th_t^{-1})\right]_{\sym}\\
=&\int_X\Tr\left(h_t^{-1}(\ddot{h}_t-\dot{h}_th_t^{-1}\dot{h}_t)\,P(\curvform(h_t))\right)
+\frac{1}{2\pi}\norm*{\nabla_t^{1,0}(\dot{h}_th_t^{-1})}^2_{P,h_t}
\end{split}
\end{equation}
where $\norm{\cdot}_{P,h}$ denotes the pairing in Proposition~\ref{prop:momentmap}, which is a norm provided that $h_t$ is $P$-positive. Note that~\eqref{eq:secondvar} shows that $\PLagr$ is convex around any $h\in\HermPos_P(E)$ satisfying $P(\curvform(h))=0$, i.e.\ around $P$-critical and $P$-positive Hermitian metrics.

Given any $h\in\Herm(E)$, the other Hermitian metrics on $E$ can be parametrised as $h'=\e^{A}.h\coloneqq h(\e^{A}\cdot,\e^{A}\cdot)$, for an $h$-Hermitian endomorphism $A$. If we consider a path $h_t\in\Herm(E)$ defined as $h_t=\e^{t A}.h$, we obtain for the endomorphisms $\dot{h}_t$ and $\ddot{h}_t$ the expressions
\begin{equation}
\begin{split}
\dot{h}_t=&h(\e^{tA}A\cdot,\e^{tA}\cdot)+h(\e^{tA}\cdot,\e^{tA}A\cdot)=2\,h_t(A\cdot,\cdot)\\
\ddot{h}_t=&2\,\dot{h}_t(A\cdot,\cdot)=4\,h_t(A^2\cdot,\cdot)
\end{split}
\end{equation}
so we get $\dot{h}_th_t^{-1}=2\,A$ and $\ddot{h}_th_t^{-1}=4\,A^2=(\dot{h}_th_t^{-1})^2$. Putting this together with our computation for the second variation of $\PLagr$, we conclude
\begin{lemma}\label{lemma:exponentialpaths}
Along a path of Hermitian metrics $h_t=\e^{t A}.h=h(\e^{t A}\cdot,\e^{t A}\cdot)$ for $A$ an $h$-unitary endomorphism of $E$, one has
\begin{equation}
\partial^2_t\PLagr(h_t)=\frac{2}{\pi}\norm*{\nabla_t^{1,0}A}^2_{P,h_t}
\end{equation}
with respect to the pairing~\eqref{eq:momentmap_pairing}. In particular, if $\e^{t A}.h$ is $P$-positive for all $t\geq 0$, then $\PLagr$ is convex along this path, and it is strictly convex unless $A$ is holomorphic.
\end{lemma}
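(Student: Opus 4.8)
The plan is to substitute the exponential path $h_t = \e^{tA}.h$ into the general second-variation formula~\eqref{eq:secondvar} and then extract convexity from the moment-map pairing of Proposition~\ref{prop:momentmap}. First I would settle the algebra of the path. Because $A$ commutes with $\e^{tA}$ and is $h$-self-adjoint, every metric $h_t = h(\e^{tA}\cdot,\e^{tA}\cdot)$ also makes $A$ self-adjoint, and differentiating $h_t$ (as recorded just above the statement) gives $\dot h_t h_t^{-1} = 2A$ and $\ddot h_t h_t^{-1} = 4A^2 = (\dot h_t h_t^{-1})^2$; equivalently the endomorphism $h_t^{-1}\dot h_t \equiv 2A$ is constant in $t$. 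The key point is that this forces $h_t^{-1}(\ddot h_t - \dot h_t h_t^{-1}\dot h_t) = 0$ identically, so the first integral in~\eqref{eq:secondvar}---the term proportional to $P(\curvform(h_t))$, which has no sign in general and is precisely what obstructs convexity of $\PLagr$ away from solutions---vanishes along this particular path, whether or not $h_t$ is $P$-critical. Feeding $\dot h_t h_t^{-1} = 2A$ into the surviving term of~\eqref{eq:secondvar} then yields $\partial_t^2\PLagr(h_t) = \tfrac{1}{2\pi}\norm{\nabla_t^{1,0}(2A)}^2_{P,h_t} = \tfrac{2}{\pi}\norm{\nabla_t^{1,0}A}^2_{P,h_t}$, which is the claimed identity.

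The convexity statements then follow from Proposition~\ref{prop:momentmap}. If $\e^{tA}.h$ is $P$-positive for all $t\geq 0$, the pairing~\eqref{eq:momentmap_pairing} at each $h_t$ is positive definite, so $\partial_t^2\PLagr(h_t)\geq 0$ and $\PLagr$ is convex along the path. For strictness it is enough to show that $\nabla_t^{1,0}A = 0$ forces $A$ holomorphic: taking the $h_t$-adjoint and using that $A$ is $h_t$-self-adjoint, $(\nabla_t^{1,0}A)^{*} = \nabla_t^{0,1}A = \delbar A$, and the right-hand side is independent of $t$ since the $(0,1)$-part of the Chern connection of $(E,h_t)$ is always the fixed Dolbeault operator. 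Hence if $A$ is not holomorphic then $\nabla_t^{1,0}A\neq 0$ for every $t$, and $P$-positivity makes $\norm{\cdot}_{P,h_t}$ a genuine norm, so $\partial_t^2\PLagr(h_t) > 0$ throughout, i.e.\ $\PLagr$ is strictly convex along the path.

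The only step that needs genuine care is the first one: one must check that $A$ is self-adjoint for \emph{every} $h_t$ and not merely for $h_0 = h$, and keep track of whether $h_t^{-1}$ sits on the left or on the right, so as to be sure that $\ddot h_t - \dot h_t h_t^{-1}\dot h_t$ is really annihilated after conjugation by $h_t$ (equivalently, that $h_t^{-1}\dot h_t$ is constant). Once that bookkeeping is in place, the cancellation of the $P(\curvform(h_t))$-term, the factor of $4$ from rescaling, and the positivity and strict positivity of the pairing are all formal consequences of~\eqref{eq:secondvar} and Proposition~\ref{prop:momentmap}.
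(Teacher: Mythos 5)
Your proposal is correct and follows essentially the same route as the paper: substitute $\dot h_t h_t^{-1}=2A$ and $\ddot h_t h_t^{-1}=4A^2=(\dot h_t h_t^{-1})^2$ into the second-variation formula~\eqref{eq:secondvar}, so the $P(\curvform(h_t))$-term drops out and the factor of $4$ yields the constant $\tfrac{2}{\pi}$. Your extra remarks on the $t$-independence of $h_t^{-1}\dot h_t$ and on $\nabla_t^{1,0}A=0$ forcing $A$ to be holomorphic are correct elaborations of points the paper leaves implicit.
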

It is not a priori clear if these paths lie in $\HermPos_P(E)$, even if the endpoints are $P$-positive. To establish a general uniqueness result for $P$-critical metrics it will likely be necessary to consider different paths of metrics. As shown by recent examples in \cite{BallalPingali} however, $P$-positivity might not be preserved even along a continuity path. In any case, as $P$-positivity is a local property, we deduce from Lemma~\ref{lemma:exponentialpaths} a local uniqueness result for the solutions of the $P$-critical equation.
\begin{corollary}\label{cor:uniqueness_local}
    Assume that $h\in\HermPos_P(E)$ is a $P$-critical metric. If $h'\in\Herm(E)$ is another $P$-critical metric that is sufficiently close to $h$, then $h'$ and $h$ are conjugated by a holomorphic automorphism of $E$. 
\end{corollary}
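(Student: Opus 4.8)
The plan is to combine the convexity of $\PLagr$ along geodesic-type paths (Lemma~\ref{lemma:exponentialpaths}) with the observation that $P$-positivity, being an open condition, is preserved along short enough paths connecting two nearby metrics. Concretely, write $h' = \e^{A}.h$ for a (unique) $h$-Hermitian endomorphism $A$, which is small because $h'$ is close to $h$, and consider the path $h_t = \e^{tA}.h$ for $t\in[0,1]$. Since $\HermPos_P(E)\subset\Herm(E)$ is open and $h_0 = h\in\HermPos_P(E)$, for $A$ small enough in $C^0$-norm (equivalently, $h'$ close enough to $h$) the entire path $h_t$ stays in $\HermPos_P(E)$; this uses that the path has length controlled by $\|A\|$ and that $P$-positivity is pointwise and continuous in the curvature.

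Next I would exploit the variational structure. By Proposition~\ref{prop:Lagrangian}, both $h$ and $h'$ are critical points of $\PLagr(\cdot,h_0)$ for any fixed reference $h_0$ (take $h_0 = h$). Hence the function $t\mapsto \phi(t)\coloneqq\partial_t\PLagr(h_t, h)$ satisfies $\phi(0) = \phi(1) = 0$. By Lemma~\ref{lemma:exponentialpaths}, $\phi'(t) = \partial_t^2\PLagr(h_t) = \frac{2}{\pi}\norm{\nabla_t^{1,0}A}^2_{P,h_t}\geq 0$, using that each $h_t$ is $P$-positive so that the pairing~\eqref{eq:momentmap_pairing} is a genuine norm. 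A non-negative-derivative function vanishing at both endpoints is identically zero on $[0,1]$, so $\phi'\equiv 0$, which forces $\nabla_t^{1,0}A = 0$ for all $t$; in particular $A$ is holomorphic, i.e.\ $\nabla^{1,0}_h A = 0$.

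It remains to upgrade "$A$ is a holomorphic, $h$-Hermitian endomorphism with $\e^{A}.h$ also $P$-critical" to "$h$ and $h'$ are conjugate by a holomorphic automorphism of $E$". Since $A$ is holomorphic, $g\coloneqq \e^{A/2}$ is a holomorphic automorphism of $E$ (a holomorphic section of $\mrm{GL}(E)$, invertible since $\e^{A/2}$ is), and one computes directly that $g^*h' = h'(g\cdot, g\cdot) = h(\e^{A/2}\e^{A/2}\cdot, \e^{A/2}\e^{A/2}\cdot)\cdot$— wait, more carefully: $h' = \e^{A}.h$ means $h'(u,v) = h(\e^{A}u,\e^{A}v)$, so pulling back by $g^{-1} = \e^{-A/2}$ gives $(g^{-1})^*h'(u,v) = h'(\e^{-A/2}u, \e^{-A/2}v) = h(\e^{A/2}u,\e^{A/2}v)$, which is again not literally $h$. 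The clean statement is simply that $h' = g.h$ with $g = \e^{A}$ a holomorphic automorphism, i.e.\ $h$ and $h'$ differ by the action of $g\in\Aut(E)$; this is exactly "conjugated by a holomorphic automorphism". Thus the proof concludes. The main obstacle, and the only place requiring genuine care, is the first step: verifying that the exponential path $h_t = \e^{tA}.h$ indeed remains inside $\HermPos_P(E)$ when $h'$ is sufficiently close to $h$. This is where "sufficiently close" is quantified, and one must check that smallness of $h' - h$ (in a suitable $C^\infty$ or $C^2$ topology) controls $A$ and hence the curvature forms $\curvform(h_t)$ uniformly in $t\in[0,1]$, so that the open condition $P'(\curvform(h_t)) > 0$ persists; the rest is the soft convexity argument above.
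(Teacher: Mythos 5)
Your argument is correct and is exactly the one the paper intends (the corollary is deduced from Lemma~\ref{lemma:exponentialpaths} together with the openness of $P$-positivity, precisely as you do): the exponential path stays in $\HermPos_P(E)$ for $h'$ close to $h$, the first variation vanishes at both endpoints while the second variation is nonnegative, forcing $\nabla^{1,0}A=0$, i.e.\ $A$ holomorphic and $h'=(\e^{A})^*h$. Only make sure "sufficiently close" is taken in a topology controlling two derivatives (e.g.\ $C^2$), as you yourself note at the end — your initial $C^0$ remark would not control $\curvform(h_t)$.
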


\subsection{Asymptotic regimes and some examples}\label{sec:asymptotic_regimes}

\subsubsection{Z-critical equations}
\label{sec:Zcritic example}
We are mostly interested in those polynomial equations and polynomial stability conditions that fit into the framework of \emph{$\Zch$-critical equations} introduced in \cite{DMS}.
\begin{definition}\label{def:centralcharge}
    A \emph{polynomial central charge} on $X$ is given by a triple $\Zch=(L,\rho,U)$ where $L$ is a polarisation on $X$, $\rho$ is a \emph{stability vector} and $U$ is a \emph{unipotent class}:
    \begin{itemize}
        \item $\rho=(\rho_0,\ldots,\rho_n)\in(\C^*)^{n+1}$ such that $\Im(\rho_j/\rho_{j+1})>0$ for all $j$,
        \item $U=1 + U_1+\ldots +U_n\in H^\bullet(X,\R)$ such that $U_j\in H^{j,j}(X,\R)$.
    \end{itemize}
    Given such a polynomial central charge $\Zch=(L,\rho,U)$, we define a function $Z_X$ on the Grothendieck ring of $X$ by the formula
    \begin{equation}\label{eq:polynomialchargebundles}
        \Zch_X(\cE)=\int_X \sum_{j=0}^n \rho_j c_1(L)^j\wedge U\wedge \ch(\cE).
    \end{equation}
\end{definition}
Following \cite{DMS}, given a polynomial central charge $\Zch$, we consider the following stability condition.
\begin{definition}\label{def:Zpositive}
    Let $\cE$ be a reflexive sheaf on $X$. Then $\cE$ is said to be \emph{$\Zch$-stable} (resp. $\Zch$-semistable) if for any saturated reflexive subsheaf $\cF\subset \cE$ with $0<\rank(\cF)<\rank(\cE)$ one has
    \begin{equation}\label{eq:ineqalitysheaves}
        \Im\left(\Zch_X(\cF)\overline{\Zch_X(\cE)}\right)<0\: (\text{resp.} \leq 0).
    \end{equation}
\end{definition}
This stability condition, for a vector bundle $E\to X$, is related to the existence of \emph{$\Zch$-critical metrics}, \cite{DMS,KScarpa}. More precisely, given $\Zch=(L,\rho,U)$, fix a K\"ahler representative $\omega$ of $\chern_1(L)$ and a unitary representative $u=1+u_1+\dots+u_n$ of the unitary class $U$. Then, a metric $h\in\Herm(E)$ is \emph{$\Zch$-critical} if
\begin{equation}\label{eq:Zcrit}
    \Im\left[\overline{\Zch_X(E)}\left(\textstyle\sum_{j\geq 0}\rho_j\omega^j\right) \wedge u \wedge\e^{\curvform(h)}\right]^{\text{deg. }2n\text{ part}}=0.
\end{equation}
Of course this can be regarded as an equation of the type~\eqref{eq:polynomial_eq} for a polynomial $P(x)=\sum\gamma_j\frac{1}{j!}x^j$ with coefficients
\begin{equation}\label{eq:polynomialcoeffs_Z}
    \gamma_j=\sum_{i+k=j}\Im\left(\overline{\Zch_X(E)}\rho_k\right)\omega^k\wedge u_i,
\end{equation}
with the convention that $u_0=1$. Note that the resulting polynomial, and the corresponding stability condition, depend on the topology of the bundle $E$. Thus, we denote by $P_{(Z,E)}$ the polynomial defined by the coefficients~\eqref{eq:polynomialcoeffs_Z}.
\begin{proposition}
    Let $E\to X$ be a holomorphic vector bundle, and let $\Zch$ be a polynomial central charge. Then, $E$ is $\Zch$-stable if and only if it is $P_{(Z,E)}$-stable.
\end{proposition}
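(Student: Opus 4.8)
The plan is to reduce both $\Zch$-stability and $P_{(Z,E)}$-stability to one and the same numerical inequality, by rewriting $P_X(S)$ in terms of the central charge. Working with cohomology classes, one first observes that the coefficients of $P_{(Z,E)}$ assemble into the single inhomogeneous real class
\begin{equation}
\textstyle\sum_{j\geq 0}[\gamma_j]=\Im\!\left(\overline{\Zch_X(E)}\,\textstyle\sum_{k=0}^n\rho_k\,c_1(L)^k\smile U\right),
\end{equation}
which is meaningful because $c_1(L)$ and $U$ are real classes while $\overline{\Zch_X(E)}$ is a scalar. Using the expression $P_X(S)=\bigl(\sum_{j\geq 0}[\gamma_j]\smile\ch(S)\bigr)\frown[X]$ for $P_X$ on a coherent sheaf $S$ (the sheaf-theoretic version of the identity $P_X(E)=\int_X\Tr((\sum_{j\geq 0}\gamma_j)\wedge\e^{\curvform(h)})$ recalled in the excerpt), and pulling the scalar $\overline{\Zch_X(E)}$ and the operation $\Im$ outside the cup product and the pairing with $[X]$, the definition \eqref{eq:polynomialchargebundles} of $\Zch_X$ gives at once
\begin{equation}
P_X(S)=\Im\!\left(\overline{\Zch_X(E)}\,\Zch_X(S)\right)=\Im\!\left(\Zch_X(S)\,\overline{\Zch_X(E)}\right).
\end{equation}
In particular $P_X(E)=\Im(\lvert\Zch_X(E)\rvert^2)=0$, so the normalisation hypothesis on the coefficients $\gamma_j$, required to make $P_{(Z,E)}$-stability meaningful, is automatic here.

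With this identity, the two notions become literally the same statement. By Definition~\ref{def:Zpositive}, $E$ is $\Zch$-stable precisely when $\Im(\Zch_X(\cF)\overline{\Zch_X(E)})<0$, i.e.\ $P_X(\cF)<0$, for every saturated reflexive subsheaf $\cF\subset E$ with $0<\rank(\cF)<\rank(E)$; while $E$ is $P_{(Z,E)}$-stable precisely when $P_X(S)<0$ for every coherent saturated subsheaf $S\subset E$ with $0<\rk(S)<\rk(E)$. It thus only remains to check that these two families of test subsheaves coincide. The ranges of ranks agree verbatim, and the families of subsheaves agree because of the standard fact that a saturated subsheaf of a reflexive (a fortiori locally free) sheaf over a normal variety is again reflexive; hence ``coherent saturated subsheaf of $E$'' and ``reflexive saturated subsheaf of $E$'' describe exactly the same objects. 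Therefore $\Zch$-stability and $P_{(Z,E)}$-stability are equivalent, and running the same argument with non-strict inequalities yields the equivalence of the semistable versions.

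The argument presents no real difficulty; the only points needing a little care are (i) the bookkeeping of cohomological degrees and of the complex conjugate in the derivation of $P_X(S)=\Im(\Zch_X(S)\overline{\Zch_X(E)})$ --- it is convenient, as above, to handle the whole inhomogeneous class $\sum_{j\geq 0}[\gamma_j]$ at once rather than its graded pieces --- and (ii) the appeal to reflexivity of saturated subsheaves of $E$ in order to match the test objects of the two stability conditions. Neither step uses anything beyond what the excerpt already records.
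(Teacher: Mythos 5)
Your proof is correct and follows exactly the argument the paper has in mind (the proposition is stated there without proof, as an immediate consequence of the definition of the coefficients $\gamma_j$ in~\eqref{eq:polynomialcoeffs_Z}): the key identity $P_X(S)=\Im\bigl(\Zch_X(S)\overline{\Zch_X(E)}\bigr)$ together with the standard fact that saturated subsheaves of a locally free sheaf are reflexive is all that is needed. Nothing to add.
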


The main result of \cite{DMS} is that, for simple and sufficiently smooth bundles, $\Zch$-stability characterises the existence of $\Zch$-critical metrics in an asymptotic regime known as the \emph{large volume limit}. This means that, given a central charge $\Zch=(L,\rho,U)$, one considers the stability condition and the differential equation induced by the rescaled central charge $\Zch_r=(L^r,\rho,U)$ for very large values of $r$. Under this rescalings, the coefficients $\gamma_j(r)$ of~\eqref{eq:polynomialcoeffs_Z} scale as
\begin{equation}
    \gamma_j(r)=\Im(\overline{\rho}_n\rho_{n-j})\,r^{2n-j}\,L^n\,\rk(E)\,\omega^{n-j}+\text{lower order terms},
\end{equation}
so that the equation defined by the polynomial $P_r(x)\coloneqq \sum\gamma_j(r) \frac{1}{j!} x^j$ is simply a weak Hermite-Einstein equation, to a leading order in $r$.

\subsubsection{Asymptotic stability conditions}
\label{sec:asymptotic Toma et al}
A particular case of $P$-stability has already been considered in \cite{Toma_semistab_ampleclasses}. Given a coherent sheaf $\cE$ on $X$ and \emph{ample} cycle classes $\alpha=(\alpha_0,\dots,\alpha_n)$ such that $\alpha_j\in H_{2(n-j)}(X)$, the $\alpha$-Hilbert polynomial of $E$ is defined as
\begin{equation}\label{eq:Pcondition-Toma}
    P_\alpha(\cE,m)=\sum\nolimits_{j\geq 0}\frac{1}{j!}\int_X\Chern(\cE)\alpha_j\Todd_X\,m^j.
\end{equation}
A (semi)stability condition is then defined by comparing $P_\alpha(\cE,m)$ and $P_\alpha(\cF,m)$, for $\cF\subset\cE$, for large values of $m$. This is clearly a particular case of an (asymptotic) polynomial stability condition; in fact, it is shown in \cite[Remark $2.8$]{Toma_semistab_ampleclasses} that the stability conditions defined by ample classes is a particular case of the ``Bridgeland-like'' stability conditions defined by Bayer \cite{Bayer}, albeit in a slightly more general setting than the one considered in \cite{DMS}. 

These \emph{asymptotic} stability conditions include in particular Gieseker stability; see also \cite[\S$4.2$]{KScarpa} for an approach casting Gieseker stability in terms of asymptotic $\Zch$-stability for a certain choice of central charge.

\smallbreak

We can slightly generalise the asymptotic stability notions defined by polynomial central charges \cite{DMS}, or ample classes \cite{Toma_semistab_ampleclasses}, to general polynomial stability conditions.
\begin{definition}\label{def:asymptotic_eqs}
    An \emph{asymptotic} polynomial equation for a Hermitian metric on a vector bundle $E\to X$ is a family of polynomial equations $\set*{ P_r(\curvform(h))=0 \suchthat r\in\R }$, defined by $n+1$ functions of one real variable
    \begin{equation}
        r\mapsto\gamma_j(r)\in\Alt^{n-j,n-j}(X,\R)\cap\ker\dd
    \end{equation}
    that satisfy $\bar{\gamma}_1\not\equiv 0$ and the conditions
    \begin{enumerate}
        \item $\lim_{r\to+\infty}r^{-(n-j)}\gamma_j(r)=\bar{\gamma}_j$ for every $j=1,\dots,n$;
        \item $\lim_{r\to+\infty}r^{-(n-1)}\gamma_0(r)=\bar{\gamma}_0$,
    \end{enumerate}
    where $(\bar{\gamma}_0,\dots,\bar{\gamma}_n)$ is an $(n+1)$-uple of closed differential forms on $X$, and the convergence in $(1)$ and $(2)$ is with respect to the $\m{C}^0$-norm defined by a (any) fixed Riemannian metric on $X$.
\end{definition}
These hypotheses guarantee that, at the highest order in $r$, any asymptotic polynomial equation coincides with a (weak) Hermite-Einstein equation
\begin{equation}
    \bar{\gamma}_0\otimes\Id_E+\curvform(h)\wedge\bar{\gamma}_1=0.
\end{equation}
If $\bar{\gamma}_1$ is a \emph{positive} closed $(n-1,n-1)$-form on $X$, it is possible to relate the Mumford-Takemoto (semi)stability of a vector bundle with respect to the class $[\bar{\gamma}_1]$ with both the $P_r$-stability and the existence of $P_r$-critical metrics for $r\gg 0$. The following shows that it is natural to only consider those asymptotic equations for which $\bar{\gamma}_1>0$.
\begin{lemma}
    Consider an asymptotic polynomial equation defined by the polynomials $P_r(x)$. With the notation of Definition~\ref{def:asymptotic_eqs}, $\bar{\gamma}_1$ is a positive $(n-1,n-1)$-form if and only if any connection on $E$ if $P_r$-positive for $r\gg 0$.
\end{lemma}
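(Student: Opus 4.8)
The plan is to reduce the statement to a pointwise algebraic fact by rescaling in $r$, and then to compare the resulting positivity condition with that of $\bar\gamma_1$. First I would fix \emph{any} connection $\nabla$ on $E$ and write $P_r'(x)=\gamma_1(r)+\sum_{j=2}^n\frac{1}{(j-1)!}\gamma_j(r)\wedge x^{j-1}$. Since $X$ is compact, $\curvform(\nabla)$ and all its wedge powers are bounded $\End(E)$-valued forms. Condition~(1) of Definition~\ref{def:asymptotic_eqs} gives $r^{-(n-1)}\gamma_1(r)\to\bar\gamma_1$ in $\mathcal{C}^0$, while for $2\le j\le n$
\[
r^{-(n-1)}\,\gamma_j(r)\wedge\curvform(\nabla)^{j-1}=r^{\,1-j}\bigl(r^{-(n-j)}\gamma_j(r)\bigr)\wedge\curvform(\nabla)^{j-1}\longrightarrow 0
\]
in $\mathcal{C}^0$, because $r^{-(n-j)}\gamma_j(r)$ stays bounded and $1-j<0$. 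Hence $r^{-(n-1)}P_r'(\curvform(\nabla))\to\bar\gamma_1\,\Id_E$ uniformly on $X$, for every connection $\nabla$.

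The second ingredient is the identity that, for any real $(n-1,n-1)$-form $\beta$ and any $\End(E)$-valued $(0,1)$-form $\xi$ at a point $p$,
\[
\Tr\,\I\bigl[\beta\,\Id_E\wedge\xi^*\wedge\xi\bigr]_{\sym}=\beta\wedge\bigl(\I\,\Tr(\xi^*\wedge\xi)\bigr).
\]
Indeed $\beta$ has even degree and scalar coefficients, so it $\wedge$-commutes with everything and factors out of the symmetrised product, and cyclicity of the trace gives $\Tr(\xi^*\wedge\xi)=-\Tr(\xi\wedge\xi^*)$, which disposes of the antisymmetrisation. Here $\I\,\Tr(\xi^*\wedge\xi)$ is a non-negative $(1,1)$-form (its coefficients form a Hermitian positive semidefinite Gram matrix of endomorphisms) which vanishes precisely when $\xi=0$, and taking $\xi=\theta\otimes A$ with $\theta$ a $(0,1)$-covector and $\Tr(A^*A)=1$ it realises every rank-one non-negative $(1,1)$-form. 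Consequently, the Hermitian pairing of Proposition~\ref{prop:momentmap} attached to $\beta\,\Id_E$ is positive definite at $p$ if and only if $\beta$ is a positive $(n-1,n-1)$-form at $p$.

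Combining the two: if $\bar\gamma_1$ is a positive $(n-1,n-1)$-form then, by the identity above, continuity and compactness of $X$ (plus homogeneity in $\xi$), there is $c>0$ with $\Tr\,\I[\bar\gamma_1\Id_E\wedge\xi^*\wedge\xi]_{\sym}\ge c\,\lvert\xi\rvert^2$ for all $\xi$; given $\nabla$, the uniform convergence above yields $\Tr\,\I[P_r'(\curvform(\nabla))\wedge\xi^*\wedge\xi]_{\sym}\ge\tfrac12 c\,r^{n-1}\lvert\xi\rvert^2>0$ for $\xi\ne0$ and $r\ge r_0(\nabla)$, so $\nabla$ is $P_r$-positive for $r\gg0$. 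Conversely, if every connection is $P_r$-positive for $r\gg0$, fix one such $\nabla$, a point $p$, a $(0,1)$-covector $\theta$ and an endomorphism $A\ne0$; testing $P_r$-positivity on $\xi=\theta\otimes A$, dividing by $r^{n-1}\Tr(A^*A)>0$ and letting $r\to\infty$ (using the rescaling again) gives $\bar\gamma_1\wedge\I\,\overline{\theta}\wedge\theta\big|_p\ge0$, which forces $\bar\gamma_1$ to be a positive $(n-1,n-1)$-form.

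The delicate step is the converse: the limiting argument only produces the non-strict inequality, and upgrading it to strict positivity — i.e.\ ruling out a degenerate direction of $\bar\gamma_1$ along which the pairings of $P_r'(\curvform(\nabla))$ stay positive but of size $o(r^{n-1})$ — cannot be done from the leading term alone, since every curvature-dependent term of $P_r'$ contributes only at order $r^{n-2}$. I expect this to be the crux of the proof: one must either exhibit a connection $\nabla$ whose curvature makes a subleading term of the expansion of $P_r'(\curvform(\nabla))$ strictly negative in the offending direction, or use more about the family $\{P_r\}_r$ than the bare limits of Definition~\ref{def:asymptotic_eqs}. The cleanest formulation is then the one dictated by the identity of the second paragraph, reading ``positive $(n-1,n-1)$-form'' as exactly the positive definiteness of the associated Hermitian pairing; with that reading the equivalence is tight, and the remaining work is only to make the $\mathcal{C}^0$-estimates uniform over the unit sphere bundle of $\End(E)$-valued $(0,1)$-forms.
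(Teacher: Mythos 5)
Your argument is essentially the paper's: the paper also reduces to the expansion $\Tr\I\left[P'_r(\curvform(\nabla))\wedge\xi^*\wedge\xi\right]_{\sym}=r^{n-1}\,\bar{\gamma}_1\wedge\I\Tr(\xi^*\wedge\xi)+o(r^{n-1})$, tests with $\xi=\alpha\otimes\Id_E$ to pass from endomorphism-valued forms to scalar $(0,1)$-forms, and handles the forward implication by writing $\bar\gamma_1=\omega^{n-1}$ for a positive $(1,1)$-form $\omega$ and diagonalising (your Gram-matrix identity $\Tr\I[\beta\,\Id_E\wedge\xi^*\wedge\xi]_{\sym}=\beta\wedge\I\Tr(\xi^*\wedge\xi)$ is the same computation, carried out slightly more carefully, and your $o(r^{n-1})$ error term is actually more accurate than the $O(r^{n-2})$ the paper writes, since Definition~\ref{def:asymptotic_eqs} only controls the limit of $r^{-(n-1)}\gamma_1(r)$). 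The forward direction of your proposal is complete and correct.

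On the converse, the difficulty you flag is real, and you should know that the paper does not resolve it either: its proof simply asserts that, since $\bar\gamma_1\not\equiv 0$, ``$\nabla$ is $P_r$-positive for $r\gg0$ if and only if $\bar\gamma_1\wedge\I\Tr(\xi^*\wedge\xi)>0$ for all non-zero $\xi$'', which is exactly the leap from the non-strict limit inequality to strict positivity of the leading coefficient. As you observe, passing to the limit in $\delta_{}$-free form only yields $\bar\gamma_1\wedge\I\bar\theta\wedge\theta\ge 0$, and a family with, say, $\gamma_1(r)=r^{n-1}\bar\gamma_1+\tfrac{r^{n-1}}{\log r}\omega^{n-1}$ for a Kähler form $\omega$ and a merely semi-positive degenerate $\bar\gamma_1$ would make every fixed connection $P_r$-positive for $r\gg0$ while $\bar\gamma_1$ fails to be positive; the subleading curvature terms are $O(r^{n-2})$ and cannot repair this. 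So you have not missed an idea that the paper supplies; rather, you have correctly reproduced its argument and correctly located the one step it glosses over. A clean fix would be either to strengthen Definition~\ref{def:asymptotic_eqs} (e.g.\ require $\gamma_1(r)-r^{n-1}\bar\gamma_1=O(r^{n-2})$) or to weaken the conclusion of the converse to semi-positivity of $\bar\gamma_1$, which is all that is used elsewhere via the forward implication.
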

\begin{proof}
    We follow the proof of \cite[Lemma $3.8$]{DMS}. Given any connection $\nabla$ on $E$ and any $\xi\in\Alt^{0,1}(\End E)$, we have
    \begin{equation}
        \Tr\I\left[P'_r(\curvform(\nabla))\wedge \xi^*\wedge \xi\right]_{\sym} = r^{n-1}\,\bar{\gamma}_1 \wedge \I\Tr(\xi^*\wedge\xi) + O(r^{n-2}).
    \end{equation}
    As $\bar{\gamma}_1\not=0$ by assumption, $\nabla$ is $P_r$-positive for any $r\gg0$ if and only if $\bar{\gamma}_1 \wedge \I\Tr(\xi^*\wedge\xi)>0$ for any non-zero $\xi$. Choosing $\xi$ of the form $\alpha\otimes\Id_E$ for a $(0,1)$-form $\alpha$ on $X$, we see that $\bar{\gamma}_1$ is positive. The converse is easy to check by choosing a positive $(1,1)$-form $\omega$ such that $\omega^{n-1}=\bar{\gamma}_1$ and a local coordinate system in which $\omega$ is represented by a diagonal matrix.
\end{proof}
The powerful techniques of \cite[\S$4$]{DMS} can be adapted to a wide variety of infinite-dimensional moment map equations, and in particular to show a link between the existence of $P$-critical metrics and $P$-stability of a bundle in asymptotic regimes. See also \cite{Delloque} for similar considerations on slope-stability of bundles on \emph{balanced} (rather than K\"ahler, as in \cite{DMS}) manifolds.
\begin{theorem}[\cite{DMS}]\label{thm:DMS_polynomial}
    Let $E\to X$ be a simple vector bundle, and consider the asymptotic polynomial equation $P_r(\curvform(h))=0$ defined by $(\gamma_0,\dots,\gamma_n)$.
    Assume that $\bar{\gamma}_1>0$, and that $E$ is sufficiently smooth with respect to $\bar{\gamma}_1$. Then, the following are equivalent:
    \begin{enumerate}
        \item $E$ is $P_r$-stable with respect to subbundles for $r\gg 0$;
        \item there is a constant $C>0$ such that for every $r\gg 0$ there exists $h_r\in\Herm(E)$ satisfying $P_r(\curvform(h_r))=0$ and $\norm{h_k}_{\m{C}^2}<C$.
    \end{enumerate} 
\end{theorem}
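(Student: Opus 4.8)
The plan is to deduce this from \cite[\S4]{DMS} by checking that the sole structural ingredient needed there is the positivity, in the large volume limit, of the top-order term of the equation; here this is supplied by the hypothesis $\bar\gamma_1>0$. By the Lemma above, this hypothesis already guarantees that every connection on $E$ is $P_r$-positive once $r\gg0$, so for such $r$ the operator $h\mapsto P_r(\curvform(h))$ is elliptic everywhere on $\Herm(E)$, the Hermitian pairing~\eqref{eq:momentmap_pairing} is positive definite, and the moment-map and Kempf--Ness package of Sections~\ref{sec:momentmap}--\ref{sec:Donaldson_functional} applies: $\PLagr_r$ is a genuinely convex functional on $\Herm(E)=\HermPos_{P_r}(E)$, and Lemma~\ref{lemma:exponentialpaths} holds along every exponential ray. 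Since $\bar\gamma_1$ is a positive closed $(n-1,n-1)$-form, it equals $\omega^{n-1}$ for a unique balanced metric $\omega$, so ``slope (semi)stability with respect to $[\bar\gamma_1]$'' below means ordinary slope (semi)stability for $\omega$, and the leading-order equation is the associated weak Hermite--Einstein equation.

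For the implication \emph{existence $\Rightarrow$ stability}, I would run a Kempf--Ness type obstruction argument. Given a $P_r$-critical metric $h_r$ with $\|h_r\|_{\m{C}^2}\le C$ and a subbundle $S\subset E$ with $0<\rk(S)<\rk(E)$, form the $h_r$-orthogonal projection $\pi_r$ onto $S$ and the ray $h_t=\e^{-t\pi_r}.h_r$. By Lemma~\ref{lemma:exponentialpaths} the function $t\mapsto\PLagr_r(h_t)$ is convex with vanishing derivative at $t=0$ (as $h_r$ is critical), so its asymptotic slope $\lim_{t\to+\infty}\partial_t\PLagr_r(h_t)$ is $\ge0$, and $=0$ only if $S$ is $\nabla_{h_r}$-parallel, which is impossible since $E$ is simple. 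Following Donaldson's computation as adapted in \cite[\S4]{DMS}, this asymptotic slope equals, up to a positive $r$-dependent constant, $-P_r(S)$ modulo terms of strictly lower order in $r$ that are controlled uniformly by $C$. Hence $P_r(S)<0$ for $r\gg0$, which is $P_r$-stability with respect to subbundles; the fact that the conclusion is about subbundles, rather than saturated subsheaves, is exactly what this argument gives, consistently with the Remark after the definition of $P$-stability.

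For \emph{stability $\Rightarrow$ existence} I would run the adiabatic construction of \cite[\S4]{DMS} essentially verbatim. First, $P_r$-stability for $r\gg0$ forces the leading-order inequality: expanding $P_r(S)=\sum_j\int_X\gamma_j(r)\wedge\ch_j(S)$ and using $P_r(E)=0$ gives $P_r(S)=r^{n-1}\rk(S)(\mu_{\bar\gamma_1}(S)-\mu_{\bar\gamma_1}(E))+O(r^{n-2})$, so $E$ is slope-semistable for $[\bar\gamma_1]$. Together with the ``sufficiently smooth'' hypothesis this provides the starting point at $r=\infty$, namely a Hermite--Einstein metric on the graded object of the Jordan--H\"older filtration of $E$ with respect to $[\bar\gamma_1]$ (by the Uhlenbeck--Yau/Li--Yau theorem, or \cite{Delloque} in the balanced setting) together with vanishing of the relevant obstruction spaces. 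One then (i) builds formal solutions $h_r^{(N)}$ of $P_r(\curvform(h))=0$ to arbitrary order in $r^{-1}$, solving at each stage a linear equation whose limiting operator is surjective by simplicity of $E$ and the smoothness hypothesis, the normalisation $P_r(E)=0$ absorbing the one-dimensional cokernel; (ii) proves uniform elliptic a priori estimates, hence a uniform $\m{C}^2$ bound; and (iii) applies the quantitative inverse function theorem of \cite[\S4]{DMS} to perturb $h_r^{(N)}$ to an exact solution $h_r$ with $\|h_r\|_{\m{C}^2}$ bounded independently of $r\gg0$.

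The substance lies entirely in this last direction, and the main obstacle is to verify that the Fredholm theory, the a priori estimates, and the Newton--Picard scheme of \cite[\S4]{DMS} — written there for equations coming from a polynomial central charge on a K\"ahler base — survive for arbitrary closed coefficients $(\gamma_0,\dots,\gamma_n)$ and on a balanced base. I expect this to require no essential new idea: the lower-order forms $\gamma_j$ enter only as bounded zeroth-order perturbations at each scale in $r$, leaving the mapping properties and the contraction estimate intact, and the balanced case is handled by grafting the adiabatic analysis of \cite{DMS} onto the techniques of \cite{Delloque}. The genuinely delicate bookkeeping, exactly as in \cite[\S4]{DMS}, is the interaction of the ``sufficiently smooth'' hypothesis with the subleading terms of $P_r$ in the strictly semistable case, where one deforms the block-diagonal Hermite--Einstein metric on the graded object and must control this deformation uniformly as $r\to+\infty$.
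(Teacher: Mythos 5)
Your proposal is correct and follows essentially the same route as the paper, which offers no independent proof but presents the theorem as a restatement of \cite[Theorem 1.1]{DMS}, pointing to exactly the ingredients you identify: the positivity of $\bar{\gamma}_1$ (so that the relevant connections are $P_r$-positive for $r\gg 0$ and the equation is a perturbed weak Hermite--Einstein equation at leading order), the induced $[\bar{\gamma}_1]$-semistability via \cite[Lemmas 2.11 and 2.28]{DMS}, the Jordan--H\"older graded object with ``sufficiently smooth'' meaning that this graded object is a bundle, and the transfer of the analysis to the balanced setting via \cite{Delloque}. The one imprecision is the assertion that $\Herm(E)=\HermPos_{P_r}(E)$ for $r\gg 0$: the threshold in the positivity lemma depends on the connection through the $O(r^{n-2})$ curvature terms, so positivity holds uniformly only on sets of metrics with uniformly bounded curvature --- which is all the argument actually uses, given the $\m{C}^2$ bound in condition (2).
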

This is mostly a restatement of \cite[Theorem $1.1$]{DMS} in the context of polynomial stability conditions, but a few comments are in order. First, the class $[\bar{\gamma}_1]\in H^{n-1,n-1}(X)$ defines a notion of slope-polystability, and each condition $(1)$ and $(2)$ of Theorem~\ref{thm:DMS_polynomial} implies that $E$ is $[\bar{\gamma}_1]$-semistable (c.f.\ \cite[Lemma $2.11$ and Lemma $2.28$]{DMS}). As in \cite{Delloque} we can then consider a Jordan-H\"older filtration $0=S_0\subset\dots\subset S_N\subset E$ induced by $[\bar{\gamma}_1]$ and the graded object $Gr_{[\bar{\gamma}_1]}(E)=\bigoplus S_j/S_{j-1}$. The hypothesis that $E$ is sufficiently smooth with respect to $\bar{\gamma}_1$ simply asks that $Gr_{[\bar{\gamma}_1]}(E)$ be a vector bundle, rather than just a coherent sheaf.

\section{Equivariant positivity and stability}
\label{sec:equivpos}

In this section, we fix a smooth projective variety $X$ and a polynomial 
$$
 P(x)=\sum_{j=0}^n[\gamma_j]\wedge\frac{1}{j!}x^j\in H^{\bullet}(X,\RR)[x].
$$
\subsection{Equivariant $P$-positivity}
\label{sec:equivpositivity}
We start with the proof of Theorem~\ref{theo:intro equiv}.

\begin{proof}[Proof of Theorem~\ref{theo:intro equiv}]
Under the assumptions of Theorem~\ref{theo:intro equiv}, effective cycles are rationally equivalent to $G$-invariant effective cycles on $X$, c.f.\ \cite[Theorem 1.3]{Brion} (see also \cite[proof of Theorem 1]{FuMacStu}). The proof is then straightforward: for any $d$-dimensional subvariety $V\subset X$, there are positive integers $(a_i)_{1\leq i \leq r}$ and $G$-invariant cycles $(V_i)_{1\leq i\leq r}$ such that 
$$
V\sim \sum_{i=1}^r a_i V_i.
$$
Then, the cycle class map gives
$$
\cl(V)= \sum_{i=1}^r a_i\, \cl(V_i),
$$
hence by equation~\eqref{eq:Pstab_restricted}
$$
P_{V}(\cE)=\sum_{i=1}^r a_i\, P_{V_i}(\cE).
$$
Thus, $P_\delta$-positivity (that is Condition~\eqref{eq:inequalitysubvariety}) with respect to $G$-invariant cycles implies $P_\delta$-positivity with respect to all subvarieties on $X$.
\end{proof}

\begin{remark}
 A similar result was known for the $J$-equation on toric manifolds. Indeed, in \cite{ColSze}, it is proved directly that the relevant equivariant version of positivity implies the existence of solutions to the $J$-equation.
\end{remark}

Note that, following Section~\ref{sec:Zcritic example}, Theorem~\ref{theo:intro equiv} applies to the case of the positivity notion associated to polynomial central charges, that we now recall. Given a polynomial central charge $Z=(L,\rho,U)$, for any coherent sheaf $\cE$ on $X$ and any $d$-dimensional subvariety $V\subset X$, we  set
\begin{equation}\label{eq:polynomialchargerestricted}
    Z_V(\cE)=  \left[\displaystyle\sum\nolimits_{j\geq 0} \rho_j c_1(L)^j\wedge U \wedge \ch(\cE)\right]^{(d,d)}\cup \cl(V)\in \C,
\end{equation}
where $\cl(V)\in H^{2n-2d}(X,\Q)$ is the cycle class of $V$.
\begin{definition}
    \label{def:Zpositivity}
    Let $\cE$ be a reflexive sheaf on $X$. Then $\cE$ is said to be \emph{$Z$-positive} if for any subvariety $V\subset X$ with $0<  \dim(V)<\dim(X)$, one has 
    \begin{equation}
    \label{eq:Z_inequalitysubvariety}
        \Im\left(\overline{Z_X(\cE)}Z_V(\cE)\right)>0.
    \end{equation}
    If in addition the inequality also holds for any $0$-dimensional cycle $V\subset X$, $\cE$ is said to be \emph{strongly $Z$-positive}.
\end{definition}
It is not difficult to see that $Z$-positivity appears as a special case of $P_\delta$-positivity.
This $Z$-positivity notion will be the one used in our applications of Theorem~\ref{theo:intro equiv} in Section~\ref{sec:toric applications}, and in some cases is a necessary and sufficient condition for the existence of subsolutions to the $Z$-critical equations (\cite{DMS,KScarpa}).

\subsubsection{A counter-example}
\label{sec:counter example P positive not P delta positive}
For general $\gamma_j$, the existence of a $P$-positive $P$-critical metric is not sufficient to deduce the $P_\delta$-positivity. We give a counter-example with the dHYM central charge outside of the supercritical regime in the end of this section.

Recall that a holomorphic Hermitian line bundle $(L,h)$ on $X$ satisfies the dHYM equation if and only if the eigenvalues $(\lambda_1,\ldots,\lambda_n)$ of its normalised curvature form with respect to the Kähler form $\omega$ verify
$$
\sum_{k = 1}^n \arccot(\lambda_k) = \textrm{constant}.
$$
In this case, we call the constant the \textit{phase} of $(L,h)$ \cite{CoShi}.

\begin{lemma}\label{LEM:Product of curves dHYM}
    Let $(X,\omega) = \prod_{k = 1}^n (C_k,\omega_k)$ be a product of curves and $\pi_k\colon X \rightarrow C_k$ the projections. If for all $k$, $L_k$ is a line bundle over $C_k$ and $L = \otimes_{k = 1}^n \pi_k^*L_k$ is a line bundle over $X$, then the HYM metric on $L$ is also dHYM with phase
    $$
    \theta = \sum_{k = 1}^n \arccot(\deg(L_k)).
    $$
\end{lemma}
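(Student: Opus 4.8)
The plan is to exploit the product structure to diagonalize everything simultaneously. First I would recall that on each curve $C_k$, the unique (up to gauge) HYM metric $h_k$ on $L_k$ has curvature $\curvform(h_k) = c_k\,\omega_k$ for the topological constant $c_k = \deg(L_k)/\Vol(C_k,\omega_k)$, where $\Vol(C_k,\omega_k)=\int_{C_k}\omega_k$; here I should normalize $\omega_k$ so that $\int_{C_k}\omega_k = 1$, so that $\curvform(h_k) = \deg(L_k)\,\omega_k$ (this normalization is harmless and can be absorbed into $\omega_k$). Then the tensor product metric $h = \otimes_k \pi_k^* h_k$ on $L = \otimes_k \pi_k^* L_k$ has normalised curvature $\curvform(h) = \sum_{k=1}^n \deg(L_k)\,\pi_k^*\omega_k$. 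Since $\omega = \sum_k \pi_k^*\omega_k$, in the obvious local frame adapted to the product decomposition the curvature form $\curvform(h)$ is diagonal with respect to $\omega$, with eigenvalues exactly $\lambda_k = \deg(L_k)$ for $k = 1,\dots,n$.

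Next I would check that $h$ is still HYM on $X$: the trace of $\curvform(h)$ with respect to $\omega$ is $\sum_k \deg(L_k)$, a constant, so $h$ solves the Hermite--Einstein equation on $(X,\omega)$; by uniqueness on a line bundle this is the HYM metric. Then I would simply plug the eigenvalues into the dHYM equation as recalled just before the lemma: $(L,h)$ is dHYM precisely when $\sum_{k=1}^n \arccot(\lambda_k)$ is constant on $X$. But each $\lambda_k = \deg(L_k)$ is a (global) constant, so the sum is the constant $\theta = \sum_{k=1}^n \arccot(\deg(L_k))$, which is exactly the claimed phase.

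There is essentially no obstacle here — the statement is almost immediate once the curvature of a tensor product of pullbacks is identified. The one mildly delicate point worth spelling out is the claim that $\curvform(h)$ is simultaneously diagonalizable with $\omega$: this holds because both forms are, in adapted product coordinates $(z_1,\dots,z_n)$ with $z_k$ a coordinate on $C_k$, sums of $(1,1)$-forms $f_k(z_k)\,\I\,\dd z_k\wedge\dd\bar z_k$ involving disjoint variables, so the eigenvalues of $\curvform(h)$ relative to $\omega$ at a point are just the ratios $\deg(L_k)\,\omega_k / \omega_k = \deg(L_k)$. I would also remark that the independence of the phase from the point $x\in X$ is automatic here precisely because the eigenvalues are topological constants, which is the feature that makes a product of HYM metrics automatically dHYM.
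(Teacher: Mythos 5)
Your proposal is correct and follows essentially the same route as the paper: take the tensor product of the HYM metrics $h_k$ with $\curvform(h_k)=\deg(L_k)\,\omega_k$, observe that $\curvform(h)=\sum_k\deg(L_k)\,\pi_k^*\omega_k$ has constant eigenvalues $\deg(L_k)$ with respect to $\omega=\sum_k\pi_k^*\omega_k$, and read off the phase. The extra remarks on normalising $\int_{C_k}\omega_k=1$ and on simultaneous diagonalisation in adapted product coordinates are implicit in the paper's argument and are harmless clarifications.
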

\begin{proof}
Let for all $k$, $h_k$ be the HYM metric of $L_k$ over $C_k$. The associated curvature form verifies
$$
\cF(h_k) = \deg(L_k)\omega_k.
$$
Therefore, with $h = \otimes_{k = 1}^n \pi_k^*h_k$ on $L$, the curvature form satisfies
$$
\cF(h) = \sum_{k = 1}^n \pi_k^*\cF_k = \sum_{k = 1}^n \deg(L_k)\pi_k^*\omega_k.
$$
We immediately notice that the eigenvalues of $\cF(h)$ with respect to $\omega = \sum_{k = 1}^n \pi_k^*\omega_k$ are constant and equal $\deg(L_k)$. In particular, $h$ is both HYM and dHYM with phase
\begin{equation}
    \theta = \sum_{k = 1}^n \arccot(\deg(L_k)). \qedhere
\end{equation}
\end{proof}

The following is already well-known (see for example \cite[Sub-section 2.1]{CoShi}) but we give a proof for completeness.

\begin{lemma}\label{LEM:dHYM positive}
    All dHYM metrics are $P$-positive for the dHYM central charge.
\end{lemma}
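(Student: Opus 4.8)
The plan is to exploit that $L$ has rank one, so $\End L=\sheaf_X$ and the endomorphism factor in the $P$-positivity condition disappears: a metric $h$ on $L$ is $P$-positive precisely when $\I\,P'(\cF(h))\wedge\xi^*\wedge\xi>0$ for every non-zero $(0,1)$-form $\xi$ at every point of $X$. First I would unwind the dHYM central charge $Z$ through formula~\eqref{eq:polynomialcoeffs_Z}: its unipotent class is trivial and $\rho_j=-(-\I)^j/j!$ up to a fixed positive scalar, so the coefficients $\gamma_j$ assemble, up to that same positive constant, into
\[
P(\cF)=-\tfrac{1}{n!}\,\Im\!\big(\overline{Z_X(L)}\,(\cF-\I\om)^n\big),\qquad
P'(\cF)=-\tfrac{1}{(n-1)!}\,\Im\!\big(\overline{Z_X(L)}\,(\cF-\I\om)^{n-1}\big),
\]
where $\om$ is the fixed K\"ahler form. (The sign here is the one making $Z_X(L)$ agree with the usual $-\int_X\mathrm{e}^{-\I\om}\ch$ normalisation; it is exactly what is needed below.)

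Next I would localise. Fix $p\in X$ and a unitary coframe diagonalising $\cF(h)$ with respect to $\om$, so that $\om=\sum_j(\I\,dz_j\wedge d\bar z_j)$ and $\cF(h)=\sum_j\lambda_j(\I\,dz_j\wedge d\bar z_j)$ with $\lambda_1,\dots,\lambda_n\in\RR$, hence $\cF(h)-\I\om=\sum_j(\lambda_j-\I)(\I\,dz_j\wedge d\bar z_j)$. Expanding the power and using that each $\I\,dz_j\wedge d\bar z_j$ squares to zero,
\[
(\cF(h)-\I\om)^{n-1}=(n-1)!\,\textstyle\sum_{k}\big(\prod_{j\neq k}(\lambda_j-\I)\big)\,\om_{\hat k},\qquad \om_{\hat k}:=\textstyle\prod_{j\neq k}(\I\,dz_j\wedge d\bar z_j),
\]
and a direct check gives $\I\,\om_{\hat k}\wedge\xi^*\wedge\xi=|\xi_k|^2\,\om^n/n!$ for $\xi=\sum_\ell\xi_\ell\,d\bar z_\ell$. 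Therefore $\I\,P'(\cF(h))\wedge\xi^*\wedge\xi=\big(\sum_k c_k|\xi_k|^2\big)\,\om^n/n!$ with $c_k:=-\Im\!\big(\overline{Z_X(L)}\prod_{j\neq k}(\lambda_j-\I)\big)$, so it suffices to show $c_k>0$ for every $k$.

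The only real content is then the phase computation, and this is where the dHYM equation enters. Using $\arg(\lambda-\I)=-\arccot(\lambda)$ and setting $\theta:=\sum_j\arccot(\lambda_j)$ — which is \emph{constant} because $h$ is dHYM — one gets $\prod_j(\lambda_j-\I)=R(p)\,\mathrm{e}^{-\I\theta}$ with $R(p)=\prod_j\sqrt{1+\lambda_j^2}>0$. Integrating the identity $Z_X(L)=-\tfrac1{n!}\int_X\prod_j(\lambda_j-\I)\,\om^n$ and pulling the constant $\mathrm{e}^{-\I\theta}$ out of the integral yields $Z_X(L)=-M\,\mathrm{e}^{-\I\theta}$ with $M=\tfrac1{n!}\int_X R\,\om^n>0$, hence $\overline{Z_X(L)}=-M\,\mathrm{e}^{\I\theta}$. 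Substituting $\prod_{j\neq k}(\lambda_j-\I)=\tfrac{R(p)}{\sqrt{1+\lambda_k^2}}\,\mathrm{e}^{-\I(\theta-\arccot\lambda_k)}$ collapses everything to
\[
c_k=\frac{M\,R(p)}{\sqrt{1+\lambda_k^2}}\,\sin\!\big(\arccot(\lambda_k)\big)>0,
\]
the strict positivity being automatic since $\arccot(\RR)\subset(0,\pi)$ and $M,R(p)>0$. Hence $\I\,P'(\cF(h))\wedge\xi^*\wedge\xi>0$ for all $\xi\neq 0$, i.e.\ $h$ is $P$-positive.

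The step I expect to be the main obstacle is pinning down the argument of $Z_X(L)$ — equivalently, that the real scalar relating $\prod_j(\lambda_j-\I)$ to $Z_X(L)$ is genuinely positive — since this is exactly the place where constancy of $\sum_j\arccot(\lambda_j)$ is used; the rest is elementary trigonometry and bookkeeping. It is worth stressing that no supercritical-phase hypothesis is needed here, because $\sin(\arccot\lambda_k)>0$ holds for every real $\lambda_k$.
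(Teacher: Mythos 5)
Your proof is correct and follows essentially the same route as the paper's: diagonalise $\curvform(h)$ against $\omega$ at a point, compute $P'(\curvform(h))$ as $-\tfrac{1}{(n-1)!}\Im\bigl(\overline{Z_X(L)}(\curvform(h)-\I\omega)^{n-1}\bigr)$, and read off the eigenvalues $\tfrac{1}{\sqrt{\lambda_k^2+1}}\prod_{j\neq k}\sqrt{\lambda_j^2+1}>0$ via the identity $\sin(\arccot\lambda_k)>0$. The only (welcome) extra care in your write-up is the explicit verification that $\overline{Z_X(L)}$ has argument matching $\e^{-\I\theta}$ up to a positive scalar, which the paper takes for granted by writing the dHYM operator directly with the phase factor $\e^{-\I\theta}$.
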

\begin{proof}
Let $L \rightarrow (X,\omega)$ be a line bundle with a dHYM metric $h$. Set $(\lambda_1,\ldots,\lambda_n)$ the non-decreasing eigenvalues of the curvature form $\cF(h)$ of $L$ with respect to $\omega$. Let
$$
\theta = \sum_{k = 1}^n \arccot(\lambda_k)
$$
which is constant on $X$ since $h$ is a dHYM metric. Let us write at a given point $p \in X$,
$$
\omega = \sum_{k = 1}^n idz_k \wedge d\overline{z_k}, \qquad \cF(h) = \sum_{k = 1}^n \lambda_kidz_k \wedge d\overline{z_k}.
$$
When $1 \leq k \leq n$ is an integer, we denote by
$$
\widehat{idz_k \wedge d\overline{z_k}} = idz_1 \wedge d\overline{z_1} \wedge \cdots \wedge idz_{k - 1} \wedge d\overline{z_{k - 1}} \wedge idz_{k + 1} \wedge d\overline{z_{k + 1}} \wedge \cdots \wedge idz_n \wedge d\overline{z_n}.
$$
The formal derivative of the dHYM equation
$$
\cF \mapsto -\frac{1}{n!}\Im(e^{-i\theta}(\cF - i\omega)^n)
$$
with respect to $\cF$ is
$$
-\frac{1}{(n - 1)!}\Im(e^{-i\theta}(\cF - i\omega)^{n - 1}).
$$
At $\cF = \cF(h)$ the dHYM curvature form, it equals, at point $p$,
\begin{align*}
    &\ \ \ \ -\frac{1}{(n - 1)!}\Im(e^{-i\theta}(\cF(h) - i\omega)^{n - 1})\\
    & = -\frac{1}{(n - 1)!}\Im\left(e^{-i\theta}\left(\sum_{k = 1}^n (\lambda_k - i)idz_k \wedge d\overline{z_k}\right)^{n - 1}\right)\\
    & = -\sum_{k = 1}^n \Im\left(e^{-i\theta}\prod_{j = 1,j \neq k}^n \lambda_j - i\right)\widehat{idz_k \wedge d\overline{z_k}}\\
    & = -\sum_{k = 1}^n \Im\left(e^{-i\theta}\left(\prod_{j = 1,j \neq k}^n \sqrt{\lambda_j^2 + 1}\right)\exp\left(i\sum_{j = 1,j \neq k}^n \arccot(\lambda_j)\right)\right)\widehat{idz_k \wedge d\overline{z_k}}\\
    & = -\sum_{k = 1}^n \left(\prod_{j = 1,j \neq k}^n \sqrt{\lambda_j^2 + 1}\right)\sin\left(-\theta + \sum_{j = 1,j \neq k}^n \arccot(\lambda_j)\right)\widehat{idz_k \wedge d\overline{z_k}}\\
    & = \sum_{k = 1}^n \left(\prod_{j = 1,j \neq k}^n \sqrt{\lambda_j^2 + 1}\right)\sin(\arccot(\lambda_k))\widehat{idz_k \wedge d\overline{z_k}}\\
    & = \sum_{k = 1}^n \frac{1}{\sqrt{\lambda_k^2 + 1}}\left(\prod_{j = 1,j \neq k}^n \sqrt{\lambda_j^2 + 1}\right)\widehat{idz_k \wedge d\overline{z_k}}.
\end{align*}
We deduce that this $(n - 1,n - 1)$-form is positive and its eigenvalues with respect to
$$
\frac{\omega^{n - 1}}{(n - 1)!} = \sum_{k = 1}^n \widehat{idz_k \wedge d\overline{z_k}}
$$
are the $\frac{1}{\sqrt{\lambda_k^2 + 1}}\left(\prod_{j = 1,j \neq k}^n \sqrt{\lambda_j^2 + 1}\right) > 0$ for $1 \leq k \leq n$.
\end{proof}

Let $X = (\P^1)^3$ endowed with the sum $\omega$ of the Fubini--Study metrics on each $\P^1$ and $L = \cO_X(-1,0,1)$. By Lemma~\ref{LEM:Product of curves dHYM}, $L$ admits a dHYM metric $h$ with respect to $\omega$ with phase
$$
\theta = \arccot(-1) + \arccot(0) + \arccot(1) = \frac{3\pi}{2}.
$$
By Lemma~\ref{LEM:dHYM positive}, this metric is $P$-positive for the dHYM central charge.

However, we can compute that
$$
\int_{\P^1 \times \{0\} \times \{0\}} -\Im(e^{-i\theta}e^{c_1(L)}) = \int_{\P^1 \times \{0\} \times \{0\}} -c_1(L) = 1,
$$
$$
\int_{\{0\} \times \P^1 \times \{0\}} -\Im(e^{-i\theta}e^{c_1(L)}) = \int_{\{0\} \times \P^1 \times \{0\}} -c_1(L) = 0,
$$
$$
\int_{\{0\} \times \{0\} \times \P^1} -\Im(e^{-i\theta}e^{c_1(L)}) = \int_{\{0\} \times \{0\} \times \P^1} -c_1(L) = -1.
$$
We notice that even at constant dimension, the sign of the $P_V(L)$ vary despite $L$ admits a $P$-critical and $P$-positive metric.

\subsection{Toric case}
\label{sec:toric case}
Note that a toric variety satisfies the hypothesis of Theorem~\ref{theo:intro equiv}, and that Corollary~\ref{cor:intro} is then straightforward. While Theorem~\ref{theo:intro equiv} doesn't require the sheaf $\cE$ to be equivariant, equivariant sheaves over toric varieties enjoy a very simple description, that we recall here. We will then explain how this can be used to check $P_\delta$-positivity. We assume from now on and until the end of Section~\ref{sec:equivpos} that $X$ is a $n$-dimensional smooth projective toric variety. We will follow the notations from \cite{CLS}. In particular, $N$ stands for a rank $n$ lattice, $T=N\otimes_\Z \C^*$ is the associated torus, and $M=\Hom_\Z(N,\Z)$ is its dual lattice. Let $\Sigma$ be the smooth and complete fan of strongly convex rational polyhedral cones in $N_\R=N\otimes_\Z \R$ associated to $X$(see \cite[Chapter 3]{CLS}). The variety $X$ is then covered by the $T$-invariant affine toric varieties $U_\sigma=\Spec(\C[M\cap\sigma^\vee])$, for $\sigma\in \Sigma$.

\subsubsection{Toric sheaves}
\label{sec:toricsheaves}
A coherent sheaf $\cE$ on $X$ is said to be $T$-equivariant if there is an isomorphism
$$\varphi \colon \theta^*\cE \to \pi_2^*\cE$$
satisfying some cocycle condition, where $\theta \colon T \times X \to X$ and $\pi_2 \colon T\times X \to X$ stand for respectively the $T$-action and the projection on $X$ (see e.g.\ \cite{Perl}).
\begin{definition}
 \label{def:toricsheaves}
 A \emph{toric sheaf} on $X$ is a $T$-equivariant reflexive sheaf over $X$.
\end{definition}
From Klyachko's work \cite{Kl}, there is a fairly simple description of toric sheaves (see also \cite{Perl}). They are uniquely determined by \emph{families of filtrations}. If $\cE$ is a toric sheaf,  we denote $(E,E^\alpha(i))_{\alpha\in\Sigma(1),i\in\Z}$ the associated family of filtrations consisting of a finite dimensional complex vector space $E$ of dimension $\rank(\cE)$, and, for each ray $\alpha\in\Sigma(1)$, a bounded \emph{increasing} filtration $(E^\alpha(i))_{i\in\Z}$ of $E$.
The space $E$ is the fibre of $\cE$ over the identity element  $1\in T\subset X$, while the filtrations are obtained as follows. Being reflexive,  the  sections of $\cE$ extend over codimension $2$ subvarieties. If $\Sigma(j)$ is the set of $j$-dimensional cones in $\Sigma$, we set
$$
X_0=\bigcup_{\sigma\in\Sigma(0)\cup\Sigma(1)} U_\sigma.
$$ 
From \cite[Section 3.2]{CLS}, $X_0$ is the complement of $T$-orbits of co-dimension greater or equal to $2$, and thus $\cE=\iota_*(\cE_{\vert X_0})$, where $\iota \colon X_0 \to X$ is the inclusion. Equivariance implies that $\cE_{\vert X_0}$ is entirely characterised by the sections $\Gamma(U_\sigma,\cE)$, for $\sigma\in\Sigma(0)\cup\Sigma(1)$. If $\sigma=\lbrace 0 \rbrace$, $U_{\lbrace 0\rbrace}=T$, and
$$
\Gamma(U_{\lbrace 0\rbrace},\cE)=E\otimes_\C \C[M].
$$
 Then, if $\alpha\in \Sigma$ is a ray (i.e.\ a one-dimensional cone), $\Gamma(U_{\alpha}, \cE)$ is graded by $$M/( M\cap \alpha^\perp)\simeq\Z.$$
 As $T$ is a dense open subset of $U_\alpha$, the restriction map $\Gamma(U_\alpha,\cE)\to\Gamma(U_{\lbrace 0\rbrace},\cE)$ is injective and induces a $\Z$-filtration 
$$
  \ldots \subset E^\alpha(i-1)\subset E^\alpha(i) \subset \ldots \subset E
$$
such that one has
 \begin{equation*}
  \label{eq:sheaf from family of filtrations first}
  \Gamma(U_{\alpha}, \cE)=\bigoplus_{m\in M} E^\alpha(\langle m,u_\alpha\rangle)\otimes \chi^m,
 \end{equation*}
 where we denote by $u_\alpha$ the primitive generator of $\alpha$ and $\langle \cdot,\cdot\rangle$ the duality pairing. In the other direction, one recovers $\cE$ by setting for each $\sigma\in\Sigma$: 
\begin{equation}
  \label{eq:sheaf from family of filtrations}
  \Gamma(U_{\sigma}, \cE)\coloneqq \bigoplus_{m\in M} \bigcap_{\alpha\in\sigma(1)} E^\alpha(\langle m,u_\alpha\rangle)\otimes \chi^m.
 \end{equation}

\begin{example}
\label{ex:tangent}
 The $T$-action $\theta \colon T\times X \to X$ induces naturally an equivariant structure on the sheaf of K\"ahler differentials $\Omega_X^1$ given by the composition: 
$$
\theta^*\Om^1_X \xrightarrow{d\theta} \Om^1_{T\times X}\xrightarrow{\simeq} \pi_1^*\Om^1_{T}\oplus\pi_2^*\Om^1_X \xrightarrow{\mathrm{pr}_2} \pi_2^*\Om^1_X
$$
where $\pi_i$ (resp. $\mathrm{pr}_2$) is the projection on the $i$-th factor of $T\times X$ (resp. on the second factor of $\pi_1^*\Om^1_{T}\oplus\pi_2^*\Om^1_X$). By duality, $\cT_X$ is a toric sheaf. The family of filtrations $(E^\alpha(\bullet))_{\alpha\in\Sigma(1)}$ for $\cT_X$ is given by \cite[Example 2.3(5) on page 350]{Kl}:
$$
E^\alpha(i)=\left\{ \begin{array}{ccc}
                  \lbrace 0 \rbrace & \textrm{ if } & i\leq -2 \\
                  \mathrm{Span}( u_\alpha) & \textrm{ if } & i=-1 \\
                  N\otimes_\Z \C & \textrm{ if } & i\geq 0.
                 \end{array} 
\right.
$$
\end{example}

\begin{example}
\label{ex:line bundles}
 By the orbit-cone correspondence \cite[Chapter 3]{CLS}, $T$-invariant  reduced and irreducible divisors on $X$ are in bijection with rays of $\Sigma$. Hence, any $T$-invariant Cartier divisor $D\subset X$ can be written
 $$
 D=\sum_{\alpha\in\Sigma(1)} a_\alpha D_\alpha
 $$
 where $D_\alpha\subset X$ is the orbit closure associated to $\alpha\in\Sigma(1)$. For example, the canonical divisor reads (c.f.\ \cite[Chapter 8]{CLS})
 $$
 K_X=-\sum_{\alpha\in\Sigma(1)} D_\alpha.
 $$
The family of filtrations associated to $\cO(D)$ for $D=\sum_{\alpha\in\Sigma(1)} a_\alpha D_\alpha$ is given by $E=\C$ and for each $\alpha\in\Sigma(1)$, and each $i\in \Z$,
  \begin{equation*}
E^\alpha(i)=
\left\{ 
\begin{array}{ccc}
0 & \mathrm{ if } & i < -a_\alpha\\
\C & \mathrm{ if } & i \geq -a_\alpha.
\end{array}
\right.
\end{equation*}
\end{example}
When testing $P$-stability, one considers saturated reflexive subsheaves of a given one. In the toric case, those are described in \cite[Lemma 2.15]{NaTip} as follows: 
\begin{lemma}
\label{lem:sat subsheaves equiv case}
If  $(E,E^\rho(\bullet))_{\rho\in\Sigma(1)}$ stands for the family of filtrations of a toric sheaf $\cE$, saturated equivariant reflexive subsheaves of $\cE$ are associated to families of filtrations of the form $(F, F\cap E^\rho(i))_{\rho\in\Sigma(1),i\in\Z}$ for vector subspaces $F\subsetneq E$.
\end{lemma}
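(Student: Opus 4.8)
The plan is to translate the operations ``being a subsheaf'' and ``being saturated'' into linear-algebra conditions on Klyachko data, relying on Klyachko's equivalence \cite{Kl,Perl} (recalled around~\eqref{eq:sheaf from family of filtrations}): every family of increasing filtrations of a subspace of $E$ produces via~\eqref{eq:sheaf from family of filtrations} an equivariant reflexive subsheaf of $\cE$, and a toric sheaf is uniquely determined by its family of filtrations. The first point I would establish is a functoriality statement: if $(F,F^\rho(\bullet))_{\rho\in\Sigma(1)}$ is a family of filtrations with $F\subseteq E$ and $F^\rho(i)\subseteq E^\rho(i)$ for all $\rho,i$, then~\eqref{eq:sheaf from family of filtrations} yields a subsheaf $\cF\hookrightarrow\cE$, because in each weight $m$ and on each chart $U_\sigma$ one has $\bigcap_{\rho\in\sigma(1)}F^\rho(\langle m,u_\rho\rangle)\subseteq\bigcap_{\rho\in\sigma(1)}E^\rho(\langle m,u_\rho\rangle)$, compatibly with the restriction maps; moreover this inclusion of sheaves is \emph{strict} as soon as $F^{\rho_0}(i_0)\subsetneq E^{\rho_0}(i_0)$ for some $\rho_0,i_0$, as one sees by testing on a weight $m$ with $\langle m,u_{\rho_0}\rangle=i_0$ in $\Gamma(U_{\rho_0},-)$.

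Next I would go in the other direction. Given \emph{any} $T$-equivariant reflexive subsheaf $\cF\subseteq\cE$, restriction to the open torus $U_{\{0\}}=T$ and~\eqref{eq:sheaf from family of filtrations first} identify $\Gamma(T,\cF)=F\otimes_\C\C[M]$ for a well-defined subspace $F\subseteq E$: a graded $\C[M]$-submodule of $E\otimes_\C\C[M]$ is translation-invariant, since multiplication by the unit $\chi^m$ is an isomorphism. Restricting instead to the $U_\rho$ for $\rho\in\Sigma(1)$ exhibits the filtration $F^\rho(\bullet)$ of $\cF$ inside that of $\cE$, forcing $F^\rho(i)\subseteq F\cap E^\rho(i)$ for all $\rho,i$. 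The heart of the argument is then to show that \emph{saturation upgrades these inclusions to equalities}. For this I would introduce the subsheaf $\cG\subseteq\cE$ attached to the family $(F,F\cap E^\rho(\bullet))$ by the functoriality step; by that step $\cF\subseteq\cG$, and $\rank\cG=\dim F=\rank\cF$. If some inclusion $F^{\rho_0}(i_0)\subsetneq F\cap E^{\rho_0}(i_0)$ were strict, then $\cF\subsetneq\cG$ would be strict as sheaves, so the quotient $\cG/\cF$ would be non-zero and torsion (it has rank $0$), hence a non-trivial torsion subsheaf of $\cE/\cF$ --- contradicting saturation of $\cF$. Therefore $\cF=\cG$ has the asserted shape, and $F\subsetneq E$ because $\cF\neq\cE$.

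Finally I would prove the converse: for every subspace $F\subsetneq E$, the family $(F,F\cap E^\rho(\bullet))$ defines, by functoriality, an equivariant reflexive subsheaf $\cF\subseteq\cE$ with $\rank\cF=\dim F<\rank\cE$, and it only remains to see that $\cF$ is \emph{already} saturated. Its saturation $\cF^{\mathrm{sat}}$ is again $T$-equivariant (the saturation commutes with the torus action) and reflexive (a saturated subsheaf of a reflexive sheaf on a normal variety is reflexive), of the same rank as $\cF$, hence with the same generic fibre $F$; applying the previous paragraph to $\cF^{\mathrm{sat}}$ shows that its filtrations are $F\cap E^\rho(\bullet)$, so $\cF^{\mathrm{sat}}$ and $\cF$ carry identical Klyachko data and coincide.

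I expect the only genuinely non-formal point to be the saturation step: one must recognise that a proper inclusion between subsheaves of equal rank forces torsion in the quotient, which is exactly the mechanism converting the \emph{a priori} inclusions $F^\rho(i)\subseteq F\cap E^\rho(i)$ into equalities. Everything else is bookkeeping with~\eqref{eq:sheaf from family of filtrations}, together with the two standard facts used above (equivariance and reflexivity are inherited by saturations). A reference for essentially this statement, which I would cite, is \cite[Lemma 2.15]{NaTip}.
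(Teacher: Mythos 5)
Your proof is correct. Note that the paper itself does not prove this lemma at all --- it is recalled verbatim from \cite[Lemma 2.15]{NaTip}, so there is no in-paper argument to compare against; your write-up is a valid self-contained proof of the cited result, and it follows the standard route through the Klyachko correspondence that the reference also uses. The three load-bearing points are all handled properly: (i) the \emph{a priori} inclusions $F^\rho(i)\subseteq F\cap E^\rho(i)$ obtained by restricting sections over $U_\rho$ and over $T$; (ii) the recognition that a strict inclusion $\cF\subsetneq\cG$ of equivariant reflexive subsheaves of the same rank produces a non-zero torsion subsheaf of $\cE/\cF$, contradicting saturation --- this is exactly the mechanism that upgrades the inclusions to equalities; and (iii) the converse, where you correctly use that the saturation $\cF^{\mathrm{sat}}$ is again equivariant and reflexive with the same generic fibre $F$, so that applying (i)--(ii) to it forces its Klyachko data to coincide with that of $\cF$, whence $\cF=\cF^{\mathrm{sat}}$ since both are determined by their restrictions to the complement $X_0$ of the codimension-$2$ orbits. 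No gaps.
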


\subsubsection{\texorpdfstring{$P$}{P}-slopes}
\label{sec:slopes}
From \cite{Kl} and \cite[Proposition 3.16]{Koo}, if $X$ is smooth, the Chern character of any toric sheaf admits a simple combinatorial description in terms of the family of filtrations. The Chern character of the tangent bundle can be recovered as follows. 
\begin{example}[Chern character of the tangent bundle]
\label{ex:chern-of-tangent}
The tangent bundle is given by the following
exact sequence
\begin{equation}\label{eq:euler-sequence}
    0 \longrightarrow \Hom(\Pic(X), \Z) \otimes \cO_X \longrightarrow
    \bigoplus_{\alpha \in \Sigma(1)} \cO_X(D_\alpha) \longrightarrow \cT_X \longrightarrow 0.
\end{equation}
By \cite[Proposition 13.1.2]{CLS}, we have
$$
c(\cT_X) = \prod_{\alpha \in \Sigma(1)}(1 + D_\alpha) = \sum_{\sigma \in \Sigma} V(\sigma).
$$
Using~\eqref{eq:euler-sequence}, for any $k \in \{1, \ldots, n \}$,
$$
\ch_k(\cT_X) = \dfrac{1}{k!} \sum_{\alpha \in \Sigma(1)} D_{\alpha}^k.
$$
\end{example}

Recall that the cycle class map provides an isomorphism between the rational Chow ring and the rational cohomology ring of $X$ (\cite[Theorem 12.5.3]{CLS}), hence we will often omit the cycle class map $\mathrm{cl}$ in our notations, when dealing with intersection theory on toric varieties. Let $P\in H^\bullet(X,\R)[x]$ and $\delta\in\lbrace -1,+1 \rbrace^n$ as before. 
From  \cite[Lemma 12.5.1]{CLS}, we may assume that 
$$
 P(x)=\displaystyle\sum_{\sigma\in\Sigma} v_\sigma\,V(\sigma)\wedge\frac{x^{\codim{(\sigma)}}}{\codim{(\sigma)}!}\in H^{\bullet}(X,\RR)[x]
$$
for some real numbers $(v_\sigma)_{\sigma\in\Sigma}$.  We deduce that for any toric sheaf $\cE$ on $X$, and for any invariant cycle $V\subset X$, the number $P_V(\cE)$ can be computed from the intersection theory of torus invariant cycles, the latter being described, for example, in \cite[Section 12.5]{CLS}. By Corollary~\ref{cor:intro}, $P_\delta$-positivity can be verified effectively on toric varieties. We illustrate this in Section~\ref{sec:toric applications}.

\subsection{Equivariant \texorpdfstring{$P$}{P}-stability}
\label{sec:Zstability}
In this Section, we consider an \emph{equivariant} version of $P$-stability. Fix a polynomial $P\in H^\bullet(X,\R)[x]$ as before.
\begin{definition}\label{def:equivariant}
    Let $X$ be a projective $T$-variety, for $T$ a torus.  A $T$-equivariant reflexive sheaf $\cE$ on $X$ is  \emph{equivariantly $P$-stable (resp. $P$-semistable)} if 
    \begin{equation}\label{eq:ineqalitysheaves_equiv}
        P_X(\cF)<P_X(\cE)\  (\mathrm{resp.} \leq P_X(\cE)).
    \end{equation}
    for any $T$-equivariant saturated subsheaf $\cF\subset \cE$ with $0<\rank(\cF)<\rank(\cE)$.
\end{definition}
We can also consider its asymptotic version: if $P_r$ is an asymptotic polynomial equation as in Section~\ref{sec:asymptotic_regimes}, we can consider the associated family $\R\to H^\bullet(X,\R)[x]$ of polynomials. We will then say that $\cE$ is asymptotically (equivariantly) $P_r$-stable (resp. semistable) if  for any (equivariant) saturated strict subsheaf $\cF\subset \cE$, 
$$P_{r,X}(\cF)<P_{r,X}(\cE)$$
(resp. $P_{r,X}(\cF)\leq P_{r,X}(\cE)$) for $r$ large enough.

On $T$-varieties, slope and Gieseker stabilities are equivalent to there equivariant versions (c.f.\ \cite{Koo}). This was successfully used on toric varieties in mathematical physics where slope stable sheaves are ubiquitous in string theories \cite{KnuSha98}, or in mathematics as a toy model to test new constructions of slope stable sheaves (e.g.\ \cite{NaTip,ClaTip,ClaNapTip,Tip}) and to obtain classification results for slope stable tangent sheaves (e.g.\ \cite{DDK,HNS,Nap}).

Let's recall briefly the ingredients in Kool's proof that equivariant Gieseker stability matches with Gieseker stability for torus equivariant reflexive sheaves. In the semi-stable case, from uniqueness of the Harder--Narasimhan filtration (refer to \cite[Chapter 1]{HuLe} for definitions), one obtains that this filtration is by equivariant reflexive sheaves. Hence, equivariant semi-stability implies semi-stability. In the stable case, the Jordan--H\"older filtration may not be unique, but the reflexive hull of its graded object is. By the semi-stable case, if $\cE$ is equivariantly stable, it is semi-stable, and the graded object of its Jordan--H\"older filtration is equivariant by uniqueness. Then, one has to show that each of its stable components is equivariant. This is achieved in \cite[Proposition 3.19]{Koo}, using general facts about representation theory of algebraic tori and $T$-varieties, and also the fact that there are no morphism between two non-isomorphic Gieseker stable sheaves of the same slope.

The main issue to extend Kool's result to $P$-stability is the non-existence in general of Harder--Narasimhan and Jordan--H\"older filtrations. This is mainly due to the fact that considering the saturation of a subsheaf doesn't increase the $P$-slope. Nevertheless, we hope that a combination of $P$-stability and a stronger version of $P$-positivity, called $P$-positivity with respect to quotients in \cite{KScarpa}, could be equivalent to the combination of their equivariant versions.

In the meantime, a class of asymptotic polynomial equations called \emph{adapted to torsion-free sheaves} was identified in \cite{Delloque_adapted}:
\begin{definition}
    \label{def:adapted}
    An asymptotic polynomial equation $P_r$ is adapted to torsion-free sheaves if for any torsion-free sheaf $\cE$ and any coherent sub-sheaf $\cF\subset \cE$ such that $\cE/\cF$ is torsion, one has 
    $$
P_{r,X}(\cF)< P_{r,X}(\cE)    
    $$
    for $r$ large enough.
\end{definition}
This class of adapted polynomials generalizes Gieseker stability and the stability conditions  considered in \cite{Toma_semistab_ampleclasses}, as discussed in Section~\ref{sec:asymptotic_regimes} (c.f.\ \cite[Example 2.4]{Delloque_adapted}).

In asymptotic regimes, for asymptotic polynomial equations adapted to torsion-free sheaves, using the results from \cite{Delloque_adapted}, we obtain Theorem~\ref{theo:intro equiv Zstab asymptotic}. Indeed, the proof is the same as the one given by Kool in \cite[Proposition 3.19]{Koo}, once we have the see-saw property of $P$-slopes (that follows from additivity of the Chern character), and the existence and uniqueness of the aforementioned filtrations \cite[Theorems 2.9 and 2.10 ]{Delloque_adapted}. 

Using Theorem~\ref{theo:intro equiv Zstab asymptotic}, together with the description of toric sheaves and their saturated subsheaves in Section~\ref{sec:toricsheaves} and Lemma~\ref{lem:sat subsheaves equiv case}, we may test efficiently $P$-stability for toric sheaves for polynomials adapted to torsion-free sheaves. We give an explicit example of a $P$-stable tangent bundle
for a $3$-dimensional toric manifold in Section~\ref{sec:application asymptotic case}.

\section{P-positivity and blow-ups}
\label{sec:blowups}

Let $\cE \rightarrow X$ be a vector bundle on a compact manifold of dimension $n \geq 2$. Consider the map
$$
V \mapsto P_V(\cE) = \left(\sum_{j = 0}^n [\gamma_j] \smile \Chern(\cE)\right) \frown [V] = \left(\sum_{j = 0}^n [\gamma_j] \smile \Chern(\cE) \smile \cl(V)\right) \frown [X].
$$
Here, each $\gamma_j$ is a closed real $(n - j,n - j)$ form on $X$. Notice that this map naturally extends to a morphism of groups $H^{*,*}(X,\RR) \longrightarrow \RR$ as,
$$
c \mapsto P_c(\cE) = \left(\sum_{j = 0}^n [\gamma_j] \smile \Chern(\cE) \smile c\right) \frown [X].
$$
Assume moreover that,
$$
P_X(\cE) = \left(\sum_{j = 0}^n [\gamma_j] \smile \Chern(\cE)\right) \frown [X] = 0.
$$

\subsection{Cohomology and uniform $P$-positivity}

Let us start with a simple characterisation of strong $P_\delta$-positivity.

\begin{lemma}[{\cite[Remark 2.5]{KScarpa}}]\label{LEM:P positivité forte}
    $\cE$ is strongly $P_\delta$-positive if and only if $\cE$ is $P_\delta$-positive and $\delta_0\gamma_n > 0$.
\end{lemma}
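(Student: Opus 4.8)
The plan is to unravel both definitions and isolate exactly which inequalities separate strong $P_\delta$-positivity from $P_\delta$-positivity. By Definition~\ref{def:Ppositivity}, $\cE$ is strongly $P_\delta$-positive if and only if it is $P_\delta$-positive \emph{and} $\delta_0 P_V(\cE)>0$ for every $0$-dimensional cycle $V\subset X$. So the statement reduces to checking that this family of inequalities, ranging over all effective $0$-cycles, is equivalent to the single numerical condition $\delta_0\gamma_n>0$.

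To do this, I would first compute $P_V(\cE)$ when $\dim V=0$. Writing $P_V(\cE)=\big(\sum_{j=0}^n[\gamma_j]\smile\Chern(\cE)\smile\cl(V)\big)\frown[X]$ as in~\eqref{eq:Pstab_restricted} and using $\cl(V)\in H^{2n}(X,\Q)$, only the degree-$0$ part of $\sum_j[\gamma_j]\smile\Chern(\cE)$ contributes to the pairing with $[X]$. Since $[\gamma_j]\in H^{2(n-j)}(X)$ and $\Chern_i(\cE)\in H^{2i}(X)$, the product $[\gamma_j]\smile\Chern_i(\cE)$ lies in $H^0(X)$ only when $j=n$ and $i=0$, so that degree-$0$ part equals $\gamma_n\,\rk(\cE)\in H^0(X)\cong\RR$. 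As $X$ is a connected projective variety of dimension $n$, one has $\int_X\cl(V)=\deg(V)$, and therefore
\begin{equation*}
    P_V(\cE)=\gamma_n\,\rk(\cE)\,\deg(V).
\end{equation*}

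The equivalence would then follow immediately: any effective $0$-cycle has $\deg(V)>0$, and $\rk(\cE)>0$ because $\cE$ is a nonzero reflexive sheaf on the integral variety $X$; hence $\delta_0 P_V(\cE)>0$ holds for one (equivalently, every) $0$-dimensional cycle $V$ exactly when $\delta_0\gamma_n>0$. Combined with the first paragraph this gives both implications at once. I do not expect any genuine obstacle here — all the content is in the degree count inside~\eqref{eq:Pstab_restricted} — the only points to keep in mind are the standing hypotheses that $\cE\neq 0$ and that $X$ is connected, which are what make $\rk(\cE)>0$ and $\deg(V)>0$.
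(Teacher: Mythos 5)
Your proposal is correct and follows the same route as the paper: both reduce to computing $P_V(\cE)$ for a $0$-dimensional $V$ via the degree count in~\eqref{eq:Pstab_restricted}, arriving at $P_V(\cE)=\gamma_n\,\rk(\cE)\,\deg(V)$ (the paper writes $\#(V)$ for $\deg(V)$) and concluding from the positivity of $\rk(\cE)$ and $\deg(V)$. Your version merely spells out the degree bookkeeping that the paper leaves implicit.
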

\begin{proof}
If $V \subset X$ has dimension $0$, it is a non-empty finite collection of points. We have,
$$
P_V(\cE) = \left(\sum_{j = 0}^n [\gamma_j] \smile \Chern(\cE)\right) \frown [V] = [\gamma_n]\rk(\cE)\#(V).
$$
We immediately deduce the result.
\end{proof}

Then, we blow-up a point of $X$ and we recall some cohomological facts on the blow-up.

Let $W \subset X$ be a singleton in $X$, seen as an irreducible sub-variety of dimension $0$. Let $\pi \colon \tilde{X} \rightarrow X$ be the blow-up of $W$. Let $D$ be the exceptional divisor so we have the following commutative diagram.
$$
\begin{tikzcd}
    D \ar["\iota_D"']{d}\ar["\pi"]{r} & W \ar["\iota_W"]{d}\\
    \tilde{X} \ar["\pi"']{r} & X
\end{tikzcd}
$$
Moreover, as $W$ is a point, $D$ is isomorphic to $\PP^{n - 1}$. Let $(\varepsilon_0,\ldots,\varepsilon_n)$ be real numbers. When $c \in H^{*,*}(\tilde{X},\RR)$, we consider the \emph{pulled-back polynomial}: 
$$
\tilde{P}_{\varepsilon,c}(\pi^*\cE) \coloneqq \left(\sum_{j = 0}^n ([\pi^*\gamma_j] + \varepsilon_j\cl(D)^{n - j}) \smile \Chern(\cE) \smile c\right) \frown [X].
$$
We want to find conditions on $\cE$ and the $\varepsilon_k$ for $\pi^*\cE$ to be $\tilde{P}_{\varepsilon,\delta}$-positive.

First of all, let us recall the cohomological structure of $\tilde{X}$ in function of the one of $X$. For all integer $1 \leq k \leq n - 1$, there is an isomorphism \cite[Theorem 7.31]{Voisin},
$$
H^{n - k,n - k}(X,\RR) \oplus \RR \cong H^{n - k,n - k}(\tilde{X},\RR).
$$
The injection
$$
H^{n - k,n - k}(X,\RR) \longrightarrow H^{n - k,n - k}(\tilde{X},\RR)
$$
is $\pi^*$. The projection
$$
H^{n - k,n - k}(\tilde{X},\RR) \longrightarrow H^{n - k,n - k}(X,\RR)
$$
is $\pi_*$. The injection
$$
\RR \longrightarrow H^{n - k,n - k}(\tilde{X},\R)
$$
sends $a$ on $(-1)^{n - k}a\,\cl(D)^{n - k}$. The reason why we normalise by $(-1)^{n - k}$ shall appear later. Let us introduce the last projection
$$
l \colon H^{n - k,n - k}(\tilde{X},\RR) \longrightarrow \RR.
$$
If $c \in H^{n - k,n - k}(\tilde{X},\R)$ with $1 \leq k \leq n - 1$, we can write
\begin{equation}
    c = \pi^*\pi_*c + (-1)^{n - k}l(c)\cl(D)^{n - k}.
\end{equation}
When we compute the cap product with $\cl(D)$, we obtain,
\begin{equation}\label{EQ:Expression l(c)}
    c \smile \cl(D) = (-1)^{n - k}l(c)\cl(D)^{n - k + 1}.
\end{equation}
It entirely characterises $l$. In particular, by taking again the cap with $\cl(D)^{k - 1}$ then integrating over $X$ and using the equality $\cl(D)^n \frown [\tilde{X}] = (-1)^{n - 1}$, we obtain,
$$
l(c) = (-1)^{k - 1}(c \smile \cl(D)^k) \frown [\tilde{X}].
$$

In order to study the $\tilde{P}_{\varepsilon,\delta}$-positivity of $\pi^*\cE$ in function of the $P_\delta$-positivity of $\cE$, we need to introduce a slightly stronger condition. For all $0 \leq k \leq n$, let the \textit{effective cone} of dimension $k$,
$$
\Eff_k(X) = \set*{\textstyle\sum\nolimits_{i \in I} a_i\cl(V_i) \suchthat I \textrm{ finite}, V_i \subset X \textrm{ of dimension } k, a_i \geq 0 }.
$$
We define the \textit{pseudo-effective cone} of dimension $k$ as the closure $\overline{\Eff_k(X)}$ of $\Eff_k(X)$ in $H^{n - k,n - k}(X,\RR)$. By linearity, we easily notice that if $\cE$ is $P_\delta$-positive, then, for all $1 \leq k \leq n - 1$ and all $c \in \Eff_k(X)\backslash\{0\}$, $\delta_kP_c(\cE) > 0$. Let $\|\cdot\|$ be any norm on the finite dimensional space $H^{n - k,n - k}(X,\RR)$.

\begin{proposition}\label{PRO:P-positivité uniforme}
    Let $1 \leq k \leq n - 1$. We have equivalence between,
    \begin{enumerate}
        \item For all $c \in \overline{\Eff_k(X)}\backslash\{0\}$, $\delta_kP_c(\cE) > 0$.
        \item There is a positive constant $\epsilon$ such that for all $V \subset X$ of dimension $k$, $\delta_kP_V(\cE) \geq \epsilon\|\cl(V)\|$.
    \end{enumerate}
\end{proposition}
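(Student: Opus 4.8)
The plan is to treat this as a routine compactness argument in the finite-dimensional real vector space $H^{n-k,n-k}(X,\RR)$, using that $c\mapsto\delta_kP_c(\cE)$ is linear --- hence continuous --- and that $\overline{\Eff_k(X)}$ is a closed convex cone. The object mediating between the two conditions is the ``unit slice'' $K\coloneqq\set*{c\in\overline{\Eff_k(X)}\suchthat\|c\|=1}$, which is closed and bounded, hence compact.

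For the implication $(1)\Rightarrow(2)$ I would argue as follows. By $(1)$, the continuous function $c\mapsto\delta_kP_c(\cE)$ is strictly positive on the compact set $K$, so it attains a minimum $\epsilon>0$ there. (If $K=\emptyset$, equivalently $\overline{\Eff_k(X)}=\{0\}$, then every $k$-dimensional subvariety has vanishing cycle class and $(2)$ is trivial; this degenerate case does not actually arise for $1\leq k\leq n-1$, since powers of an ample class are effective and non-zero.) Given any subvariety $V\subset X$ of dimension $k$, we have $\cl(V)\in\Eff_k(X)\subset\overline{\Eff_k(X)}$ and $P_V(\cE)=P_{\cl(V)}(\cE)$ by definition. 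If $\cl(V)=0$ the desired inequality is trivial; otherwise $\cl(V)/\|\cl(V)\|\in K$, and linearity gives $\delta_kP_V(\cE)=\|\cl(V)\|\,\delta_kP_{\cl(V)/\|\cl(V)\|}(\cE)\geq\epsilon\|\cl(V)\|$.

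For $(2)\Rightarrow(1)$ I would first upgrade the bound in $(2)$ from subvarieties to the whole cone $\Eff_k(X)$: writing $c=\sum_{i}a_i\cl(V_i)$ with $a_i\geq 0$, linearity, $(2)$ and the triangle inequality yield
\begin{equation}
\delta_kP_c(\cE)=\sum_{i}a_i\,\delta_kP_{V_i}(\cE)\geq\epsilon\sum_{i}a_i\,\|\cl(V_i)\|\geq\epsilon\,\|c\|.
\end{equation}
Thus $\delta_kP_c(\cE)\geq\epsilon\|c\|$ for all $c\in\Eff_k(X)$, and since $c\mapsto\delta_kP_c(\cE)$ and $c\mapsto\|c\|$ are continuous while $\Eff_k(X)$ is dense in its closure, the inequality persists on all of $\overline{\Eff_k(X)}$; in particular $\delta_kP_c(\cE)>0$ for $c\neq 0$, which is $(1)$.

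I do not anticipate a genuine obstacle: everything is formal once one has the linearity of $P_\bullet(\cE)$ noted just before the statement and the finite-dimensionality of $H^{n-k,n-k}(X,\RR)$. The only points needing a line of care are cycles with vanishing class (for which both sides of the inequality in $(2)$ vanish) and checking that the normalisation $c\mapsto c/\|c\|$ keeps us inside $\overline{\Eff_k(X)}$, i.e.\ that it is genuinely a cone.
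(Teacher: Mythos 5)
Your proposal is correct and follows essentially the same route as the paper: a compactness argument on a normalised slice of the pseudo-effective cone (the paper uses the quotient $\overline{\Eff_k(X)}\backslash\{0\}/\RR_+^*$ where you use the unit sphere slice, which is the same thing) for $(1)\Rightarrow(2)$, and the identical linearity-plus-density argument for $(2)\Rightarrow(1)$. Your extra care about cycles with vanishing class is a minor refinement the paper glosses over, not a different method.
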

\begin{proof}
Assume first that the first point is verified. Let,
$$
f \colon \left\{
\begin{array}{rcl}
    \overline{\Eff_k(X)}\backslash\{0\}/\RR_+^* & \longrightarrow & \R\\
    \R_+^*c & \longmapsto & \displaystyle \frac{\delta_kP_c(\cE)}{\|c\|}
\end{array}
\right.
$$
$f$ is well-defined and continuous. Moreover, by hypothesis, it takes values in $\R_+^*$. Since $H^{n - k,n - k}(X,\RR)$ is finite dimensional, $\overline{\Eff_k(X)}\backslash\{0\}/\RR_+^*$ is compact. Let $\epsilon > 0$ be the minimum of $f$. For all $V \subset H^{n - k,n - k}(X,\RR)$, $\cl(V) \in \overline{\Eff_k(X)}\backslash\{0\}$ so,
$$
\delta_kP_V(\cE) = f(\R_+^*\cl(V))\|\cl(V)\| \geq \epsilon\|\cl(V)\|.
$$

Reciprocally, assume that the second point is verified. Then, for all $c \in \Eff_k(X)$, we can write
$$
c = \sum_{i \in I} a_i\cl(V_i),
$$
with $I$ finite and non-empty and each $a_i$ is positive. We have
$$
\delta_kP_c(\cE) = \sum_{i \in I} a_i\delta_kP_{V_i}(\cE) \geq \epsilon\sum_{i \in I} a_i\|\cl(V_i)\| \geq \epsilon\|c\|.
$$
This inequality remains valid for $c \in \overline{\Eff_k(X)}$. In particular, if $c \in \overline{\Eff_k(X)}\backslash\{0\}$, $\delta_kP_c(\cE) > 0$.
\end{proof}

\begin{definition}
    If the equivalent conditions of Proposition~\ref{PRO:P-positivité uniforme} are satisfied for all $1 \leq k \leq n - 1$ (resp. $0 \leq k \leq n - 1$), we say that $\cE$ is uniformly $P_\delta$-positive (resp. strongly uniformly $P_\delta$-positive).
\end{definition}
\begin{remark}
Since $\Eff_0(X) = \overline{\Eff_0(X)}$ is closed (it is a half line in the real line $H^{n,n}(X,\RR)$), strong uniform $P_\delta$-positivity is equivalent to uniform $P_\delta$-positivity and strong $P_\delta$-positivity.
\end{remark}

Of course, uniform $P_\delta$-positivity implies $P_\delta$-positivity. In the case of the $J$-equation for example, Chen showed that a line bundle is uniformly $P$-positive if and only if it admits a solution to the $J$-equation \cite[Theorem 1.1]{Che21}. Song later improved this correspondence by removing the hypothesis of uniformity \cite[Corollary 1.2]{Song20}. In particular, it proves that these two notions are equivalent for the central charge of the $J$-equation when all $\delta_k$ equal $1$.

We expect the uniformity assumption to be superfluous in a lot of cases but not all, since the effective cones may not be closed. However, in some cases like Fano surfaces and projective spaces, the effective cones are closed \cite[Theorem 1.5.33]{Lazarsfeld} hence $P_\delta$-stability is equivalent to uniform $P_\delta$-stability for any $P$ and $\delta$.

We end this sub-section by describing the pseudo-effective cones of $\tilde{X}$ in function of the ones of $X$.

\begin{lemma}\label{LEM:Cones effectifs pull-back}
    Let $1 \leq k \leq n - 1$ and $c \in H^{n - k,n - k}(\tilde{X},\RR)$. Recall that $c = \pi^*\pi_*c + (-1)^{n - k}l(c)\cl(D)^{n - k}$. There is a constant $C$ independent from $c$ such that,
    \begin{enumerate}
        \item If $\pi_*c \in \overline{\Eff_k(X)}$ and $l(c) = 0$, then $c \in \overline{\Eff_k(\tilde{X})}$.
        \item If $\pi_*c = 0$ and $l(c) < 0$, then $c \in \Eff_k(\tilde{X})$.
        \item If $c \in \overline{\Eff_k(\tilde{X})}$, then $\pi_*c \in \overline{\Eff_k(X)}$ and $l(c) \leq C\|\pi_*c\|$.
    \end{enumerate}
\end{lemma}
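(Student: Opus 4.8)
The plan is to treat each of the three statements separately, using the decomposition $c = \pi^*\pi_*c + (-1)^{n-k}l(c)\cl(D)^{n-k}$ and the effective-cone behaviour of $\pi$. For statement $(1)$, I would first handle the case of an actual effective cycle: if $V \subset X$ has dimension $k$ and does not pass through the blown-up point $W$, then $\pi^{-1}(V)$ is isomorphic to $V$ and $\pi_*\cl(\pi^{-1}(V)) = \cl(V)$ with $l(\pi^{-1}(V)) = 0$, so $\pi^*\cl(V) \in \Eff_k(\tilde X)$; since $W$ is a point, a general translate (or a general cycle in the same class — but to stay safe, one argues moving-lemma style, or simply notes $\Eff_k(X)$ is generated by such $V$ because any $k$-cycle with $k \ge 1$ is rationally equivalent to one avoiding a fixed point) handles all of $\Eff_k(X)$. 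Then $\pi^*$ maps $\Eff_k(X)$ into $\Eff_k(\tilde X)$, and passing to closures gives $\pi^*\overline{\Eff_k(X)} \subset \overline{\Eff_k(\tilde X)}$, which is exactly $(1)$.

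For statement $(2)$: if $\pi_*c = 0$ and $l(c) < 0$, then $c = (-1)^{n-k}l(c)\cl(D)^{n-k}$. Now $D \cong \PP^{n-1}$ and the restriction $\cl(D)|_D = -H$ where $H$ is the hyperplane class of $\PP^{n-1}$ (this is the standard normal-bundle computation, $\mathcal{O}_D(D) = \mathcal{O}_{\PP^{n-1}}(-1)$, and it is exactly why the normalisation by $(-1)^{n-k}$ was introduced earlier in the excerpt). Pushing forward a linear subspace $\PP^{k-1} \subset \PP^{n-1} = D$ along $\iota_D$ gives an effective $k$-cycle on $\tilde X$ whose class is $(\iota_D)_*(H^{n-1-k}) = (-1)^{n-1-k}\cl(D)^{n-1-k}\smile \cl(D) \cdot(\text{sign bookkeeping})$; chasing signs, $(\iota_D)_*[\PP^{k-1}]$ equals $(-1)^{n-k}\cl(D)^{n-k}$ up to a positive constant, so $(-1)^{n-k}\cl(D)^{n-k} \in \Eff_k(\tilde X)$, and multiplying by the positive number $-l(c) = |l(c)|$ keeps us in $\Eff_k(\tilde X)$. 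This gives $(2)$.

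Statement $(3)$ is the converse and the main obstacle. Given $c \in \overline{\Eff_k(\tilde X)}$, I would show $\pi_*c \in \overline{\Eff_k(X)}$ first: $\pi_*$ is continuous and sends effective $k$-cycles on $\tilde X$ to effective $k$-cycles on $X$ (the image cycle, with its induced multiplicities, is effective), hence $\pi_*\Eff_k(\tilde X) \subset \Eff_k(X)$ and passing to closures gives the claim. The genuinely delicate point is the bound $l(c) \le C\|\pi_*c\|$. Here I would use that $l(c) = (-1)^{k-1}(c \smile \cl(D)^k)\frown[\tilde X]$ is a linear functional on $H^{n-k,n-k}(\tilde X,\RR)$; on the pseudo-effective cone one wants to bound it above in terms of $\|\pi_*c\|$. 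The natural device is to pick an ample class $A$ on $X$ and consider $\pi^*A$, which is nef on $\tilde X$ and strictly positive on $\overline{\Eff_k(\tilde X)}\setminus(\text{cycles supported on }D)$; combined with a class that is positive on the $D$-supported part, one builds a class $\Theta$ on $\tilde X$ with $\Theta\smile(\,\cdot\,)\frown[\tilde X]$ bounding $\|c\|$ from above on $\overline{\Eff_k(\tilde X)}$, and with $\pi_*$ of the relevant intersection controlled by $\|\pi_*c\|$. Concretely, since $\pi^*A$ restricted to $D$ is zero, $\cl(D)$ and $\pi^*(\text{ample})$ together dominate any norm on $H^{n-k,n-k}(\tilde X,\RR)$ restricted to the pseudo-effective cone, and the coefficient of $\cl(D)^{n-k}$ — i.e.\ $l(c)$ — is then estimated by the value of $(\pi^*A)^{?}$-type pairings which descend to $\|\pi_*c\|$ on $X$ by the projection formula. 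I would extract the constant $C$ from a compactness argument on the (compact) base of the pseudo-effective cone of $\tilde X$, exactly as in the proof of Proposition~\ref{PRO:P-positivité uniforme}: the function $c \mapsto (C\|\pi_*c\| - l(c))/\|c\|$ on $\overline{\Eff_k(\tilde X)}\setminus\{0\}/\RR_+^*$ is continuous and one needs it bounded below; the only place it could fail is along cycles with $\pi_*c = 0$, i.e.\ $D$-supported classes, where $l(c) \le 0$ by statement $(2)$'s computation (effective $D$-supported $k$-cycles have $l \le 0$), so the functional is automatically nonnegative there and the compactness gives a uniform $C$. The sign bookkeeping in identifying $\cl(D)|_D$ with $-H$ and in the pushforward formula $(\iota_D)_* H^{j} = (-1)^{?}\cl(D)^{n-1-j+1}$ is the fiddly part I would need to get exactly right, but it is routine intersection theory on a point-blow-up.
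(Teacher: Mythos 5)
Your treatments of parts (2) and (3) follow essentially the paper's own argument: a linear subspace of $D\cong\PP^{n-1}$ generates the ray of $D$-supported effective classes, and the constant $C$ in (3) comes from a compactness argument on the projectivised pseudo-effective cone of $\tilde X$, using that $l\le 0$ (in fact $<0$ on nonzero classes) on the face $\{\pi_*c=0\}$. Two small corrections there: the relevant subspace is a $\PP^k\subset D$, not a $\PP^{k-1}$, and its class is $\iota_{D*}(H_D^{n-k-1})=(-1)^{n-k-1}\cl(D)^{n-k}$, i.e.\ it has $l=-1$; so in (2) it is $-l(c)>0$ times \emph{this} class (not $(-1)^{n-k}\cl(D)^{n-k}$) that recovers $c$ — your displayed sign is off, though you correctly record the consequence ``effective $D$-supported cycles have $l\le0$'' later. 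The ample-class construction in (3) is an unnecessary detour; the final compactness step is what actually does the work, as in the paper.

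The genuine gap is in part (1), in the step where you handle subvarieties $V$ passing through the blown-up point $W$ by asserting that any effective $k$-cycle with $k\ge1$ is rationally equivalent to an effective cycle avoiding a fixed point (or can be replaced by a ``general translate''). This is false in general: rigid cycles through $W$ cannot be moved. For example, if $X$ carries an irreducible curve $E$ with $\cl(E)\cdot\cl(E)<0$ spanning an extremal ray (a $(-1)$-curve on a non-minimal surface, say) and $W\in E$, then $E$ is the unique effective cycle in its class, and $\cl(E)$ does not even lie in the \emph{closed} cone generated by curves avoiding $W$, since every class in that cone pairs nonnegatively with $\cl(E)$. The Chow moving lemma does not rescue this: it produces a rationally equivalent cycle that is a \emph{difference} of effective cycles in good position, not an effective one. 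The correct argument — the one the paper uses — needs no position hypothesis: for $V\subset X$ irreducible of dimension $k$, let $\tilde V_1=\overline{\pi^{-1}(V)\setminus D}$ be the strict transform; then $\pi_*\cl(\tilde V_1)=\cl(V)$ and $l(\cl(\tilde V_1))=d\ge0$, where $d$ is the degree of the $(k-1)$-dimensional cycle $\tilde V_1\cap D$ inside $D\cong\PP^{n-1}$, so that
$$
\pi^*\cl(V)=\cl(\tilde V_1)-(-1)^{n-k}d\,\cl(D)^{n-k}=\cl(\tilde V_1)+\cl(\tilde V_2)
$$
for $\tilde V_2\subset D$ any subvariety of dimension $k$ and degree $d$. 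Hence $\pi^*\Eff_k(X)\subset\Eff_k(\tilde X)$, and (1) follows by continuity of $\pi^*$.
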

\begin{proof}
For the first point, we may assume without loss of generality that $\pi_*c = \cl(V)$ with $V \subset X$ irreducible of dimension $k$. Then, $c = \pi^*\cl(V)$. Let
$$
\tilde{V}_1 = \overline{\pi^{-1}(V)\backslash D}.
$$
It has dimension $k$ and we have $\pi_*\cl(\tilde{V}_1) = \cl(\pi(\tilde{V}_1)) = \cl(V)$ so
\begin{equation}\label{EQ:Expression c}
    \cl(\tilde{V}_1) = \pi^*\cl(V) + (-1)^{n - k}l(\cl(\tilde{V}_1))\cl(D)^{n - k} = c + (-1)^{n - k}l(\cl(\tilde{V}_1))\cl(D)^{n - k}.
\end{equation}
$\tilde{V}_1$ intersects properly $D$. We deduce that their intersection $\tilde{V}_1 \cap D$ is a sub-variety of $D$ of dimension $k - 1$. Recall that $D$ is isomorphic to $\P^{n - 1}$. Let $H_D$ be a hyperplane class. We have $\tilde{V}_1 \cap D \in \iota_{D*}(dH_D^{n - k})$ where $d \geq 0$ is the degree of this variety and $\iota_D \colon D \rightarrow X$ is the inclusion. We deduce that,
\begin{equation}\label{EQ:Intersection 1}
    \cl(\tilde{V}_1) \smile \cl(D) = \cl(\tilde{V}_1 \cap D) = \iota_{D*}(dH_D^{n - k}) = (-1)^{n - k}d\,\cl(D)^{n - k + 1}.
\end{equation}
Now, by~\eqref{EQ:Expression l(c)}, we have,
\begin{equation}\label{EQ:Intersection 2}
    \cl(\tilde{V}_1) \smile \cl(D) = (-1)^{n - k}l(\cl(\tilde{V}_1))\cl(D)^{n - k + 1}.
\end{equation}
From~\eqref{EQ:Intersection 1} and~\eqref{EQ:Intersection 2}, we deduce that $l(\cl(\tilde{V}_1)) = d \geq 0$. Let $\tilde{V}_2 \subset D$ be a sub-variety of dimension $k$ and degree $d$. We have,
\begin{equation}\label{EQ:Expression cl(V2)}
    \cl(\tilde{V}_2) = \iota_{D*}(dH_D^{n - k - 1}) = (-1)^{n - k - 1}d\,\cl(D)^{n - k}.
\end{equation}
We deduce from~\eqref{EQ:Expression c} and~\eqref{EQ:Expression cl(V2)} that
\begin{equation}
\begin{split}
    c =& \cl(\tilde{V}_1) - (-1)^{n - k}l(\cl(\tilde{V}_1))\cl(D)^{n - k} = \cl(\tilde{V}_1) - (-1)^{n - k}d\,\cl(D)^{n - k} =\\
    =& \cl(\tilde{V}_1) + \cl(\tilde{V}_2).
\end{split}
\end{equation}
Therefore, $c \in \Eff_k(\tilde{X})$.

For the second point, we deduce from the computation of~\eqref{EQ:Expression cl(V2)} that if $\tilde{V} \subset D$ has dimension $k$ and degree $1$, $\cl(\tilde{V}) = -(-1)^{n - k}\cl(D)^{n - k}$ so,
$$
\pi_*\cl(\tilde{V}) = 0, \qquad l(\cl(\tilde{V})) = -1 < 0.
$$
From this, we easily deduce the second point.

For the third point, if $c = \cl(\tilde{V})$ for some $\tilde{V} \subset \tilde{X}$ irreducible of dimension $k$, $\pi_*c = 0$ if $\tilde{V} \subset D$, $\pi_*c = \cl(\pi(\tilde{V}))$ else. We deduce easily that for all $c \in \overline{\Eff_k(\tilde{X})}$, $\pi_*c \in \overline{\Eff_k(X)}$. Finally, recall that the pseudo-effective cones don't contain any line. Therefore, for all $c \in \overline{\Eff_k(\tilde{X})}$, if $\pi_*c = 0$, then $l(c) \leq 0$. We deduce that the function
$$
\begin{array}{rcl}
    \overline{\Eff_k(\tilde{X})}\backslash\{0\}/\R_+^* & \longrightarrow & \R \cup \{-\infty\}\\
    c & \longmapsto & \displaystyle \frac{l(c)}{\|\pi_*c\|}
\end{array}
$$
is well-defined and continuous. By an argument of compactness, this function reaches a maximum $C$ and for all $c \in \overline{\Eff_k(\tilde{X})}$, $l(c) \leq C\|\pi_*c\|$.
\end{proof}

\subsection{Pull-back of a bundle}

We give here a necessary and sufficient condition for $\pi^*\cE$ to be uniformly $\tilde{P}_{\varepsilon,\delta}$-positive when $(\varepsilon_0,\ldots,\varepsilon_n)$ are small enough. Let us start by computing $\tilde{P}_{\varepsilon,c}(\pi^*\cE)$ in function of $\varepsilon$.

\begin{lemma}\label{LEM:Calcul P tilde}
    We have,
    $$
    \tilde{P}_{\varepsilon,\tilde{X}}(\pi^*\cE) = (-1)^{n - 1}\rk(\cE)\varepsilon_0 + \varepsilon_n\Chern_n(\cE) \frown [X]
    $$
    and for all $1 \leq k \leq n - 1$ and all class $c \in H^{n - k,n - k}(X,\RR)$,
    $$
    \tilde{P}_{\varepsilon,c}(\pi^*\cE) = P_{\pi_*c}(\cE) + \varepsilon_n(\Chern_k(\cE) \smile \pi_*c) \frown [X] + (-1)^{k - 1}\rk(\cE)l(c)\varepsilon_{n - k}.
    $$
\end{lemma}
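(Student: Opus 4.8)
The plan is to expand $\tilde{P}_{\varepsilon,c}(\pi^*\cE)$ by splitting each coefficient $\pi^*[\gamma_j]+\varepsilon_j\cl(D)^{n-j}$ into its pulled-back part and its exceptional part, and then to apply three standard facts. First, $\Chern(\pi^*\cE)=\pi^*\Chern(\cE)$ and $\pi^*$ is a ring homomorphism. Second, the projection formula $(\pi^*a\smile b)\frown[\tilde X]=(a\smile\pi_*b)\frown[X]$ holds, together with $\pi_*\pi^*=\id$. Third, since $D\cong\P^{n-1}$ with $\iota_D^*\cl(D)=-H_D$ ($H_D$ the hyperplane class), one has $\cl(D)^m=(-1)^{m-1}\iota_{D*}(H_D^{m-1})$ for $m\geq1$, and $\iota_D^*\pi^*a=(\pi\circ\iota_D)^*a=0$ whenever $\deg a>0$, because $\pi\circ\iota_D$ factors through the point $W$. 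The two consequences that do the work are: $\pi_*(\cl(D)^m)=0$ for $1\leq m\leq n-1$, and $\cl(D)^m\smile\pi^*a=0$ for $m\geq1$ and $\deg a>0$; the borderline power is covered by the normalisation $\cl(D)^n\frown[\tilde X]=(-1)^{n-1}$ recorded above.

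For the first identity I set $c=1$ and write $\tilde{P}_{\varepsilon,\tilde X}(\pi^*\cE)=A+B$, with $A$ the contribution of the $\pi^*[\gamma_j]$ and $B$ that of the $\varepsilon_j\cl(D)^{n-j}$. Since $\pi_*1=1$, the projection formula gives $A=\big(\big(\textstyle\sum_j[\gamma_j]\smile\Chern(\cE)\big)\smile\pi_*1\big)\frown[X]=P_X(\cE)=0$ by assumption. In $B$, matching degrees forces $\cl(D)^{n-j}$ to pair with $\Chern_j(\cE)$: the term $j=0$ gives $(-1)^{n-1}\rk(\cE)\varepsilon_0$ via $\cl(D)^n\frown[\tilde X]=(-1)^{n-1}$, the term $j=n$ gives $\varepsilon_n\Chern_n(\cE)\frown[X]$, and each term with $1\leq j\leq n-1$ equals $\varepsilon_j(\pi_*\cl(D)^{n-j}\smile\Chern(\cE))\frown[X]=0$. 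This yields the first formula.

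For the second identity I fix $1\leq k\leq n-1$ and a class $c\in H^{n-k,n-k}(\tilde X,\RR)$ (in the statement $c$ is to be read on $\tilde X$), and split once more. The $\pi^*[\gamma_j]$ part is, by the projection formula, $\big(\textstyle\sum_j[\gamma_j]\smile\Chern(\cE)\smile\pi_*c\big)\frown[X]=P_{\pi_*c}(\cE)$. For the $\varepsilon_j\cl(D)^{n-j}$ part I substitute $c=\pi^*\pi_*c+(-1)^{n-k}l(c)\cl(D)^{n-k}$ and treat the two pieces separately. In the $\pi^*\pi_*c$ piece the projection formula turns the generic term into $\varepsilon_j(\pi_*\cl(D)^{n-j}\smile\Chern(\cE)\smile\pi_*c)\frown[X]$: this vanishes for $1\leq j\leq n-1$, vanishes for $j=0$ because $\pi_*c$ has positive degree while $\pi_*\cl(D)^n$ is already top-degree, and for $j=n$ equals $\varepsilon_n(\Chern_k(\cE)\smile\pi_*c)\frown[X]$. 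In the $\cl(D)^{n-k}$ piece the generic term is $(-1)^{n-k}l(c)\,\varepsilon_j\,(\cl(D)^{2n-j-k}\smile\pi^*\Chern(\cE))\frown[\tilde X]$, which survives only when $2n-j-k=n$, i.e.\ $j=n-k$ (forcing the Chern-character component to be $\Chern_0(\cE)=\rk(\cE)$), and then equals $(-1)^{n-k}l(c)\,\varepsilon_{n-k}\,\rk(\cE)\,(\cl(D)^n\frown[\tilde X])=(-1)^{k-1}\rk(\cE)l(c)\,\varepsilon_{n-k}$, using $(-1)^{n-k}(-1)^{n-1}=(-1)^{k-1}$. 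Adding the three contributions gives the second formula.

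The computation is otherwise routine; the one place to be careful is the degree bookkeeping that decides which Chern-character components and which powers of $\cl(D)$ survive capping with the fundamental class, and the consequent tracking of the signs $(-1)^{n-k}$, $(-1)^{n-1}$, $(-1)^{m-1}$. I expect the main obstacle to be a sign slip in the final step; I would guard against it by checking consistency with the $k$-independent normalisation $\cl(D)^n\frown[\tilde X]=(-1)^{n-1}$ and with~\eqref{EQ:Expression l(c)}.
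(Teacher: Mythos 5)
Your proof is correct and follows essentially the same route as the paper's: split each coefficient $\pi^*[\gamma_j]+\varepsilon_j\cl(D)^{n-j}$ into its pulled-back and exceptional parts, decompose $c=\pi^*\pi_*c+(-1)^{n-k}l(c)\cl(D)^{n-k}$, and evaluate the pieces via the projection formula, the vanishing $\cl(D)^m\smile\pi^*a=0$ for $m\geq 1$ and $\deg a>0$, and the normalisation $\cl(D)^n\frown[\tilde X]=(-1)^{n-1}$. The degree bookkeeping and signs all check out, and you are right that $c$ in the statement should be read as a class on $\tilde X$.
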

\begin{proof}
Let us start with $\tilde{X}$. Recall that for all classes $c \in H^{i,i}(X,\RR)$ with $i > 0$ and all $j > 0$, $\pi^*c \smile \cl(D)^j = 0$.
\begin{equation}
    \begin{split}
        \tilde{P}_{\varepsilon,\tilde{X}}&(\pi^*\cE)  = \left(\sum_{j = 0}^n (\pi^*[\gamma_j] + \varepsilon_j\cl(D)^{n - j}) \smile \Chern_j(\pi^*\cE)\right) \frown [\tilde{X}]\\
    & = \pi^*\left(\sum_{j = 0}^n [\gamma_j] \smile \Chern_j(\cE)\right) \frown [\tilde{X}] + \left(\sum_{j = 0}^n \varepsilon_j\cl(D)^{n - j} \smile \pi^*\Chern_j(\cE)\right) \frown [\tilde{X}]\\
    & = \left(\sum_{j = 0}^n [\gamma_j] \smile \Chern_j(\cE)\right) \frown [X] + \rk(\cE)\varepsilon_0\cl(D)^n \frown [\tilde{X}] + \varepsilon_n\pi^*\Chern_n(\cE) \frown [\tilde{X}]\\
    & = (-1)^{n - 1}\rk(\cE)\varepsilon_0 + \varepsilon_n\Chern_n(\cE) \frown [X].
    \end{split}
\end{equation}
The last equality comes from the assumption $P_X(\cE) = 0$ and the equality $\cl(D)^n \frown [\tilde{X}] = (-1)^{n - 1}$.

Then, let $1 \leq k \leq n - 1$ be an integer and $c \in H^{n - k,n - k}(C,\RR)$.
\begin{equation}
\begin{split}
    \tilde{P}_{\varepsilon,c}(\pi^*\cE)  =  \sum_{j = n - k}^n \Big( 
    (\pi^*[\gamma_j] + \varepsilon_j\cl(D)^{n - j}) \smile \Chern_{j - n + k}(\pi^*\cE)& \\
    \smile(\pi^*\pi_*c + (-1)^{n - k}l(c)\cl(D)^{n - k}) 
    &\Big) \frown [\tilde{X}]
\end{split}
\end{equation}
and splitting the sum we obtain
\begin{equation}
\begin{split}
    \tilde{P}_{\varepsilon,c}(\pi^*\cE)  = &  \pi^*\left(\sum_{j = n - k}^n [\gamma_j] \smile \Chern_{j - n + k}(\cE) \smile \pi_*c\right) \frown [\tilde{X}] \\
    &  + \left(\sum_{j = n - k}^n \varepsilon_j\cl(D)^{n - j} \smile \pi^*\Chern_{j - n + k}(\cE) \smile \pi^*\pi_*c\right) \frown [\tilde{X}]\\
    + (-1)^{n - k} & l(c)\left(\sum_{j = n - k}^n \pi^*[\gamma_j] \smile \pi^*\Chern_{j - n + k}(\cE) \smile \cl(D)^{n - k}\right) \frown [\tilde{X}]\\
      + (-1)^{n - k} & l(c)\left(\sum_{j = n - k}^n \varepsilon_j\cl(D)^{n - j} \smile \pi^*\Chern_{j - n + k}(\cE) \smile \cl(D)^{n - k}\right) \frown [\tilde{X}] \\
     = &  \left(\sum_{j = n - k}^n [\gamma_j] \smile \Chern_{j - n + k}(\cE) \smile \pi_*c\right) \frown [X] \\
    &  + \varepsilon_n\pi^*(\Chern_k(\cE) \smile \pi_*c) \frown [\tilde{X}] + (-1)^{n - k}l(c)\varepsilon_{n - k}(-1)^{n - 1}\rk(\cE)  \\
     = &  P_{\pi_*c}(\cE) + \varepsilon_n(\Chern_k(\cE) \smile \pi_*c) \frown [X] + (-1)^{k - 1}\rk(\cE)l(c)\varepsilon_{n - k}.
\end{split}
\end{equation}
\end{proof}

For the rest of this section, we study how (strong) uniform $P_\delta$-positivity is preserved by pull-backs. For all $1 \leq k \leq n$, we endow the spaces $H^{n - k,n - k}(\tilde{X},\RR)$ with the following norms,
$$
\|c\| = \|\pi_*c\| + |l(c)|.
$$
The choice of the norms doesn't change anything since these spaces have finite dimensions. We can now conclude the proof of Theorem~\ref{THE:P positivité pull-back}.

\begin{proof}[Proof of Theorem~\ref{THE:P positivité pull-back}]\ 

\noindent\framebox{1 $\Rightarrow$ 3} By Lemma~\ref{LEM:Calcul P tilde}, for any $\varepsilon$ we have $\tilde{P}_{\varepsilon,\tilde{X}}(\pi^*\cE) = 0$ if and only if
\begin{equation}\label{eq:eps0_normalisation}
    \varepsilon_0 = (-1)^n\frac{\Chern_n(\cE) \frown [X]}{\rk(\cE)}\varepsilon_n.
\end{equation}
As $\pi^*\cE$ is (strongly) $\tilde{P}_{\varepsilon,\delta}$-positive,~\eqref{eq:eps0_normalisation} holds by definition.

Let $1 \leq k \leq n - 1$. By Lemma~\ref{LEM:Cones effectifs pull-back}, the class $c \in H^{n - k,n - k}(\tilde{X},\RR)$ such that $\pi_*c = 0$ and $l(c) = -1$ lies in $\Eff_k(\tilde{X})$ and by Lemma~\ref{LEM:Calcul P tilde},
\begin{equation}
    \begin{split}
        \tilde{P}_{\varepsilon,c}(\pi^*\cE) = & P_{\pi_*c}(\cE) + \varepsilon_n(\Chern_k(\cE) \smile \pi_*c) \frown [X] + (-1)^{k - 1}\rk(\cE)l(c)\varepsilon_{n - k}\\
        = & (-1)^k\rk(\cE)\varepsilon_{n - k}.
    \end{split}
\end{equation}
By assumption, this quantity has the sign of $\delta_k$ is positive hence $(-1)^k\delta_k\varepsilon_{n - k} > 0$. Notice that this holds for any $\varepsilon$ and even without any $P_\delta$-positivity assumption on $\cE$.

\smallskip

\noindent\framebox{2 $\Rightarrow$ 1} Trivial.

\smallskip

\noindent\framebox{3 $\Rightarrow$ 2} Still by Lemma~\ref{LEM:Calcul P tilde}, we have $\tilde{P}_{\varepsilon,\tilde{X}}(\pi^*\cE) = 0$. Let $c \in \overline{\Eff_k(\tilde{X})}$. By Lemma~\ref{LEM:Cones effectifs pull-back}, $\pi_*c \in \overline{\Eff_k(X)}$ and there is a positive constant $C$ such that
\begin{equation}\label{EQ:Borne l(c)}
    l(c) \leq C\|\pi_*c\|.
\end{equation}
Moreover, by Proposition~\ref{PRO:P-positivité uniforme}, there is a positive constant $\epsilon$ such that
\begin{equation}\label{EQ:Borne P_c(E)}
    \delta_kP_{\pi_*c}(\cE) \geq \epsilon\|\pi_*c\|,
\end{equation}
since $\pi_*c \in \overline{\Eff_k(X)}$. Finally, by a continuity argument, there is a constant $C'$ only depending on the Chern character of $\cE$ such that
\begin{equation}\label{EQ:Borne Ch smile c}
    \left\lvert(\Chern_k(\cE) \smile \pi_*c) \frown [X]\right\rvert \leq C'\|\pi_*c\|.
\end{equation}
From Lemma~\ref{LEM:Calcul P tilde} we deduce that
\begin{equation}
    \delta_k\tilde{P}_{\varepsilon,c}(\pi^*\cE) = \delta_kP_{\pi_*c}(\cE) + \delta_k\varepsilon_n(\Chern_k(\cE) \smile \pi_*c) \frown [X] + (-1)^{k - 1}\rk(\cE)l(c)\delta_k\varepsilon_{n - k},
\end{equation}
and as $(-1)^k\delta_k\varepsilon_{n - k} > 0,$
\begin{equation}
    \delta_k\tilde{P}_{\varepsilon,c}(\pi^*\cE) = \delta_kP_{\pi_*c}(\cE) + \delta_k\varepsilon_n(\Chern_k(\cE) \smile \pi_*c) \frown [X] - \rk(\cE)l(c)\left\lvert\varepsilon_{n - k}\right\rvert.
\end{equation}
Together with~\eqref{EQ:Borne l(c)},~\eqref{EQ:Borne P_c(E)}, and~\eqref{EQ:Borne Ch smile c} this gives
\begin{equation}
    \begin{split}
        \delta_k\tilde{P}_{\varepsilon,c}(\pi^*\cE) & \geq \epsilon\|\pi_*c\| - C'|\varepsilon_n|\|\pi_*c\| - C\rk(\cE)|\varepsilon_{n - k}|\|\pi_*c\| \\
        & = (\epsilon - C'\|\varepsilon_n\| - C\rk(\cE)|\varepsilon_{n - k}|)\|\pi_*c\|.
    \end{split}
\end{equation}
Let us set $\eta = \frac{\epsilon}{2(C' + C\rk(\cE))} > 0$. If $|\varepsilon_{n - k}| \leq \eta$, $|\varepsilon_n| \leq \eta$ and $\pi_*c \neq 0$, then
$$
\delta_k\tilde{P}_{\varepsilon,c}(\pi^*\cE) \geq \frac{\epsilon}{2}\|\pi_*c\| > 0.
$$
If $\pi_*c = 0$ and $c \neq 0$,~\eqref{EQ:Borne l(c)} implies that $l(c) < 0$. In this case, by Lemma~\ref{LEM:Calcul P tilde},
$$
\delta_k\tilde{P}_{\varepsilon,c}(\pi^*\cE) = (-1)^k\rk(\cE)\delta_k\varepsilon_{n - k} > 0.
$$
Hence the uniform $\tilde{P}_{\varepsilon,\delta}$-positivity of $\pi^*\cE$.

By Lemma~\ref{LEM:P positivité forte}, we easily show that the theorem holds for strong $P_\delta$-positivity too as long as $|\varepsilon_n| < \gamma_n$.
\end{proof}

\section{Applications}
\label{sec:applications}

\subsection{Toric examples of $Z$-positive bundles}
\label{sec:toric applications}
We will now prove the following propositions, as applications of Theorem~\ref{theo:intro equiv} and the discussion in Section~\ref{sec:toric case} (in particular Section~\ref{sec:slopes}):

\begin{proposition}
 \label{prop:introhirzebruchdHYM}
 Let $X$ be a Hirzebruch surface and let $\cE=-K_X$ or $\cE=\cT_X$ be its anti-canonical bundle or its tangent bundle. Then there is an ample line bundle $L$ on $X$ such that for the central charge $Z = (L, \rho, U)$ of the dHYM equation, $\cE$ is $Z$-positive.
\end{proposition}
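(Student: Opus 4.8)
The plan is a direct toric computation, made finite by Corollary~\ref{cor:intro}. Write $X=\mathbb{F}_a$ for a Hirzebruch surface: its fan has four rays, so $X$ carries four torus-invariant curves $D_1,\dots,D_4$, two of which are fibres of the ruling $X\to\mathbb{P}^1$ (mutually linearly equivalent, of self-intersection $0$) and two of which are sections of self-intersection $a$ and $-a$, with the standard intersection table between them. Since $X$ is a smooth projective toric surface, Corollary~\ref{cor:intro}, applied to the polynomial $P_{(Z,\cE)}$ attached to the dHYM central charge so that $Z$-positivity is the relevant instance of $P_\delta$-positivity (Section~\ref{sec:Zcritic example}, Definition~\ref{def:Zpositivity}), reduces $Z$-positivity of $\cE$ to the four inequalities $\Im\!\big(\overline{Z_X(\cE)}\,Z_{D_k}(\cE)\big)>0$. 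For the dHYM charge with K\"ahler class $\omega=c_1(L)$ we have $Z_X(\cE)=-\int_X e^{-i\omega}\ch(\cE)$ and $Z_{D}(\cE)=\big[-e^{-i\omega}\ch(\cE)\big]^{(1,1)}\frown[D]$; on a surface these reduce to
\[
Z_X(\cE)=-\!\int_X\!\ch_2(\cE)+\tfrac12\rk(\cE)\!\int_X\!\omega^2+i\,\big(\omega\cdot c_1(\cE)\big),\qquad Z_{D}(\cE)=-\,c_1(\cE)\cdot D+i\,\rk(\cE)\,(\omega\cdot D).
\]

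Next I would insert the Chern data. For $\cE=-K_X$: $\rk=1$, $c_1(\cE)=-K_X$, and $\int_X\ch_2(\cE)=\tfrac12\int_X(-K_X)^2=4$. For $\cE=\cT_X$, Example~\ref{ex:chern-of-tangent} gives $\rk=2$, $c_1(\cE)=-K_X$, and $\ch_2(\cE)=\tfrac12\sum_\alpha D_\alpha^2=0$ (equivalently $(-K_X)^2=8=2\,\chi(\mathbb{F}_a)$). In both cases $c_1(\cE)=-K_X$, so a short computation turns each inequality into
\[
\Big(\tfrac12\rk(\cE)\,\omega^2-\textstyle\int_X\ch_2(\cE)\Big)\rk(\cE)\,(\omega\cdot D)+\big(\omega\cdot(-K_X)\big)\big((-K_X)\cdot D\big)>0 .
\]
Scaling $\omega=m\,\omega_0$ for a fixed ample $\omega_0$ and large $m>0$, the dominant term is $\tfrac12\rk(\cE)^2(\omega_0^2)(\omega_0\cdot D)\,m^3$, which is strictly positive because $\omega_0$ is K\"ahler and $D$ is an effective curve. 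Hence all four inequalities hold once $m$ is large enough (the threshold depending only on $a$, $\cE$ and $\omega_0$), and one takes $L=mL_0$ for any ample $L_0$. In fact for $\cE=\cT_X$ the first summand is $2\omega^2(\omega\cdot D)>0$ and the second is non-negative on every $D_k$ except the $(-a)$-section when $a\ge3$, so for $a\le2$ no rescaling is needed there; for $\cE=-K_X$ there are analogous cancellations (for instance on $\mathbb{P}^1\times\mathbb{P}^1$ every ample $L$ works), which I would record but not dwell on.

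The point that rules out a naive argument is that for $a\ge3$ neither $-K_X$ nor $\cT_X$ is nef: on the $(-a)$-section $D$ one has $(-K_X)\cdot D=2-a<0$, so the term $\big(\omega\cdot(-K_X)\big)\big((-K_X)\cdot D\big)$ is negative and must be absorbed by the cubic term --- this is exactly where the phase rotation built into $Z$-positivity, rather than plain slope positivity, does the work. I do not expect any conceptual obstacle beyond the bookkeeping of the four intersection numbers on $\mathbb{F}_a$ and checking that the $m^3$ coefficient is positive in each case. As a remark, for the line bundle $\cE=-K_X$ one can instead verify the Collins--Jacob--Yau numerical criterion on the same four toric curves to produce, for $m\gg0$, a dHYM metric on $-K_X$ with respect to $m\,\omega_0$; that metric is $P$-positive by Lemma~\ref{LEM:dHYM positive}, and the adaptation of McCarthy's theorem recalled after~\eqref{eq:Pstab_restricted} then yields $Z$-positivity directly. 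The direct computation above is shorter and treats $-K_X$ and $\cT_X$ on the same footing.
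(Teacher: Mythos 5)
Your proposal is correct and follows essentially the same route as the paper: both reduce $Z$-positivity to finitely many torus-invariant curves on the Hirzebruch surface (you via Corollary~\ref{cor:intro}, the paper via the generators $F$ and $H-rF$ of the effective cone) and then compute the very same quantities $\Im\big(\overline{Z_X(\cE)}\,Z_D(\cE)\big)$ from the intersection numbers $F^2=0$, $F\cdot H=1$, $H^2=r$ and the Chern characters $\ch(-K_X)=1+(2-r)F+2H+4F\cdot H$, $\ch(\cT_X)=2+(2-r)F+2H$. The only difference is the concluding step: you scale $L=mL_0$ and let the cubic term $\tfrac12\rk(\cE)^2(\omega_0^2)(\omega_0\cdot D)\,m^3$ dominate, whereas the paper keeps $L=aF+bH$ general and writes out the explicit inequalities in $(a,b)$ under which positivity holds (which in particular confirms your side observations that no condition is needed for $\cT_X$ when $r\le 2$, and for $-K_X$ on $\P^1\times\P^1$).
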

We also give a $3$-dimensional example: 
\begin{proposition}
 \label{prop:introFano3fold}
 Consider the toric Fano $3$-fold $X = \P \left( \cO_{\P^1 \times \P^1} \oplus \cO_{\P^1 \times \P^1}(1,1) \right)$. There is an ample line bundle $L$ on $X$ such that for the central charge $Z = (L, \rho, U)$ of the dHYM equation, $\cT_X$ is $Z$-positive.
\end{proposition}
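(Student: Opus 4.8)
The plan is to use Corollary~\ref{cor:intro} to reduce $Z$-positivity of $\cT_X$ to a \emph{finite} list of explicit numerical inequalities, one for each torus orbit closure of $X$, and then to exhibit an ample class $L$ for which all of them hold. Recall first that $Z$-positivity of $\cT_X$ for the dHYM central charge $Z=(L,\rho,U)$ is the special case of $P_\delta$-positivity attached to the polynomial $P_{(Z,\cT_X)}$ with coefficients as in~\eqref{eq:polynomialcoeffs_Z} (the normalisation $P_X(\cT_X)=0$ being automatic, since it reads $\Im|Z_X(\cT_X)|^2=0$). Since $X$ is a smooth projective toric variety, Corollary~\ref{cor:intro} applies, so $\cT_X$ is $Z$-positive if and only if $\Im\bigl(\overline{Z_X(\cT_X)}\,Z_V(\cT_X)\bigr)>0$ for every torus orbit closure $V\subset X$ with $0<\dim V<3$.

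Next I would make all of this combinatorially explicit. Fix the fan $\Sigma$ of $X=\P(\cO_{\P^1\times\P^1}\oplus\cO_{\P^1\times\P^1}(1,1))$: it has six rays, whose associated toric prime divisors $D_1,\dots,D_6$ are the $2$-dimensional orbit closures, while the $1$-dimensional orbit closures are the torus-invariant curves $D_i\cap D_j$ attached to the walls of $\Sigma$. By Example~\ref{ex:chern-of-tangent}, $\ch(\cT_X)=3+\sum_\alpha D_\alpha+\tfrac12\sum_\alpha D_\alpha^2+\tfrac16\sum_\alpha D_\alpha^3$ with $\sum_\alpha D_\alpha=-K_X$, and the Chow ring of $X$ is presented by the Stanley--Reisner ideal and the linear relations among the rays (c.f.\ \cite[Chapter 12]{CLS}); so the Chern classes of $\cT_X$ and the intersection numbers of the $D_\alpha$ are all explicit. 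Writing $L=L(a_1,a_2,a_3)$ for a class moving in the (three-dimensional) ample cone of $X$, and using that for the dHYM central charge $Z_V(\cT_X)=\int_V e^{-\I c_1(L)}\ch(\cT_X)$ up to the standard normalisation, each $Z_V(\cT_X)$ becomes an explicit polynomial in $(a_1,a_2,a_3)$ with coefficients in $\RR+\I\RR$, computed by purely toric intersection theory.

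The inequalities to verify are then $\Im\bigl(\overline{Z_X(\cT_X)}\,Z_V(\cT_X)\bigr)>0$ over the six divisors and the twelve invariant curves. The $\mathbb{Z}/2$-symmetry of $X$ exchanging the two $\P^1$-factors of the base (and the natural symmetry of the projectivisation) identifies many of these, leaving only a handful of genuinely distinct conditions, each an explicit polynomial inequality in $(a_1,a_2,a_3)$. I would then choose a convenient point of the ample cone --- for instance a suitable multiple of the anticanonical class $-K_X$, or the three parameters tuned so that the phase of $Z_X(\cT_X)$ falls in the range (an analogue of the supercritical phase of the dHYM equation) where $Z$-positivity is the natural obstruction --- and check directly that the finitely many inequalities are satisfied. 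As in the proof of Proposition~\ref{prop:introhirzebruchdHYM}, this final verification is a bounded, explicit computation that can be carried out by hand or with computer algebra.

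The main obstacle is precisely this last step: there is no \emph{a priori} reason that $\cT_X$ is $Z$-positive for every polarisation, so one genuinely has to search the ample cone for a working $L$. Heuristically, scaling $L$ by a large factor pushes the eigenvalues of the curvature of the natural toric metric far into the region where the linearised operator is positive and the phase is supercritical --- exactly the regime where an equivariant, (slope) stable bundle on a Fano is expected to be $Z$-positive --- but making this quantitative, i.e.\ determining how large the scaling must be and confirming each orbit-closure inequality, is the technical heart of the argument. The preliminary combinatorial set-up (the fan, the intersection ring, hence $\ch(\cT_X)$ and all the $Z_V(\cT_X)$) is routine but must be handled with care.
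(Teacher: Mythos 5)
Your strategy is exactly the paper's: invoke Corollary~\ref{cor:intro} to reduce $Z$-positivity to the finitely many torus orbit closures, compute $\ch(\cT_X)$ from Example~\ref{ex:chern-of-tangent} and the toric intersection ring, and then verify the resulting inequalities for a well-chosen ample $L$ --- and your candidate ($-K_X$) is precisely the polarisation the paper uses. The one thing you have not done is the thing the existence statement actually requires: you defer the ``technical heart'' of checking the inequalities, so as written no specific $L$ is ever verified. In the paper this step is short: writing every invariant prime divisor as a non-negative combination of $D_1,D_2,D_5$ (using $D_4\sim D_2$, $D_6\sim D_5$, $D_3\sim D_1+D_2+D_5$), one reduces the $6+12$ orbit closures to the six cycles $D_1,D_2,D_5,V_{12},V_{15},V_{25}$ (and by symmetry to four genuinely distinct ones), computes $Z_X(\cT_X)=\tfrac{76}{3}-24\I$ and the corresponding $Z_V(\cT_X)$, and checks each $\Im\bigl(Z_V(\cT_X)\overline{Z_X(\cT_X)}\bigr)>0$ directly. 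No large-volume or phase heuristic is needed once $L=-K_X$ is fixed; to complete your proof you would simply have to carry out this finite computation.
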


\subsubsection{Hirzebruch surfaces}
\label{sec:CanHirz}

 Assume that $X$ is a smooth projective toric surface. Then, for any toric sheaf $\cE$ over $X$, we have
\begin{equation}\label{eq:Zslopestoricsurace}
\begin{array}{ccc}
    Z_X(\cE) & = & \rank(\cE)(\rho_0U_2+\rho_1U_1\cdot L+\rho_2 L^2)\\
    & & +(\rho_0 U_1+\rho_1 L)\cdot \mathrm{ch}_1(\cE)+\rho_0 \mathrm{ch}_2(\cE).
\end{array}
\end{equation}
For any invariant curve $V\subset X$ we have
\begin{equation}
 \label{eq:Zslopesrestricedsurface}
Z_V(\cE)=\rho_1 \rank(\cE)(L\cdot V )+\rho_0\rank(\cE)( U_1\cdot V ) + \rho_0c_1(\cE)\cdot V.
\end{equation}
Recall the central charge of the dHYM equation:
\begin{equation}
 \label{eq:centralchargedHYM}
Z_X(\cE)=-\int_X e^{-iL}\cdot \mathrm{ch}(\cE). 
\end{equation}
Using equation~\eqref{eq:polynomialchargebundles}, for this central charge on surfaces we have
$$
U = 1 \quad \text{and} \quad (\rho_0, \rho_1, \rho_2) = (-1, i, 1/2).
$$
Let $r\in\N$ and consider the $r$-th Hirzebruch surface
$$X = \P(\cO_{\P^1} \oplus \cO_{\P^1}(r))$$
with the projection map $\pi\colon X \rightarrow \P^1$.
We set $F= \pi^* \cO_{\P^1}(1)$ and $H= \cO_X(1)$.
The rays of the fan of $X$ are the half lines generated by the vectors
$u_1 = e_1$, $u_2 = e_2$, $u_3 = -e_1 + r \, e_2$ and $u_0 = -e_2$ where $(e_1, e_2)$ is
the standard basis of $\Z^2$.
%
%
Let $D_i$ be the divisor associated to the ray $\Cone(u_i)$, we have
$$
D_1 \sim_{\rm lin} D_3 \sim_{\rm lin} F, \quad D_0 \sim_{\rm lin} H \quad
\text{and} \quad D_2 \sim_{\rm lin} H - rF.
$$
By \cite[Lemma 15.1.8]{CLS}, the effective cone of $X$ is generated by $F$ and $H-rF$.
We have the following intersection products (c.f.\ \cite[Example 6.4.6]{CLS}):
$$
F^2 = 0, \quad F \cdot H = 1 \quad \text{and} \quad H^2 = r.
$$
As the anti-canonical divisor of $X$ is 
$$
-K_X = D_0 + D_1 + D_2 + D_3 \sim_{\rm lin} (2-r)F + 2H,
$$
we get the following Chern character
$$
\ch(\cO_X(-K_X)) = 1 +(2-r)F + 2H + 4 F \cdot H.
$$
According to \cite[Proposition 4.2.1]{DDK} a divisor $L=aF + bH$ of $X$ is
ample (resp. nef) if and only if $a,b >0$ (resp. $a, b \geq 0$).

\begin{proof}[Proof of Proposition~\ref{prop:introhirzebruchdHYM}]
We set $\cE = \cO_X(-K_X)$ and let $L=a F + b H$ be an ample line bundle where $a,b \in \N^*$.
With~\eqref{eq:Zslopestoricsurace}, we obtain
$$
Z_X(\cE) = \left(ab + \frac{rb^2}{2} -4 \right) + i (2a + 2b + br),
$$
and in particular $\Im(Z_X(\cE)) > 0$. To get the $Z$-positivity of $\cE$ it suffices to compute
$\Im\left(\frac{Z_V(\cE)}{Z_X(\cE)}\right)$ for $V = F$ and $V=H-rF$.
By~\eqref{eq:Zslopesrestricedsurface}, we have $Z_F(\cE)  =   -2 + i b$
and $Z_{H-rF}(\cE) = -(2-r) + ia$.
Then we compute
\begin{align*}
\Im( Z_F(\cE) \overline{Z_X(\cE)}) & = 4a + a b^2 + \left( 2b + \frac{b^3}{2} \right)r
\end{align*}
and
\begin{align*}
\Im( Z_{H-rF}(\cE) \overline{Z_X(\cE)}) & =  -2ar + (a^2 - r^2 + 4)b + \dfrac{ar}{2} b^2.
\end{align*}
If $r=0$, then $\cE$ is $Z$-positive with respect to any $L$. For $r \geq 1$, we find that the
line bundle $\cE$ is $Z$-positive if and only if
\begin{equation}
    b > \sqrt{\frac{(a^2 - r^2 + 4)^2}{a^2 r^2} + 4} - \frac{a^2 - r^2 + 4}{ar}
\end{equation}
that can be satisfied for suitable $(a,b)$.

\smallskip

We then consider the tangent bundle. According to Example~\ref{ex:chern-of-tangent}, we have
$$
\ch(\cT_X) = 2 + (2-r)F + 2H.
$$
Let $L= aF + b H$ be an ample divisor. By~\eqref{eq:Zslopestoricsurace} We have
\begin{align*}
Z_X(\cT_X) & = (aF + bH)^2 + i (aF + bH) \cdot ((2-r)F + 2H)
\\ & = (2ab + r b^2) + i(2a+2b+br).
\end{align*}
On the other hand by~\eqref{eq:Zslopesrestricedsurface}, we have
\begin{align*}
Z_F(\cT_X) & = 2i(aF + bH)\cdot F - ((2-r)F + 2H) \cdot F\\
 & = -2 + 2i b
\end{align*}
and
\begin{align*}
Z_{H-rF}(\cT_X) & = 2i(aF + bH)\cdot (H -rF) - ((2-r)F + 2H) \cdot (H- rF)\\
 & = -(2-r) + 2ia.
\end{align*}
Then:
\begin{align*}
\Im( Z_F(\cT_X) \overline{Z_X(\cT_X)}) & = 2(2a+2b+br) + 2b(2ab + rb^2)
\\ & = 4(a + b + ab^2) + (2b + 2b^3)r
\end{align*}
and
\begin{align*}
\Im( Z_{H-rF}(\cT_X) \overline{Z_X(\cT_X)}) & = (2-r)(2a+2b+br) + 2a(2ab + rb^2)
\\ & = (4-2r)a + (4a^2 + 4 - r^2)b + 2arb^2.
\end{align*}
If $r \in \{0,1,2\}$, then $\cT_X$ is $Z$-positive with respect to any $L$.
For $r \geq 3$, the tangent bundle $\cT_X$ is $Z$-positive if and only if
$$
b > \dfrac{1}{2} \sqrt{\dfrac{(4a^2 + 4 - r^2)^2}{(2ar)^2} + \dfrac{4(r-2)}{r}}
- \dfrac{(4a^2 + 4 - r^2)}{4 ar}
$$
and we obtain the result for suitable choices of $(a,b)$.
\end{proof}

\subsubsection{A toric Fano \texorpdfstring{$3$}{3}-fold}
We conclude this section by proving Proposition~\ref{prop:introFano3fold}.
Let $\Sigma$ be a complete fan in $\R^3$ (c.f.\  Figure~\ref{fig:3fold}) with rays generated by
\begin{align*}
u_1 = (1,0,0) & & u_2= (0,1,0) & & u_3 = (-1,0,0) \\
u_4 = (1,-1,0) & & u_5 = (0,0,1) & & u_6= (1,0,-1).
\end{align*}
By \cite[Section 3]{Watanabe82}, the toric variety associated to the fan $\Sigma$ is
$$X = \P \left( \cO_{\P^1 \times \P^1} \oplus \cO_{\P^1 \times \P^1}(1,1) \right).$$
We denote by $D_i$ the divisor associated to the ray $\Cone(u_i)$. We have:
$$
D_4 \sim_{\rm lin} D_2 \qquad D_6 \sim_{\rm lin} D_5 \qquad
D_1 \sim_{\rm lin} D_3 - D_2 - D_5
$$
and 
$$-K_X \sim 2 D_3 + D_2 + D_5.$$

\begin{figure}
\begin{tikzpicture}[scale=0.8]
\draw[dashed] (-4.5,-2) -- (-1.5,2) -- (4.5,2) -- (1.5,-2) -- (-4.5,-2);
\draw[dashed] (-1.5,-5) -- (-1.5,1) -- (1.5,5) -- (1.5,-1) -- (-1.5,-5);

\draw[line width=0.8mm] (0,0) -- (0,3);
\draw[line width=0.8mm] (0,0) -- (3,0);
\draw[line width=0.8mm] (0,0) -- (-1.5,-2);
\draw[line width=0.8mm] (0,0) -- (-4.5,-2);
\draw[line width=0.8mm] (0,0) -- (-1.5,-5);
\draw[line width=0.8mm] (0,0) -- (1.5,2);

\draw[dashed] (0,0) -- (-3,0);
\draw[dashed] (0,0) -- (0,-3);

\draw (-1.5,-2) node{$\bullet$} node[below left]{$u_1$};
\draw (3,0) node{$\bullet$} node[right]{$u_2$};
\draw (1.5,2) node{$\bullet$} node[above right]{$u_3$};
\draw (-4.5,-2) node{$\bullet$} node[below]{$u_4$};
\draw (0,3) node{$\bullet$} node[above left]{$u_5$};
\draw (-1.5,-5) node{$\bullet$} node[below]{$u_6$};
\end{tikzpicture}

\caption{Fan in $\R^3$}
\label{fig:3fold}
\end{figure}

\begin{lemma}
We have the following intersections in the Chow ring:
$$
D_{1}^2 = -D_1 \cdot D_2 - D_1 \cdot D_5,
\qquad D_{3}^2 = D_1 \cdot D_2 + D_1 \cdot D_5 + 2 D_2 \cdot D_5,
$$
and
$$
D_{2}^2 = D_{5}^2 = 0, \qquad D_{1}^3 = D_{3}^3 = 2 D_1 \cdot D_2 \cdot D_5.
$$
\end{lemma}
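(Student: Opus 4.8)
The plan is to compute everything directly inside the Chow ring $A^\bullet(X)$, which for the smooth complete toric variety $X$ is the quotient of $\Z[D_1,\dots,D_6]$ by the ideal of linear equivalences and the Stanley--Reisner ideal. The linear equivalences are the ones recorded just above: $D_4\sim_{\rm lin}D_2$, $D_6\sim_{\rm lin}D_5$, and $D_1\sim_{\rm lin}D_3-D_2-D_5$, equivalently $D_3\sim_{\rm lin}D_1+D_2+D_5$. So the only genuinely geometric input needed is a handful of Stanley--Reisner vanishings, of the form $D_i\cdot D_j=0$ when $u_i$ and $u_j$ do not lie in a common cone of $\Sigma$.

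First I would record the three vanishings $D_1\cdot D_3=D_2\cdot D_4=D_5\cdot D_6=0$. Indeed $u_1=-u_3$, so $\R_{\ge0}u_1$ and $\R_{\ge0}u_3$ cannot both be faces of a single strongly convex cone; and $u_2+u_4=u_5+u_6=u_1$, so if $\{u_2,u_4\}$ (resp. $\{u_5,u_6\}$) spanned a two-dimensional cone of $\Sigma$, the ray $\R_{\ge0}u_1$ would meet its relative interior, which is incompatible with $\Sigma$ being a fan. Combined with $D_2\sim_{\rm lin}D_4$ and $D_5\sim_{\rm lin}D_6$ this already gives $D_2^2=D_2\cdot D_4=0$ and $D_5^2=D_5\cdot D_6=0$.

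Everything else is then formal. Substituting $D_3=D_1+D_2+D_5$ into $D_1\cdot D_3=0$ gives $D_1^2=D_1\cdot(D_3-D_2-D_5)=-D_1\cdot D_2-D_1\cdot D_5$. Squaring $D_3=D_1+D_2+D_5$ and using this together with $D_2^2=D_5^2=0$ yields $D_3^2=-D_1D_2-D_1D_5+2D_1D_2+2D_1D_5+2D_2D_5=D_1\cdot D_2+D_1\cdot D_5+2\,D_2\cdot D_5$. For the degree-three part, multiplying $D_1^2=-D_1D_2-D_1D_5$ by $D_1$ and using $D_1^2\cdot D_2=D_1^2\cdot D_5=-D_1D_2D_5$ (which follow from $D_2^2=D_5^2=0$) gives $D_1^3=2\,D_1\cdot D_2\cdot D_5$; and expanding $D_3^3=(D_1+D_2+D_5)\cdot(D_1D_2+D_1D_5+2D_2D_5)$, every monomial containing a factor $D_2^2$ or $D_5^2$ drops out and a short bookkeeping of the remaining five terms again leaves $2\,D_1\cdot D_2\cdot D_5$. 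One may additionally observe that $u_1,u_2,u_5$ form a $\Z$-basis of $N$, so $\langle u_1,u_2,u_5\rangle$ is a smooth maximal cone and $D_1\cdot D_2\cdot D_5$ is the class of a point, but this is not needed for the stated identities.

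The only point that requires a moment's care is the second paragraph — correctly reading off from the ray data which pairs fail to span a cone of $\Sigma$; after that the lemma is pure manipulation in the quotient ring, so I do not anticipate any real obstacle.
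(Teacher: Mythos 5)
Your proposal is correct and follows essentially the same route as the paper: establish the Stanley--Reisner vanishings $D_1\cdot D_3=D_2\cdot D_4=D_5\cdot D_6=0$, combine them with the linear equivalences $D_2\sim D_4$, $D_5\sim D_6$, $D_1\sim D_3-D_2-D_5$, and then expand. The paper merely states ``the other equalities follow from the linear equivalences'' where you carry out the degree-two and degree-three expansions explicitly (and you also justify the non-cone conditions from the ray data rather than reading them off the figure), but the argument is the same.
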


\begin{proof}
As $D_2 \sim_{\rm lin} D_4$ and $\Cone(u_2, u_4) \notin \Sigma$, we deduce that 
$$D_{2}^2=0.$$
We obtain similarly
$$D_{5}^2 = D_5 \cdot D_6 = 0.
$$
For the divisor $D_1$, we have
\begin{align*}
D_{1}^2  &= D_1 \cdot (D_3 - D_2 - D_5) \\
 & = - D_1 \cdot D_2 - D_1 \cdot D_5.
\end{align*}
The other equalities follow from the linear equivalences of the divisors.
\end{proof}

\begin{lemma}
We have
$$
(-K_X)^2 \cdot D_1 = 2, \qquad (-K_X)^2 \cdot D_2 = 8, \qquad (-K_X)^2 \cdot D_5 = 8.
$$
\end{lemma}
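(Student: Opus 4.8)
The plan is to reduce the whole computation to the single normalisation constant $D_1 \cdot D_2 \cdot D_5 = 1$ together with the intersection relations of the preceding lemma. First I would pin down that constant: the cone $\Cone(u_1, u_2, u_5)$ is one of the maximal cones of $\Sigma$ (it is the join of the base cone $\Cone(u_1, u_2)$ with the ray $\Cone(u_5)$), and its primitive generators $u_1 = (1,0,0)$, $u_2 = (0,1,0)$, $u_5 = (0,0,1)$ form a $\ZZ$-basis of $\ZZ^3$; hence that cone is smooth and $D_1 \cdot D_2 \cdot D_5 = 1$. Feeding this into the preceding lemma yields $D_1^3 = D_3^3 = 2$, and, using $D_1^2 = -D_1 \cdot D_2 - D_1 \cdot D_5$ together with $D_2^2 = D_5^2 = 0$, also $D_1^2 \cdot D_2 = D_1^2 \cdot D_5 = -1$ and $D_1 \cdot D_2^2 = D_1 \cdot D_5^2 = 0$.

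Next I would expand the square of the anticanonical class. Since $-K_X \sim 2D_3 + D_2 + D_5$ and $D_2^2 = D_5^2 = 0$, one has
\[
(-K_X)^2 = 4 D_3^2 + 4\, D_2 \cdot D_3 + 4\, D_3 \cdot D_5 + 2\, D_2 \cdot D_5 .
\]
Intersecting with $D_1$, $D_2$, $D_5$ in turn, I would systematically substitute $D_3 \sim D_1 + D_2 + D_5$ and reduce with the relations above. For $D_1$ this gives $D_1 \cdot D_3^2 = D_1^3 + 2 D_1^2 D_2 + 2 D_1^2 D_5 + 2 D_1 D_2 D_5 = 0$ (the terms $D_1 D_2^2$, $D_1 D_5^2$ vanishing), and likewise $D_1 \cdot D_2 \cdot D_3 = D_1 \cdot D_3 \cdot D_5 = 0$, while $D_1 \cdot D_2 \cdot D_5 = 1$, so $(-K_X)^2 \cdot D_1 = 2$. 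For $D_2$, the terms involving $D_2^2$ vanish and $D_2 \cdot D_3^2 = D_2 \cdot D_3 \cdot D_5 = 1$, whence $(-K_X)^2 \cdot D_2 = 4 + 4 = 8$; the computation for $D_5$ is identical after exchanging the roles of $D_2$ and $D_5$, giving $(-K_X)^2 \cdot D_5 = 8$.

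There is no real obstacle here: once the normalisation $D_1 \cdot D_2 \cdot D_5 = 1$ is identified, the statement is pure bookkeeping in the Chow ring of $X$. The only place demanding a little care is the reduction of the various triple products to multiples of $D_1 \cdot D_2 \cdot D_5$: one must eliminate $D_3$ via $D_3 \sim D_1 + D_2 + D_5$, then $D_1^2$ via $D_1^2 = -D_1 \cdot D_2 - D_1 \cdot D_5$, and finally use $D_2^2 = D_5^2 = 0$, making sure that no monomial is dropped or counted twice. As a consistency check one may also verify $(-K_X)^3 = 52$, the known anticanonical degree of the Fano $3$-fold $\PP(\cO \oplus \cO(1,1))$ over $\PP^1 \times \PP^1$.
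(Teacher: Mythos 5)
Your proof is correct and follows essentially the same route as the paper's: both reduce $(-K_X)^2 \cdot D_i$ to the normalisation $D_1 \cdot D_2 \cdot D_5 = 1$ via the relations of the preceding lemma, the only difference being whether $D_3$ is eliminated before squaring (the paper rewrites $-K_X \sim 2D_1 + 3D_2 + 3D_5$ first) or afterwards in the individual triple products. Your explicit justification of $D_1 \cdot D_2 \cdot D_5 = 1$ from the smoothness of the maximal cone $\Cone(u_1,u_2,u_5)$, and the consistency check $(-K_X)^3 = 52$, are welcome additions the paper leaves implicit.
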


\begin{proof}
Writing $-K_X \sim_{\rm lin} 3D_2 + 3D_5 + 2 D_1$, we obtain
$$
(-K_X)^2 = 18 D_2 \cdot D_5 + 8 D_2 \cdot D_1 + 8 D_1 \cdot D_5.
$$
Using $D_{2}^2=0$ and $D_{5}^2 = 0$, we get the second and the third equalities. As
$$D_{1}^2 = -D_1 \cdot D_2 - D_1 \cdot D_5,$$ we have
$$
D_2 \cdot D_{1}^2 = -1 \quad \text{and} \quad D_5 \cdot D_{1}^2 = -1.
$$
Hence, we have $(-K_X)^2 \cdot D_1 = 2$.
\end{proof}

We can now give the proof of Proposition~\ref{prop:introFano3fold}.

\begin{proof}[Proof of Proposition~\ref{prop:introFano3fold}]
By Example~\ref{ex:chern-of-tangent}, we have
$$
\ch(\cT_X) = 3 - K_X + D_2 \cdot D_5 + \dfrac{2}{3}D_1 \cdot D_2 \cdot D_5
$$
and
\begin{align*}
-e^{i K_X} \cdot \ch(\cT_X) = &
-\left(1 + i K_X - \dfrac{1}{2} K_{X}^2 - \dfrac{i}{6} K_{X}^3 \right) \cdot \ch(\cT_X)
\\ = &
-3 + (1 - 3i)K_X + \left(\dfrac{3}{2} K_{X}^2 - D_2 \cdot D_5 + i K_{X}^2 \right)
\\ & -
\left( \dfrac{2}{3} D_1 \cdot D_2 \cdot D_5 + \dfrac{1}{2} K_{X}^3 + i K_X \cdot D_2 \cdot D_5 - \dfrac{i}{2} K_{X}^3 \right).
\end{align*}
For $\lbrace i,j\rbrace\in \lbrace 1,\ldots, 6\rbrace^2$ with $i \neq j$, we set
$$
V_{ij} = D_i \cdot D_j.
$$
As any reduced invariant divisor $D$ is linearly equivalent to a non-negative combination
of $D_1$, $D_2$ and $D_5$, we deduce that for any $i,j \in \{1, \ldots,6\}$ with $i \neq j$, $V_{ij}$ can be written as a non-negative combination of $V_{12}$, $V_{15}, V_{25}$.
Therefore, to check the $Z$-positivity, it is enough to compute
$\Im\left( Z_V(\cT_X) \overline{Z_X(\cT_X)} \right)$ for
$$
V \in \{D_1, D_2, D_5, V_{12}, V_{15}, V_{25}\}.
$$
We then derive the following: 
\begin{align*}
Z_X(\cT_X) & = \frac{76}{3} - 24i,
\\
Z_{D_1}(\cT_X) & = \left(\dfrac{3}{2} K_{X}^2 - D_2 \cdot D_5 + i K_{X}^2 \right) \cdot D_1
= 2 + 2i,
\\
Z_{D_2}(\cT_X) & = \left(\dfrac{3}{2} K_{X}^2 - D_2 \cdot D_5 + i K_{X}^2 \right) \cdot D_2
= 12 + 8i,
\\
Z_{V_{12}}(\cT_X) & = -(1-3i)(-K_X \cdot D_1 \cdot D_2)= -1 + 3i,
\\
Z_{V_{25}}(\cT_X) &= -(1-3i)(-K_X \cdot D_2 \cdot D_5)= 2(-1 + 3i).
\end{align*}
According to the geometry of the fan, one has $$Z_{D_2}(\cT_X) = Z_{D_5}(\cT_X)$$ and
$$Z_{V_{12}}(\cT_X)= Z_{V_{15}}(\cT_X).$$ As for any $V \in \{D_1, D_2, D_5, V_{12}, V_{15}, V_{25}\}$,
$$
\Im\left(Z_V(\cT_X) \overline{Z_X(\cT_X)} \right) > 0,
$$
we deduce that $\cT_X$ is $Z$-positive.
\end{proof}

\subsection{A slope unstable asymptotically $P$-stable bundle}
\label{sec:application asymptotic case}
In  \cite[Example $3.5$]{KScarpa}, examples of $P$-stable bundles that are not slope polystable were provided on surfaces. In this section, we provide an example of a $P$-stable tangent bundle that is not slope polystable on a $3$-fold. As a corollary of Theorem~\ref{thm:DMS_polynomial}, this bundle will carry solutions to the $P$-critical equations in asymptotic regimes, for a polynomial $P$ coming from the $\alpha$-stability notion introduced in \cite{Toma_semistab_ampleclasses} (c.f.\ Section~\ref{sec:asymptotic Toma et al}).
The proof is an application of Theorem~\ref{theo:intro equiv Zstab asymptotic}, noting that such a polynomial is adapted to torsion-free sheaves (c.f.\ \cite[Example 2.4]{Delloque_adapted}).

Let $X= \P \left( \cO_{\P^2} \oplus \cO_{\P^2}(1) \right)$ with the projection map
$\pi\colon X \rightarrow \P^2$. We set $H = \cO_X(1)$ and $F= \pi^* \cO_{\P^2}(1)$.
The rays of the fan of $X$ are the half lines generated by the vectors
$u_1 = e_1$, $u_2 = e_2$, $u_3 = e_3$, $u_4 = -e_3$ and $u_0 = e_3 -e_1 -e_2$
where $(e_1, e_2, e_3)$ is the standard basis of $\Z^3$.
If $D_i$ be the divisor associated to the ray $\Cone(u_i)$, we have
$$
D_0 \sim_{\rm lin} D_1 \sim_{\rm lin} D_2 \sim_{\rm lin} F, \quad D_4 \sim_{\rm lin} H
\quad \text{and} \quad
D_3 \sim_{\rm lin} H - F.
$$
We also have the following intersection products:
$$
H^3 = H^2 \cdot F = H \cdot F^2 = 1 \quad \text{and} \quad F^3 = 0.
$$
The Chern character of the tangent bundle is
$$
\ch(\cT_X) = 3 + 2(F+H) + 2F^2 - HF + H^2
$$
and the Todd Class of $X$ is given by
$$
\Todd_X = 1 + (F+H) + \dfrac{1}{24}(10 F^2 + 25 FH + 11 H^2) + \dfrac{13}{12} HF^2.
$$
According to \cite[Theorem 1.4]{HNS}, the tangent bundle $\cT_X$ is not stable with
respect to $L= H + F$. Applying Lemma~\ref{lem:sat subsheaves equiv case} and Examples~\ref{ex:tangent} and~\ref{ex:line bundles}, we find that the saturated equivariant subsheaves involved in the study of
stability of $\cT_X$ are $\cO_X(D_0)$, $\cO_X(D_1)$, $\cO_X(D_2)$, $\cO_X(D_3 + D_4)$ and the
reflexive sheaves of rank $2$ obtained as the direct sums of the previous.

We now study asymptotically $P$-stability of $\cT_X$ using the
$P$-condition given in equation~\eqref{eq:Pcondition-Toma}. We assume that
$$
(\alpha_0, \alpha_1, \alpha_2, \alpha_3) = ( [X], L, H F, H^2 F).
$$
We set set $\cF= \cO_X(F)$ and $\cG= \cO_X(2H-F)$. We have
$$
\ch(\cF) = 1 + F + \dfrac{1}{2} F^2 \quad \text{and} \quad
\ch(\cG) = 1 + (2H-F) + H^2 - HF + \dfrac{1}{2} F^2.
$$
Therefore,
\begin{align*}
P_{\alpha}(\cE, m) &= \dfrac{1}{2} m^3 + \dfrac{10}{2}m^2 + ...
\\
P_{\alpha}(\cF, m) &= \dfrac{1}{6} m^3 + \dfrac{3}{2} m^2 + ...
\\
P_{\alpha}(\cG, m) &= \dfrac{1}{6} m^3 + \dfrac{3}{2} m^2 + ...
\end{align*}
For large $m$, we have $P_{\alpha}(\cF, m) \leq \frac{1}{3} P_{\alpha}(\cE, m)$ and
$P_{\alpha}(\cG, m) \leq \frac{1}{3} P_{\alpha}(\cE, m)$. Thus $\cE$ is asymptotically
$P$-stable.

\bibliographystyle{amsplain}
\bibliography{Ztoric}

\end{document}